\newtheorem{theorem}{Theorem}[section]
\newtheorem{lemma}[theorem]{Lemma}
\newtheorem{proposition}[theorem]{Proposition}
\newtheorem{corollary}[theorem]{Corollary}
\theoremstyle{remark}
\newtheorem{remark}[theorem]{Remark}
\newtheorem{example}[theorem]{Example}
\newtheorem{definition}[theorem]{Definition}
\def\N{\mathbb N}
\def\Z{\mathbb Z}
\def\R{\mathbb R}
\def\Q{\mathbb Q}
\def\A{\mathcal A}
\def\B{\mathcal B}
\def\a{\mathfrak A}
\def\b{\mathfrak B}
\def\c{\mathfrak C}
\def\d{\mathfrak D}
\def\pfz{\begin{proof}}
\def\pfk{\end{proof}}
\def\pfkNoQed{\end{proof}}
\numberwithin{theorem}{section}
\newcommand{\bA}{\begin{gathered}\boxed{\:\begin{gathered}0 \\ 0\end{gathered}\:}\\A\end{gathered}}
\newcommand{\bC}{\begin{gathered}\boxed{\:\begin{gathered}1 \\ 1\end{gathered}\:}\\C\end{gathered}}
\newcommand{\bAC}{\begin{gathered}\boxed{\:\begin{gathered}0\ 1 \\ 1\ 0\end{gathered}\:}\\B\end{gathered}}
\begin{document}
\pagestyle{plain}

\title{Exchange of three intervals: itineraries, substitutions and palindromicity}
\author{Zuzana Mas\'akov\'a \and Edita Pelantov\'a \and \v St\v ep\'an Starosta}
\maketitle              

\begin{abstract}
Given a symmetric exchange of three intervals, we provide a detailed description of the return times to a subinterval and the corresponding itineraries.
We apply our results to morphisms fixing words coding non-degenerate three interval exchange transformation. This allows us to prove that the conjecture stated by Hof, Knill and Simon is valid for such infinite words.
\end{abstract}

\section{Introduction}

Interval exchange transformations have been extensively studied since the works on their ergodic aspects by Sinai~\cite{Sinai}, Keane~\cite{Keane}, Veech~\cite{Veech2}, Rauzy~\cite{Rauzy}, and others. For an overview, see~\cite{Veech} and references therein.
Among general dynamical systems, interval exchanges have the interesting property that the Poincar\'e first return map  is again a mapping of the same type, i.e.\ an exchange of (possibly different number of) intervals.
Rauzy~\cite{Rauzy2} used this fact to present a generalization of the classical continued fractions expansion.

It is commonly known that interval exchange transformations provide a very useful framework for the study of infinite words arising by coding of rotations, in particular Sturmian words. These are usually defined as aperiodic infinite words with lowest factor complexity. Equivalently, one obtains Sturmian words by binary coding of the trajectory under exchange $T$ of two intervals $[0,\alpha)$, $[\alpha,1)$ with $\alpha$ irrational. Given a subinterval $I\subset[0,1)$, the first return map $T_I$ to $I$ is an exchange of at most three intervals, although the return itineraries of points can take up to four values. The set of these itineraries can be used to describe certain characteristics of Sturmian words, namely the return words, see~\cite{VuillonSturm}, or abelian return words~\cite{RiSaVa}, and invariance under morphisms~\cite{Yasutomi}. 

Infinite words coding exchange of $k$-intervals, $k\geq 3$, are also in focus for several decades~\cite{FeHoZa,ferenczi} and, here too, one finds a close relation between their combinatorial features and the properties of the induced map, see for example~\cite{vuillon} for a result on return words or~\cite{Adamczewski} about substitutivity of interval exchange words.
A generalized version of the Poincar\'e first return map was used in~\cite{BlBrLaVu11} for description of palindromic complexity in codings of rotations.
These words are in intimate relation with three interval exchange words.

In this paper we focus on codings of a non-degenerate symmetric exchange $T:J\to J$ of three intervals. First we describe the  return times to a general interval $I\subset J$ and provide an insight on the structure of the set of $I$-itineraries. These results are given as Theorem~\ref{t:returntime} and then interpreted as analogues of the well known three gap and three distance theorems.

A particular attention is paid to the special cases when the set of $I$-itineraries has only three elements. These cases belong to the most interesting from the combinatorial point of view, since they provide information about return words to factors, and about the morphisms preserving three interval exchange words. For mutually conjugated morphisms, we describe in Theorem~\ref{t:interceptconjug} the relation between intercepts of their fixed points, as was done for Sturmian morphisms in~\cite{Peng}.
We also show that morphisms conjugated only to themselves do not have a non-degenerated fixed point.

The most important application of our results is a contribution to the solution of the question stated by Hof, Knill and Simon~\cite{HoKnSi} for palindromic words.
We refer to it as the HKS conjecture and adopt its reformulation by Tan~\cite{BoTan} who showed its validity for binary words. Labb\'e~\cite{La2013} presented a counterexample for the conjecture on ternary alphabet; the ternary word not satisfying the hypothesis turns out to be a degenerate three interval exchange word. In fact, degenerate three interval exchange words are just morphic images of binary, in fact Sturmian, words.
In this paper we show that for non-degenerated words coding exchange of three intervals the HKS conjecture holds. Let us mention that the latter result has been announced at the DLT conference~\cite{DLT}. Here we provide a full proof.

This paper is organized as follows.
Section~\ref{sec:prelim} contains the necessary notions from combinatorics on words.
Symmetric $k$-interval exchange transformations and their properties with respect to the first return map are treated in Section~\ref{sec:iet}.
Section~\ref{sec:3iet} focuses on specific properties when $k=3$. The main theorem about return times in three interval exchanges is given in
Section~\ref{sec:returntimes}.
In Section~\ref{sec:Gap} we put our results into context of three gaps and three distance theorems.

The specific case when the set of $I$-itineraries has only three elements is studied in Section~\ref{sec:3}. This allows us to describe the return words to palindromic bispecial factors.
In Section~\ref{sec:substitutions} we focus on substitution invariance of words coding interval exchange transformations.
The key lemma for the demonstration of our Theorem~\ref{thm:hks} on HKS conjecture requires some knowledge about
the relation of substitutions fixing words coding three interval exchange and Sturmian morphisms. This topic
is treated in Section~\ref{sec:ternarizace}. The proof of Theorem~\ref{thm:hks} is then provided in Section~\ref{sec:HKS} together with some other comments.


\section{Preliminaries}\label{sec:prelim}

Let us recall necessary notions and notation from combinatorics on words. For a basic overview we
refer to~\cite{Lo2}. An \textit{alphabet} is a finite set of symbols, called \textit{letters}.
A \textit{finite word} $w$ over an alphabet $\A$ of length $|w|=n$ is a concatenation $w=w_0\cdots w_{n-1}$
of letters $w_i\in\A$. The set of all finite words over $\A$ equipped with the operation of concatenation
and the empty word $\epsilon$ is a monoid denoted by $\A^*$. For a fixed letter $a\in\A$, the number of
occurrences of $a$ in $w$, i.e., the number of indices $i$ such that $w_i=a$, is denoted by $|w|_a$.
 The \textit{reversal} or \textit{mirror image} of the word $w$ is the word $\overline{w}=w_{n-1}\cdots w_0$.
  A word $w$ for which $w=\overline{w}$ is called a \textit{palindrome}.
An \textit{infinite word} ${\bf u}$ is an infinite concatenation ${\bf u}=u_0u_1u_2\ldots \in\A^\N$. An
infinite word ${\bf u}=wvvv\ldots$ with $w,v\in\A^*$ is said to be \textit{eventually periodic}; it is
said to be \textit{aperiodic} if it is not of such form. We say that $w\in\A^*$ is a \textit{factor} of
$v\in\A^*\cup\A^\N$ if $v=w'ww''$ for some $w'\in\A^*$ and $w''\in\A^*\cup\A^\N$. If $w'=\epsilon$ or
 $w''=\epsilon$, then $w$ is a \textit{prefix} or \textit{suffix} of $v$, respectively.
If $v=wu$, then we write $u=w^{-1}v$ and $w=vu^{-1}$.

The set ${\mathcal L}({\bf u})$ of all finite factors of an infinite word ${\bf u}$ is called the
\textit{language of ${\bf u}$}.   If for any  factor $w\in {\mathcal L}({\bf u})$
there exist at least two indices $i$ such that
$w$ is a prefix of the infinite word $u_iu_{i+1}u_{i+2}\cdots$, the word  ${\bf u}$ is {\it recurrent}.
Given a factor $w\in {\mathcal L}({\bf u})$, a finite word $v$ such that $vw$ belongs
  to ${\mathcal L}({\bf u})$ and the word $w$ occurs in $vw$ exactly twice, once as a prefix and once as a suffix of
   $vw$, is called a \textit{return word of $w$}. If any factor $w$ of an infinite recurrent word  ${\bf u}$ has only finitely many return words, the word ${\bf u}$
   is called {\it uniformly recurrent}.

The \textit{factor complexity} ${\mathcal C}_{\bf u}$ is the function $\N\to\N$ counting the number of factors of ${\bf u}$ of length $n$.
It is known that the factor complexity of an aperiodic infinite word ${\bf u}$ satisfies ${\mathcal C}_{\bf u}(n)\geq n+1$ for
all $n$. Aperiodic infinite words having the minimal complexity ${\mathcal C}_{\bf u}(n)= n+1$ for all $n$ are called
\textit{Sturmian words}. Since ${\mathcal C}_{\bf u}(1)=2$, they are binary words.
Sturmian words can be equivalently defined in many different frameworks, one of them is coding of an exchange of two
intervals.

Let $\A$ and $\B$ be alphabets. Let $\varphi:\A^*\to\B^*$ be a \textit{morphism}, i.e., $\varphi(wv)=\varphi(w)\varphi(v)$
 for all $w,v\in\A^*$. We say that $\varphi$ is \textit{non-erasing} if $\varphi(b)\neq \epsilon$ for every $b\in\A$.
 The action of $\varphi$ can be naturally extended to infinite words ${\bf u}\in\A^\N$ by setting
 $\varphi({\bf u})=\varphi(u_0)\varphi(u_1)\varphi(u_2)\ldots$. If $\A=\B$ and $\varphi({\bf u})={\bf u}$,
  then ${\bf u}$ is said to be a \textit{fixed point} of $\varphi$.
A non-erasing morphism $\varphi:\A^*\to\A^*$ such that there is a letter $a\in\A$ satisfying
$\varphi(a)=aw$ for some non-empty word $w$ is called a \textit{substitution}. Obviously, a substitution has
always a fixed point, namely $\lim_{n\to\infty}\varphi^n(a)$ where the limit
is taken over the product topology.
An infinite word which is a fixed point of a substitution is called a \textit{pure morphic} word.
Let $\A = \{a_1, \ldots, a_k\}$ and $\B = \{b_1, \ldots , b_\ell \}$.
One associates to every morphism $\varphi: \A \to \B$ its \textit{incidence matrix} $M_\varphi\in\N^{k\times \ell}$ defined by
\[
(M_\varphi)_{ij}=|\varphi(a_i)|_{b_j}\,,\quad \text{ for }1\leq i\leq k,\ 1\leq j\leq \ell.
\]
A morphism $\varphi:\A^*\to\A^*$ is said to be \textit{primitive} if all elements of some power of its
incidence matrix $M_\varphi\in\N^{k\times k}$ are positive.
A specific class of morphisms is formed by the so-called \textit{Sturmian morphisms} which are
 defined over the binary alphabet $\{0,1\}$ and for which there exists a Sturmian word ${\bf u}$ such that $\varphi({\bf u})$ is also Sturmian.
For an overview about properties of Sturmian morphisms see~\cite{Lo2}.

\section{Itineraries in symmetric exchange of intervals}\label{sec:iet}

For disjoint intervals $K$ and $K'$ we write $K<K'$ if for $x\in K$ and $x'\in K'$ we have $x<x'$.
Let $J$ be a semi-closed interval. Consider a partition $J=J_0\cup\dots\cup J_{k-1}$ of $J$ into a
disjoint union of semi-closed subintervals $J_0<J_1<\cdots <J_{k-1}$. A bijection $T:J\to J$ is
called an \textit{exchange of $k$ intervals with permutation $\pi$}
if there exist numbers $c_0,\dots,c_{k-1}$ such that for  $0\leq i<k$ one has
\begin{equation}\label{eq:obecIET}
T(x)=x+c_i \text{ for } x\in J_i\,,
\end{equation}
where $\pi$ is a permutation of $\{0,1,\dots,k-1\}$ such that $T(J_i)<T(J_j)$
for $\pi(i)<\pi(j)$. In other words, the permutation $\pi$ determines the order of intervals $T(J_i)$.
If $\pi$ is the permutation $i\mapsto k-i+1$, then $T$ is called a \textit{symmetric} interval exchange.

The orbit of a given point $\rho$ is the infinite sequence $\rho$,
$T(\rho)$, $T^2(\rho)$, $T^3(\rho)$, \dots.   It  can be coded by an
infinite word ${\bf u}_\rho=u_0u_1u_2\ldots$ over the alphabet $\{0,1,\dots,k-1\}$ given by
\[
u_n=X \quad \text{if } T^n({\rho}) \in J_X \quad \text{ for } X \in \{0,1,\dots,k-1\}.
\]
The point $\rho$ is called the \textit{intercept} of $\bf{u}_\rho$.
An exchange of intervals satisfies the \textit{minimality condition} if the orbit of any given $\rho\in[0,1)$ is
 dense in $J$.
In this case, the word ${\bf u}_\rho$ is aperiodic, uniformly recurrent, and the language of ${\bf u}_\rho$ depends
only on the parameters of the transformation $T$ and not on the intercept $\rho$ itself.
The complexity of an infinite word ${\bf{u}_\rho}$ is known to satisfy ${\mathcal C}_{\bf{u}_\rho}(n)\leq (k-1)n+1$
(see \cite{FeHoZa}). If for every $n\in\N$ we have ${\mathcal C}_{\bf{u}_\rho}(n)= (k-1)n+1$, then the transformation
 $T$ and the word $\bf{u}_\rho$ are said to be \textit{non-degenerate}. A sufficient and necessary condition on $T$ to
  be non-degenerate is that the orbits of the discontinuity points of $T$ are
  infinite and disjoint. This condition is known under the abbreviation i.d.o.c.

\begin{definition}
Let $T$ be an exchange of $k$ intervals satisfying the minimality condition.
Given a subinterval $I\subset J$, we define the mapping $r_I: I\to \Z^+=\{1,2,3,\dots\}$ by
\[
r_I(x)=\min \{n\in\Z^+ \colon T^n(x)\in I\}\,,
\]
the so-called \textit{return time} to $I$. The prefix of length $r_I(x)$ of the word
${\bf u}_x$ coding the orbit of a point $x\in I$
is called the \textit{$I$-itinerary} of $x$ and denoted $R_I(x)$. The set of all $I$-itineraries is
denoted by ${\rm It}_I=\{R_I(x) \colon x\in I\}$.
The mapping $T_I:I\to I$ defined by
\[
T_I(x) = T^{r_I(x)}(x)
\]
is said to be the \textit{first return map of $T$ to $I$}, or \textit{induced map of $T$ on $I$}.
\end{definition}

Throughout the paper, when it is clear from the context, we sometimes omit the index $I$ in $r_I$ or $R_I$.
It is known from Keane~\cite{Keane} that if $T$ is an exchange of $k$ intervals and $I\subset J$,
then $\mathit{It}_I$ has at most $k+2$ elements, and, consequently, $T_I$ is an exchange of at most $k+2$ intervals.

\begin{remark}\label{pozn:skakani}
Let $X\in\{0,1,\ldots, k-1\}$.
If $I\subset J_X$, then $T(I)$ is an interval and we have
\[
R \text{ is an $I$-itinerary }\ \Leftrightarrow \ X^{-1}RX \text{ is a $T(I)$-itinerary}.
\]
Similarly, if $I\subset T(J_X)$, then $T^{-1}(I)$ is an interval and we have
\[
R\text{ is an $I$-itinerary }\ \Leftrightarrow \ XRX^{-1} \text{ is a $T^{-1}(I)$-itinerary}.
\]
\end{remark}

We will use another fact about itineraries of an interval exchange. Without loss of generality,
we consider $J=[0,1)$.
The intervals $J_X$ are left-closed right-open for all $X\in\{0,1\dots,k-1\}$.
Such interval exchange $T$ is right-continuous.
Therefore, if $I=[\gamma,\delta)$, then every word $w\in{\rm It}_I=\{R(x) \colon x\in I\}$ is an
$I$-itinerary $R(x)$ for infinitely many $x\in I$, which form an interval, again left-closed right-open.

\begin{proposition}\label{p:spojitost}
Let $T$ be a $k$-interval exchange satisfying the minimality condition and let $I=[\gamma,\delta)\subset[0,1)$.
There exist neighbourhoods $H_\gamma$ and $H_\delta$ of $\gamma$ and $\delta$, respectively, such that
 for every $\tilde{\gamma}\in H_\gamma$ and $\tilde{\delta}\in H_\delta$ with $0\leq \tilde{\gamma}<\tilde{\delta}\leq 1$ one has
\[
\mathit{It}_{\tilde{I}} \supseteq \mathit{It}_{{I}}\,,\quad \text{where } \tilde{I}=[\tilde{\gamma},\tilde{\delta}).
\]
In particular, if $\#\mathit{It}_{I}=k+2$, then $\mathit{It}_{\tilde{I}} = \mathit{It}_{{I}}$.
\end{proposition}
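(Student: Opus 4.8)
The plan is to describe, for every itinerary $w\in\mathit{It}_I$, the set $S_w\subseteq I$ of points with $I$-itinerary $w$ as an explicit left-closed right-open interval whose two endpoints are a maximum, resp.\ a minimum, of finitely many affine functions of $\gamma,\delta$, and then to conclude by a continuity argument. Write $w=w_0\cdots w_{n-1}$. The starting point is that on $S_w$ the iterate $T^i$ is a genuine translation $x\mapsto x+s_i$, where $s_0=0$ and $s_{i+1}=s_i+c_{w_i}$, because the itinerary prescribes which branch $J_{w_i}$ contains $T^i(x)$ at each step $0\le i\le n-1$. Introducing $P_w=\bigcap_{i=0}^{n-1}(J_{w_i}-s_i)=[p_w,q_w)$, which depends only on $w$ and $T$ and contains $S_w$, one checks directly that $x$ has $I$-itinerary $w$ if and only if $x\in P_w\cap I$, $x+s_n\in I$, and $x+s_i\notin I$ for $1\le i\le n-1$.

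Since $S_w$ is, by the discussion preceding the statement, a nonempty connected left-closed right-open interval, for each $i\in\{1,\dots,n-1\}$ exactly one of $x+s_i<\gamma$, $x+s_i\ge\delta$ holds throughout $S_w$, which splits $\{1,\dots,n-1\}=L\sqcup R$; reading off the resulting intersection of intervals gives $S_w=[\alpha_w,\beta_w)$ with
\[
\alpha_w=\max\bigl(\{p_w,\gamma,\gamma-s_n\}\cup\{\delta-s_i:i\in R\}\bigr),\qquad\beta_w=\min\bigl(\{q_w,\delta,\delta-s_n\}\cup\{\gamma-s_i:i\in L\}\bigr),
\]
and $\alpha_w<\beta_w$. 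Substituting $\tilde\gamma,\tilde\delta$ for $\gamma,\delta$ in these formulas yields functions $\tilde\alpha_w,\tilde\beta_w$ that are continuous (maxima and minima of affine functions) and satisfy $\tilde\alpha_w<\tilde\beta_w$ at $(\gamma,\delta)$, so there is an open neighbourhood $N_w$ of $(\gamma,\delta)$ on which $\tilde\alpha_w<\tilde\beta_w$. For $(\tilde\gamma,\tilde\delta)\in N_w$ with $0\le\tilde\gamma<\tilde\delta\le1$ I would then just plug the defining inequalities of $\tilde\alpha_w,\tilde\beta_w$ into the above characterisation (with $L$ giving $T^i(x)<\tilde\gamma$, $R$ giving $T^i(x)\ge\tilde\delta$, and membership in $P_w$ keeping $T^i$ equal to the translation by $s_i$) to see that every point of the nonempty interval $[\tilde\alpha_w,\tilde\beta_w)$ has $\tilde I$-itinerary exactly $w$; hence $w\in\mathit{It}_{\tilde I}$.

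It then remains to choose $H_\gamma$ and $H_\delta$ so that $H_\gamma\times H_\delta\subseteq\bigcap_{w\in\mathit{It}_I}N_w$, a finite intersection because $\#\mathit{It}_I\le k+2$ by Keane's bound; this gives $\mathit{It}_{\tilde I}\supseteq\mathit{It}_I$, and the last assertion follows at once since $\#\mathit{It}_{\tilde I}\le k+2$ as well. I expect the main obstacle to be precisely the explicit interval description of $S_w$: one must notice that each non-convex condition $T^i(x)\notin I$ collapses, along the connected set $S_w$, to a single one-sided linear inequality, so that $\alpha_w$ and $\beta_w$ are honestly max/min of affine expressions. This uniformity is what makes the continuity step robust and, in particular, what handles perturbations that \emph{enlarge} $I$, for which no fixed representative point of $S_w$ need survive, since some $T^i(x)$ may approach the boundary of $I$ as $x$ tends to an endpoint of $S_w$. (The constraint $0\le\tilde\gamma<\tilde\delta\le1$ merely guarantees $\tilde I$ is a valid subinterval; the degenerate cases $\gamma=0$ or $\delta=1$ force $L=\emptyset$ or $R=\emptyset$ and cause no trouble.)
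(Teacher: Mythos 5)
Your proof is correct, but it runs along a genuinely different track from the paper's. The paper picks a single witness point $x_i$ in the \emph{interior} of each level set $I_i=\{x\in I:R_I(x)=R_i\}$, observes that the finite orbit segment $x_i,T(x_i),\dots,T^{r_I(x_i)}(x_i)$ then avoids $\{\gamma,\delta\}$ entirely (an orbit point equal to $\gamma$ or $\delta$ would force nearby points of $I_i$ to have a different return time), takes $\varepsilon$ to be the minimal distance from this finite set of orbit points to $\{\gamma,\delta\}$, and concludes that for $|\tilde\gamma-\gamma|,|\tilde\delta-\delta|<\varepsilon$ the witness has the same itinerary with respect to $\tilde I$ as to $I$. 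That is a three-line argument once the witnesses are chosen; in particular your concern that a fixed representative might not survive an \emph{enlargement} of $I$ does not arise there, precisely because the interior witness's orbit stays a positive distance from both endpoints, so moving the endpoints outward by less than $\varepsilon$ cannot capture any new orbit point. Your route instead makes the level set $S_w=[\alpha_w,\beta_w)$ fully explicit, with endpoints that are maxima and minima of affine functions of $(\gamma,\delta)$ (using the connectedness of $S_w$, asserted in the paper just before the proposition, to resolve each non-convex condition $T^i(x)\notin I$ into a single one-sided inequality), and then invokes continuity of these endpoint functions. This is longer but yields strictly more: it shows the whole partition of $I$ into itinerary classes varies continuously with $(\gamma,\delta)$ and exhibits an entire nonempty subinterval of $\tilde I$ realizing each itinerary, rather than a single surviving point. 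Both arguments correctly reduce the conclusion to a finite intersection of neighbourhoods via Keane's bound $\#\mathit{It}_I\le k+2$.
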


\pfz
Let $\mathit{It}_I=\{R_1,\dots,R_m\}$, $m\in\N$, and $I_i = \{ x \in I \colon R_I(x) = R_i\}$ for $1\leq i\leq m$. As already mentioned,
$m\leq k+2$.
For every $i=1,\dots,m$, consider arbitrary $x_i$ in the interior of $I_i$.
Such $x_i$ satisfies that $T^j(x_i)\notin\{\gamma,\delta\}$  for $0 \leq j \leq r_I(x_i)=|R_I(x_i)|$.
The reason is simple:  if $T^j(x_i)$ were equal to $\gamma$ (or $\delta$), then a point  $x$  in $I_i$    with $x< x_i$  (or $x>  x_i$) would have a longer  return time than $x_i$ itself, which is a contradiction.
Denote $M = \{T^j(x_i) \colon 0\leq j\leq r_I(x_i),\ i=1,\dots,m\}$ and
\[
\varepsilon:=\min\big\{|y-z| \colon y\in M,z\in \{\gamma,\delta\}\big\}\,.
\]
Let $H_\gamma=(\gamma-\varepsilon,\gamma+\varepsilon)$ and $H_\delta=(\delta-\varepsilon,\delta+\varepsilon)$ be
 neighbourhoods of $\gamma$ and $\delta$, respectively.
If $\tilde{\gamma}\in H_\gamma$ and $\tilde{\delta}\in H_\delta$ with $0\leq \tilde{\gamma}<\tilde{\delta}\leq 1$
 and $\tilde{I}=[\tilde{\gamma},\tilde{\delta})$, then clearly
for every $i=1,\dots,m$ we have  $x_i \in {\tilde{I}}$ and
\[
T^j(x_i)\in I \quad\Leftrightarrow\quad T^j(x_i)\in \tilde{I} \qquad \text{ for } 0\leq j\leq r_I(x_i)\,.
\]
Therefore the point $x_i$ has the same return time with respect  to $I$ as to
  $\tilde{I}$.  Consequently, the $\tilde{I}$-itinerary of $x_i$ coincides with the $I$-itinerary of $x_i$.
Thus $\mathit{It}_{{I}} \subseteq \mathit{It}_{\tilde{I}}$.
\pfk

For the rest of the section, we consider only symmetric interval exchange.   In order to state a property of such interval exchanges, for an interval
$K=[c,d)\subset [0,1)$ we set $\overline{K}=[1-d,1-c)$.  In this notation
\begin{equation}\label{zaTabuli}
T(J_X)=\overline{J_X}  \ \text{ for any letter $X\in\{0,1,\dots,k-1\}$\,.}
\end{equation}

\begin{proposition}\label{p:symetrie}
Let  $T:[0,1)\to[0,1)$ be a symmetric exchange of $k$ intervals
satisfying the minimality condition. Let $I\subset[0,1)$ and let
$R_1,\dots,R_m$ be the $I$-itineraries. The
$\overline{I}$-itineraries are the mirror images of the
$I$-itineraries, namely $\overline{R_1},\dots,\overline{R_m}$.
Moreover, if
\[
[\gamma_j,\delta_j):=\{x\in I \colon R_I(x)=R_j\}\quad\text{ and  }\quad [\gamma'_j,\delta'_j):=T_I[\gamma_j,\delta_j)\,,
\]
for $j=1,\dots,m$, then
\[
\{x\in \overline{I} \colon R_{\overline{I}}(x)=\overline{R_j}\} = [1-\delta'_j,1-\gamma'_j)\,.
\]
\end{proposition}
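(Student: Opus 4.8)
The approach rests on the classical fact that a symmetric interval exchange is conjugate to its own inverse via the reflection $\sigma\colon x\mapsto 1-x$. First I would record the dynamical symmetry
\[
1-T^{n}(x)=T^{-n}(1-x)\qquad\text{for all }n\ge 0,
\]
which holds for every point whose orbit avoids the finitely many boundary points involved; since all the claimed equalities concern left-closed right-open subintervals, it is enough to verify them for $x$ in the interior of the relevant fibre, for which this is automatic (as in the proof of Proposition~\ref{p:spojitost}, such orbit points avoid $\gamma$ and $\delta$ up to the return time). The identity is immediate from \eqref{zaTabuli}: on $J_i$ the map $T$ is translation by $c_i$, and by \eqref{zaTabuli} the map $T^{-1}$ is translation by $-c_i$ on $\overline{J_i}=T(J_i)$, so $\sigma T\sigma=T^{-1}$, and iterating gives the displayed relation (recall $\overline{\overline K}=K$).

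The heart of the proof is an itinerary-reversal computation. Fix $j$, write $R_j=w_0w_1\cdots w_{r-1}$ with $r=|R_j|$, pick $x$ in the interior of $[\gamma_j,\delta_j)$, and set $y=T_I(x)=T^{r}(x)\in I$ and $q:=1-y\in\overline I$. By the symmetry relation, $T^{i}(q)=T^{i-r}(1-x)=1-T^{r-i}(x)$ for $0\le i\le r$. Taking $i=0$ and $i=r$ shows $q\in\overline I$ and $T^{r}(q)=1-x\in\overline I$; for $1\le i\le r-1$ the point $T^{r-i}(x)$ lies outside $I$ because the first return of $x$ to $I$ occurs at time $r$, and since the orbit of $x$ avoids $\gamma$ and $\delta$ this forces $T^{i}(q)=1-T^{r-i}(x)\notin\overline I$. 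Hence $r_{\overline I}(q)=r$. Moreover, for $1\le i\le r$ we have $T^{r-i}(x)\in J_{w_{r-i}}$, so \eqref{zaTabuli} gives $T^{i}(q)\in\overline{J_{w_{r-i}}}=T(J_{w_{r-i}})$, i.e.\ $T^{i-1}(q)\in J_{w_{r-i}}$; reindexing by $\ell=i-1$ shows that the coding of $q$ begins with $w_{r-1}w_{r-2}\cdots w_0$. Therefore $R_{\overline I}(q)=\overline{R_j}$, that is, $R_{\overline I}\bigl(1-T_I(x)\bigr)=\overline{R_j}$.

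To assemble the conclusion: since the itinerary is constant on $[\gamma_j,\delta_j)$, each $T^{i}$ with $i<r$ is a translation there, hence so is $T^{r}=T_I$, which therefore maps $(\gamma_j,\delta_j)$ bijectively onto $(\gamma'_j,\delta'_j)$; so as $x$ runs through $(\gamma_j,\delta_j)$ the point $q=1-T_I(x)$ runs through $(1-\delta'_j,1-\gamma'_j)$, giving $(1-\delta'_j,1-\gamma'_j)\subseteq\{z\in\overline I\colon R_{\overline I}(z)=\overline{R_j}\}$. Applying the same computation with $\overline I$ in place of $I$ (using $\overline{\overline I}=I$ and $\overline{\overline{R_j}}=R_j$) yields $\mathit{It}_{\overline I}=\{\overline{R_1},\dots,\overline{R_m}\}$, which is the first assertion; consequently the sets $\overline I_j:=\{z\in\overline I\colon R_{\overline I}(z)=\overline{R_j}\}$ are pairwise disjoint, cover $\overline I$, and each is left-closed right-open by the fact recalled before Proposition~\ref{p:spojitost}. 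On the other hand the intervals $[1-\delta'_j,1-\gamma'_j)$ also partition $\overline I$ — they are the reflections $\overline{[\gamma'_j,\delta'_j)}$ of the pieces $[\gamma'_j,\delta'_j)=T_I[\gamma_j,\delta_j)$, which partition $I$ — their total length equals $|\overline I|$, and each is contained in $\overline I_j$; comparing lengths term by term forces $\overline I_j=[1-\delta'_j,1-\gamma'_j)$ for every $j$, which is the second assertion.

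The step I would be most careful about is the endpoint bookkeeping: $\sigma$ interchanges "closed on the left" and "open on the right", so one cannot merely reflect the half-open fibres and stop there. It is the combination of the reversal computation (which pins everything down on the open interiors) with the partition-and-length argument (forced precisely because both families tile $\overline I$) that upgrades the correspondence to the exact half-open identity; the underlying conjugacy $\sigma T\sigma=T^{-1}$ and the index chase are otherwise routine.
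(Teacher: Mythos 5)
Your proposal is correct and follows essentially the same route as the paper: the conjugacy $1-T^{n}(x)=T^{-n}(1-x)$ (the paper's equation~\eqref{eq:ramec1}, proved there by induction on the set $S$ of points avoiding orbits of discontinuities), followed by the same itinerary-reversal computation sending a generic $x\in(\gamma_j,\delta_j)$ to $q=1-T_I(x)\in(1-\delta'_j,1-\gamma'_j)$. The only difference is cosmetic: where the paper closes by appealing to right-continuity of $T$ to propagate the itinerary of one generic point to all of $[1-\delta'_j,1-\gamma'_j)$, you instead use the partition-and-length comparison of the half-open fibres, which handles the endpoint bookkeeping slightly more explicitly but is not a different argument.
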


\pfz
Consider the restriction of the transformation $T$ to the set
\[
S=[0,1)\setminus\{T^j(\alpha)\colon j\in\Z,\ \alpha\text{ is a discontinuity of }T\}\,.
\]
Such a restriction is a bijection $S\to S$.
We will show by induction that for any $i\geq 1$ and $y\in S$
\begin{equation}\label{eq:ramec1}
T^{-i}(y) = 1-T^{i}(1-y)\,.
\end{equation}

Let $y \in S$ and $j \in \{0,\ldots, k-1\}$ such that $y \in I_j$.
Since $T$ is symmetric, we have
\[
1 - y \in I_j \ \Leftrightarrow \  y \in T(I_j).
\]
The last equivalence and the definition of $T$ imply
\begin{align*}
T(1 - y) &= 1 - y+c_j \qquad \qquad \text{ and } \\
T^{-1}(y) &= y - c_j.
\end{align*}
Summing the last two equalities we obtain
\[
T^{-1}(y) = 1-T(1-y).
\]
Then, using the induction hypothesis, we have for $y\in S$ that
\[
T^{-(i+1)}(y) = T^{-1}\big(T^{-i}(y)\big)=1-T\big(1-T^{-i}(y)\big) = 1- T\big(T^i(1-y)\big) = 1-T^{i+1}(1-y)\,,
\]
which proves~\eqref{eq:ramec1}.

Using \eqref{zaTabuli} we can write for $y\in S$
\begin{equation}\label{eq:ramec3}
T^{-1}(y)\in J_X \ \Leftrightarrow \ y\in T(J_X) \ \Leftrightarrow \ 1-y\in J_X\,.
\end{equation}
More generally,
\begin{equation}\label{eq:ramec2}
T^{-i}(y) = T^{-1}\big(T^{-(i-1)}(y)\big)\in J_X \ \Leftrightarrow \ 1-T^{-(i-1)}(y)=T^{i-1}(1-y)\in J_X\,,
\end{equation}
where we have first used~\eqref{eq:ramec3} and then~\eqref{eq:ramec1}.

Now we  show that if $R_j$ is an $I$-itinerary, then its mirror image $\overline{R_j}$ is an $\overline{I}$-itinerary.
Consider $\rho\in (\gamma_j,\delta_j) \cap S$ and let $R_I(\rho)=a_0a_1\cdots a_{n-1}$ be
its $I$-itinerary, i.e., $a_i=X$ if and only if $T^i(\rho)\in J_X$. Moreover, $T^i(\rho)\notin I$ for $1\leq i<n$, \ and $T^n(\rho)\in I$. Let
\begin{equation}\label{eq:hruzasymetrie}
\rho':=1-T^n(\rho) = 1-T_I(\rho) \in (1-\delta'_j,1-\gamma'_j)\cap S\subset \overline{I}\,.
\end{equation}
By~\eqref{eq:ramec1}, we have $\rho'=T^{-n}(1-\rho)$, and therefore again by~\eqref{eq:ramec1},
$T^i(\rho')=T^{-(n-i)}(1-\rho)=1-T^{n-i}(\rho)\notin \overline{I}$ for $0<i<n$. On the other hand, $T^n({\rho'})=1-\rho\in \overline{I}$.
By~\eqref{eq:ramec2}, we have for $i=0,1,\dots,n-1$ that
\[
J_X\ni T^i({\rho'}) = T^{-(n-i)}(1-\rho)\ \Leftrightarrow \ T^{n-i-1}(\rho)\in J_X\,,
\]
which implies that the $\overline{I}$-itinerary of $\rho'$ is $R_{\overline{I}}({\rho'})=a_{n-1}a_{n-2}\cdots a_0$, as we wanted to show.

By right continuity of $T$, all points from $[1-\delta'_j,1-\gamma'_j)$ have the same $\overline{I}$-itinerary as $\rho'\in(1-\delta'_j,1-\gamma'_j)\cap S$.
\pfk

The above auxiliary statements will be used in Section~\ref{sec:returntimes} for the description of $I$-itineraries in exchanges of three intervals.
Analogous result for exchange of two intervals was given in~\cite{MaPeItineraries}.
The claim of the last proposition also partially follows from the work done in \cite{FeHoZa}.

\section{Exchange of three intervals}\label{sec:3iet}

We will be particularly interested in exchange of three intervals. For reasons that will
appear later, we prefer to use for its coding the ternary alphabet $\{A,B,C\}$
instead of $\{0,1,2\}$. Without loss of generality let $0<\alpha<\beta<1$. Let $T:[0,1)\to[0,1)$ be given by
\begin{equation}\label{eq:3IET}
T(x)=\begin{cases}
x+1-\alpha & \text{if } x\in[0,\alpha)=: J_A\,,\\
x+1-\alpha-\beta & \text{if } x\in[\alpha,\beta)=: J_B\,,\\
x-\beta & \text{if } x\in[\beta,1)=: J_C\,.
\end{cases}
\end{equation}
The transformation $T$ is an exchange of three intervals with the permutation (321). It is often called a \textit{3iet} for short.
The infinite word ${\bf u}_\rho$ coding the orbit of a point $\rho\in[0,1)$ under a 3iet is called a \textit{3iet word}.

We require that $1-\alpha$ and $\beta$ be linearly independent over $\Q$, which is known to be a necessary and sufficient condition for minimality of
the 3iet $T$.
Non-degeneracy of $T$ is equivalent to the condition of minimality together with
\begin{equation}\label{eq:nondeg}
1\notin (1-\alpha)\Z + \beta\Z\,,
\end{equation}
see \cite{FeHoZa}.
This means that the 3iet word ${\bf u}$ has complexity ${\mathcal C}_{\bf u}(n)=2n+1$
if and only if the parameters $\alpha$ and $\beta$ of the corresponding 3iet $T$ satisfy~\eqref{eq:nondeg}.

For a given subinterval $I\subset [0,1)$ there exist at most five $I$-itineraries under a 3iet $T$.
In particular, from the paper of Keane~\cite{Keane}, one can deduce what are the intervals of
 points with the same itinerary. We summarize it as the following lemma.

\begin{lemma}\label{l:keane}
Let $T$ be a 3iet defined by~\eqref{eq:3IET} and let $I=[\gamma,\delta)\subset[0,1)$ such that $\delta<1$. Denote
\[
\begin{aligned}
k_\alpha &:=\min\big\{k\in\Z,\, k\geq 0 \colon T^{-k}(\alpha)\in (\gamma,\delta)\big\}\,,\\
k_\beta &:=\min\big\{k\in\Z,\, k\geq 0 \colon T^{-k}(\beta)\in(\gamma,\delta) \big\}\,,\\
k_\gamma &:=\min\big\{k\in\Z,\, k\geq 1 \colon T^{-k}(\gamma)\in(\gamma,\delta)\big\}\,,\\
k_\delta &:=\min\big\{k\in\Z,\, k\geq 1 \colon T^{-k}(\delta)\in(\gamma,\delta)\big\}\,,
\end{aligned}
\]
and further
\[
\a:= T^{-k_\alpha}(\alpha),\ \b:= T^{-k_\beta}(\beta),\ \c:= T^{-k_\gamma}(\gamma),\ \d:= T^{-k_\delta}(\delta).
\]
For $x\in I$, let $K_x$ be a maximal interval such that for every $y\in K_x$, we have $R(y)=R(x)$. Then $K_x$ is of the form $[c,d)$ with
$c,d \in \{ \gamma,\delta,\a,\b,\c,\d \}$. Consequently, $\#{\rm It}_I\leq 5$.
\end{lemma}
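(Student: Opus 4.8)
The plan is to analyze how the first-return map $T_I$ on $I$ is built from the underlying 3iet, and to identify the discontinuity points of $T_I$, since the maximal intervals of constant itinerary are precisely the maximal intervals on which $r_I$ (equivalently $R_I$) is locally constant, i.e.\ the pieces cut out by discontinuities of $r_I$ inside $I$. First I would recall the standard fact (attributed to Keane) that for a point $x\in I=[\gamma,\delta)$, a jump in the return time can occur at $x$ only if some iterate $T^j(x)$, $0\le j< r_I(x)$, hits a point where the ``future dynamics until the next visit to $I$'' changes abruptly; such a point must be either a discontinuity of $T$ itself (i.e.\ $\alpha$ or $\beta$), or the left endpoint $\gamma$, or the right endpoint $\delta$ of $I$ (hitting $\gamma$ from the left forces an earlier return for points slightly below $x$; hitting $\delta$ forces a later return for points slightly above). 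So the candidate left/right endpoints $c,d$ of a maximal interval $K_x$ of constant itinerary are exactly the points of $I$ of the form $T^{-k}(p)$ with $p\in\{\alpha,\beta,\gamma,\delta\}$.

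The key reduction is that for each such $p$ we only need the \emph{smallest} such preimage landing in the open interval $(\gamma,\delta)$. Concretely, suppose $T^{-k}(p)\in(\gamma,\delta)$ with $k>k_p$ (using the notation $k_\alpha,k_\beta,k_\gamma,k_\delta$ of the statement); then on the orbit segment from $T^{-k}(p)$ forward, the point $T^{-k_p}(p)=\mathfrak{A}$ (resp.\ $\mathfrak B,\mathfrak C,\mathfrak D$) is itself an interior point of $I$, so the point $T^{-k}(p)$ returns to $I$ strictly before reaching $p$; hence $p$ being a discontinuity is ``invisible'' to the itinerary at $T^{-k}(p)$ — the itinerary does not jump there. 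This is precisely where right-continuity of $T$ (recorded in the paragraph before Proposition~\ref{p:spojitost}) and minimality are used: minimality guarantees $k_\alpha,k_\beta,k_\gamma,k_\delta$ are finite, and right-continuity ensures that the intervals of constant itinerary are themselves left-closed right-open, so that their endpoints are well defined and lie in the finite set $\{\gamma,\delta,\mathfrak A,\mathfrak B,\mathfrak C,\mathfrak D\}$. I would make the ``returns before reaching $p$'' argument rigorous by an explicit comparison of return times for two nearby points straddling $T^{-k}(p)$, exactly in the style of the interior-point argument already used in the proof of Proposition~\ref{p:spojitost}.

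With the endpoints confined to the six-element set $\{\gamma,\delta,\mathfrak A,\mathfrak B,\mathfrak C,\mathfrak D\}$, the interval $I$ is partitioned by these cut points into at most five subintervals (six ordered points on a line bound at most five consecutive half-open intervals between them), each of which carries a single constant itinerary; distinct subintervals need not have distinct itineraries, which still gives the bound $\#\mathrm{It}_I\le 5$. I expect the main obstacle to be the careful bookkeeping in the ``earlier return'' argument showing that preimages of $\alpha,\beta,\gamma,\delta$ with index exceeding $k_\alpha,k_\beta,k_\gamma,k_\delta$ do not create new discontinuities of $r_I$ — in particular handling the subtle asymmetry that $\gamma$ matters only as a left-limit obstruction and $\delta$ only as a right-limit obstruction, and ensuring that the minimality hypothesis is invoked correctly so all four indices are finite. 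The rest is a clean counting argument.
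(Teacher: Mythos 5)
The paper does not actually prove this lemma --- it is stated as a summary of what ``one can deduce'' from Keane's paper --- so there is no internal proof to compare against; judged on its own merits, your plan is sound and is the standard argument. The core of it, namely that $R_I$ can be discontinuous only at a point whose forward orbit up to and including the first return meets $\{\alpha,\beta,\gamma,\delta\}$, and that among the preimages $T^{-k}(p)\in(\gamma,\delta)$ only the one with minimal $k$ can be such a discontinuity because any deeper preimage returns to $I$ strictly before its orbit reaches $p$, is exactly right, and your justification of the reduction (the orbit of $T^{-k}(p)$ hits $T^{-k_p}(p)\in I$ after $k-k_p\le k$ steps, so $r_I(T^{-k}(p))\le k-k_p$, which is incompatible with the hit on $p$ occurring before the return) goes through under the paper's standing minimality assumption. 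Two small corrections for when you write it out. First, the window $0\le j<r_I(x)$ is right for $\alpha$, $\beta$ and $\delta$, but the obstruction coming from $\gamma$ occurs at $j=r_I(x)$ itself: since $\gamma\in I$, an orbit cannot pass through $\gamma$ strictly before returning, so the relevant event is the first return landing exactly on $\gamma$, which makes $x=\mathfrak{C}$ the left endpoint of a new constancy interval. Second, the direction of that jump is the reverse of what you wrote: if $T^{r_I(x)}(x)=\gamma$, points slightly below $x$ land slightly below $\gamma$, i.e.\ outside $I$, and hence return \emph{later}, not earlier. Neither issue affects the conclusion that the endpoints of the constancy intervals lie in $\{\gamma,\delta,\mathfrak{A},\mathfrak{B},\mathfrak{C},\mathfrak{D}\}$, nor the resulting count of at most five half-open pieces.
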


For a 3iet $T$, Lemma~\ref{l:keane} implies the already mentioned result that $T_I$ is an exchange of at most 5 intervals. Consequently,
the transformation $T_I$ has at most four discontinuity points. In fact, the following result of~\cite{BaMaPe}
says that independently of the number of $I$-itineraries,
the induced map $T_I$ has always at most two discontinuity points.

\begin{proposition}[\cite{BaMaPe}]\label{p:bamape3iet}
Let $T:J\to J$ be a 3iet with the permutation $(321)$ and satisfying the minimality condition, and let $I\subset J$
be an interval. The first return map $T_I$ is either a 3iet with permutation $(321)$ or a 2iet with permutation $(21)$.
In particular, in the notation of Lemma~\ref{l:keane}, we have $\d\leq \c$.
\end{proposition}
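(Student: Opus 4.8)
The plan is first to establish the structural statement — that $T_I$ is a 3iet with permutation $(321)$ or a 2iet with permutation $(21)$ — and then to deduce $\d\le\c$ by reading off the description of the subintervals exchanged by $T_I$. For the structural statement I would reduce to intervals sharing an endpoint with $J=[0,1)$. From the factorization $T_I=\bigl(T_{[0,\delta)}\bigr)_{[\gamma,\delta)}$ it is enough to handle a right-truncation $T\mapsto T_{[0,\delta)}$ and a left-truncation $T\mapsto T_{[\gamma,1)}$ of a symmetric interval exchange and to check that these operations keep us within the class of symmetric 3iets and 2iets. The left-truncation reduces to the right-truncation via the reversal symmetry of Proposition~\ref{p:symetrie}: with $\sigma(x)=1-x$ one has $\sigma T\sigma=T^{-1}$ on $S$, and the first return map of $T^{-1}$ to an interval $K$ is $(T_K)^{-1}$; hence $T_{[\gamma,1)}$ is $\sigma$-conjugate to $\bigl(T_{[0,1-\gamma)}\bigr)^{-1}$, and since the inverse and the $\sigma$-conjugate of a symmetric 3iet (resp.\ 2iet) are again symmetric 3iets (resp.\ 2iets), the left case follows from the right case. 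As every induced map of a $(21)$ 2iet is again a $(21)$ 2iet, it remains only to analyse a right-truncation $T_{[0,\delta)}$ of a symmetric 3iet.

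Here I would argue directly. By Lemma~\ref{l:keane} the only possible discontinuities of $T_{[0,\delta)}$ are $\a,\b,\c,\d$ (taken with $\gamma=0$), and the crux is that $T_{[0,\delta)}$ has at most two \emph{genuine} (non-removable) discontinuities. I would verify this by a case analysis on the position of $\delta$ relative to $\alpha$, $\beta$ and the first few terms of the relevant backward orbits: some of $\a,\b,\c,\d$ coincide (for instance $\b=\c=\beta$ as soon as $\delta>\beta$, since $T^{-1}(0)=\beta$), and each surviving candidate is shown either to be a true discontinuity of $T_{[0,\delta)}$ or to be a point at which the two one-sided images of $T_{[0,\delta)}$ agree, hence removable. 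This is the exact mechanism separating the symmetric ternary permutation $(321)$, which has two genuine discontinuities, from the cyclic ones $(231)$ and $(312)$, each of which has only one genuine discontinuity and a second, spurious partition point, and thus merely describes a $(21)$ 2iet with a redundant marked point. Once at most two genuine discontinuities are in hand, $T_{[0,\delta)}$ is a genuine 2iet or 3iet, necessarily with an irreducible permutation (since $T$, hence $T_{[0,\delta)}$, is minimal); with two genuine discontinuities it is then the symmetric 3iet $(321)$, the remaining irreducible ternary permutations being the cyclic ones, which are excluded, and with one genuine discontinuity it is the 2iet $(21)$. I expect this case analysis to be the genuine obstacle: it is the only point at which one must use that $T$ has exactly two genuine discontinuities and that the first return map never creates a third, and it has to be organised carefully into the regions delimited by $\alpha$, $\beta$ (and, for the general subinterval, by the backward orbit of the right endpoint).

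Granting the structural statement, $\d\le\c$ is immediate. If $T_I$ is a $(321)$ 3iet on $I=[\gamma,\delta)$ with exchanged subintervals $[\gamma,e_1)<[e_1,e_2)<[e_2,\delta)$, then, the permutation reversing their order, $[\gamma,e_1)$ is mapped onto the top subinterval $[\delta-(e_1-\gamma),\delta)$ and $[e_2,\delta)$ onto the bottom subinterval $[\gamma,\gamma+(\delta-e_2))$; hence the point of $I$ whose $T_I$-image equals the left endpoint $\gamma$ is $e_2$, and the point whose $T_I$-image tends to the right endpoint $\delta$ from the left is $e_1$. Comparing with Lemma~\ref{l:keane}, where $\c=T^{-k_\gamma}(\gamma)$ is exactly the point of $(\gamma,\delta)$ whose forward $T$-orbit first hits $\gamma$, i.e.\ the point sent onto $\gamma$ by $T_I$, and $\d=T^{-k_\delta}(\delta)$ the point whose forward $T$-orbit first hits $\delta$, we get $\c=e_2$ and $\d=e_1$, so $\d<\c$. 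When $T_I$ is a $(21)$ 2iet with split point $e$ the same computation gives $\d=\c=e$. In either case $\d\le\c$.
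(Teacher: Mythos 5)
The paper does not prove this proposition at all --- it is imported from \cite{BaMaPe} --- so there is no internal argument to compare against; your proposal has to stand on its own, and as written it does not. The outer scaffolding is fine: the factorization $T_I=\bigl(T_{[0,\delta)}\bigr)_{[\gamma,\delta)}$ (transitivity of induction), the reduction of the left-truncation to the right-truncation via $\sigma T\sigma=T^{-1}$, the exclusion of reducible permutations by minimality and of the cyclic ternary permutations by the observation that they force two adjacent pieces to share a translation constant, and the identification $\d=e_1$, $\c=e_2$ at the end are all correct (modulo half-open endpoint conventions in the $\sigma$-conjugation). But the entire substance of the proposition is the claim that $T_{[0,\delta)}$ has at most two genuine discontinuities, and at exactly that point you write ``I would verify this by a case analysis \dots I expect this case analysis to be the genuine obstacle.'' That is an announcement of a proof, not a proof. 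Lemma~\ref{l:keane} gives you up to four candidate break points $\a,\b,\c,\d$ and up to five itinerary pieces even when $\gamma=0$ (only $\b=\c$ collapses for free there); the reason only $\c$ and $\d$ survive as true discontinuities is that the itineraries of adjacent pieces separated by $\a$ or $\b$ differ by the rewritings $\omega_{B\to AC}$, $\omega_{AC\to B}$, etc.\ (Lemma~\ref{l:jakitiner}), and these preserve the displacement because $c_A+c_C=(1-\alpha)+(-\beta)=c_B$. Some such argument must actually be carried out; without it nothing is established.

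There is also a concrete false statement: ``every induced map of a $(21)$ 2iet is again a $(21)$ 2iet.'' It is not --- the first return map of the rotation $x\mapsto x+0.3 \bmod 1$ to $[0,0.45)$ has three pieces $[0,0.15)$, $[0.15,0.4)$, $[0.4,0.45)$ with pairwise distinct translation constants $0.3$, $-0.1$, $-0.4$, i.e.\ it is a genuine $(321)$ 3iet. What is true, and what your reduction actually needs, is that the induced map of a $(21)$ 2iet stays in the class $\{(321)\text{ 3iet},\,(21)\text{ 2iet}\}$; that is again a special case of the very statement you are trying to prove and requires the same kind of analysis, so it cannot be waved away. In short: the strategy is viable, the peripheral steps are right, but the load-bearing step is missing.
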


\noindent{\bf Convention:} For the rest of the paper, let $T$ be a non-degenerate exchange of three intervals with permutation (321) given by~\eqref{eq:3IET}.

\section{Return time in a 3iet}\label{sec:returntimes}

The aim of this section is to describe the possible return times of a non-degenerate $3$iet $T$ to a general subinterval $I\subset[0,1)$.
Our aim is to prove the following theorem.

\begin{theorem}\label{t:returntime}
Let $T$ be a non-degenerate 3iet and let $I\subset[0,1)$. There exist positive integers $r_1,r_2$ such that the
return time of any $x\in I$ takes value in the set
$\{r_1,r_1+1,r_2,r_1+r_2,r_1+r_2+1\}$ or $\{r_1,r_1+1,r_2,r_2+1,r_1+r_2+1\}$.
\end{theorem}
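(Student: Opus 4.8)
The plan is to use the structural results already established — Lemma~\ref{l:keane} and especially Proposition~\ref{p:bamape3iet} — to reduce the claim to a finite case analysis on how many $I$-itineraries there are and how the induced map $T_I$ decomposes. First I would recall from Proposition~\ref{p:bamape3iet} that $T_I$ is either a 2iet with permutation $(21)$ or a 3iet with permutation $(321)$, and that in the notation of Lemma~\ref{l:keane} we have $\mathfrak{d}\leq\mathfrak{c}$. The six candidate endpoints $\gamma,\delta,\mathfrak{a},\mathfrak{b},\mathfrak{c},\mathfrak{d}$ from Lemma~\ref{l:keane}, together with the inequality $\mathfrak{d}\leq\mathfrak{c}$, cut $I$ into at most five subintervals on which the return time is constant; the key point is that the return times on adjacent subintervals differ in a controlled way, because moving the left endpoint of $I$ slightly (or equivalently crossing one of the points $T^{-k}(\gamma)$, $T^{-k}(\delta)$, $T^{-k}(\alpha)$, $T^{-k}(\beta)$) either leaves the itinerary intact, or inserts/deletes exactly one letter, or concatenates two itineraries.

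The main engine is the following elementary observation about return times, which I would state and prove as a preliminary step: if $I = I_1 \cup I_2$ is split into two subintervals $I_1 < I_2$ by a single point of the orbit of a discontinuity (or of $\gamma$, $\delta$), then for $x\in I_1$ the return time $r_I(x)$ equals either $r_{I_1}(x)$, or $r_{I_1}(x) + r_{I_2}(T_{I_1}(x))$ depending on whether $T_{I_1}(x)$ already lies in $I$ or only in $I_1\setminus I$; and symmetrically for $x\in I_2$. Iterating this decomposition, together with the fact (Proposition~\ref{p:bamape3iet}) that the ``deeper'' induced maps are themselves 2iets or 3iets with the same permutation structure, forces the set of return times of $T_I$ to be built out of at most two ``atomic'' values by the operations $r\mapsto r$, $r\mapsto r+1$ (crossing a discontinuity point $\alpha$ or $\beta$, which inserts one symbol), and $r_1,r_2\mapsto r_1+r_2$ or $r_1+r_2+1$ (crossing an endpoint $\gamma$ or $\delta$, which glues two return words, possibly with one extra symbol). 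The precise bookkeeping — checking that exactly the two listed five-element sets arise and no other combination — is the case analysis: one runs through the possibilities for which of $\mathfrak{a},\mathfrak{b},\mathfrak{c},\mathfrak{d}$ fall strictly inside $(\gamma,\delta)$ and in what order, using $\mathfrak{d}\leq\mathfrak{c}$ to rule out the spurious configurations.

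Concretely I expect the argument to go: (1) if $T_I$ is a 2iet, then $\#\mathrm{It}_I\le 4$ and a direct look at a 2iet's return structure (or the analogous two-interval result cited from~\cite{MaPeItineraries}) gives return times in $\{r_1, r_1+1, r_2, r_1+r_2, r_1+r_2+1\}$ with possibly fewer values; (2) if $T_I$ is a 3iet with permutation $(321)$, then the three ``long-itinerary'' classes correspond to the three intervals $J_A, J_B, J_C$ of the induced exchange, their return times being three values of the form $r_1, r_2, r_1+r_2+1$ (reading off the symmetric structure, as $T$ itself has this shape: the words coding $J_A, J_B, J_C$ satisfy $|w_B| = |w_A| + |w_C| - 1$ after the appropriate normalization), while the one or two remaining classes coming from $\mathfrak a,\mathfrak b$ insert a single letter, yielding $r_1+1$ or $r_2+1$. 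Matching these against which subintervals are actually nonempty produces exactly the two sets in the statement. The main obstacle is precisely this last matching — ensuring completeness of the case split and that the normalization giving the ``$+1$'' in $r_1+r_2+1$ (rather than $r_1+r_2$) is the one that always occurs for the three full classes of a non-degenerate 3iet; here non-degeneracy, via~\eqref{eq:nondeg}, is what prevents a coincidence of orbit points that would collapse a case or shift a return time by one. I would handle this by invoking Proposition~\ref{p:symetrie} to halve the work (the $\overline I$ case is the mirror of the $I$ case) and Remark~\ref{pozn:skakani} to move $I$ inside a single $J_X$ when convenient, so that only a few genuinely distinct configurations remain to be checked by hand.
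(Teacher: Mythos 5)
Your plan follows essentially the same route as the paper's proof: the key mechanism you identify (crossing an orbit point of $\alpha$ or $\beta$ inserts or deletes a single letter via $AC\leftrightarrow B$ or $CA\leftrightarrow B$, while crossing an orbit point of $\gamma$ or $\delta$ concatenates two itineraries) is exactly the content of the paper's Lemma~\ref{l:jakitiner}, and the subsequent bookkeeping over the orderings of $\a,\b,\c,\d$ subject to $\d<\c$ is the paper's twelve-case analysis. The only cosmetic difference is that the paper disposes of the non-generic configurations (fewer than five itineraries, or coinciding orbits) by perturbing $I$ via Proposition~\ref{p:spojitost} rather than by treating the 2iet case separately, but this does not change the substance of the argument.
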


First, we will formulate an important lemma, which needs the following notation. Given letters $X,Y,Z\in\{A,B,C\}$ and a finite word $w\in\{A,B,C\}^*$,
Let $\omega_{XY\to Z}(w)$ be the set of words obtained from $w$ replacing one factor $XY$ by the letter $Z$, i.e.
\[
\omega_{XY\to Z}(w) = \{w_1Zw_2 \colon w=w_1XYw_2\}\,.
\]
Similarly,
\[
\omega_{Z\to XY}(w) = \{w_1XYw_2 \colon w=w_1Zw_2\}\,.
\]
Clearly,
\begin{equation}\label{eq:iff}
v\in\omega_{XY\to Z}(w) \ \Leftrightarrow\ w\in\omega_{Z\to XY}(v).
\end{equation}
By abuse of notation, we write $v=\omega_{XY\to Z}(w)$ instead of $v\in \omega_{XY\to Z}(w)$.

\begin{lemma}\label{l:jakitiner}
Assume that the orbits of points $\alpha, \beta, \gamma$ and $\delta$ are mutually disjoint.
For sufficiently small $\varepsilon>0$, we have the following relations between $I$-itineraries of points in $I$
\begin{enumerate}[(a)]
\item \label{l:jakitiner_1} $R(\a-\varepsilon) = \omega_{B\to AC}\big(R(\a+\varepsilon)\big)$, \label{enum:A-}
\item \label{l:jakitiner_2} $R(\a+\varepsilon) = \omega_{AC\to B}\big(R(\a-\varepsilon)\big)$, \label{enum:A+}
\item \label{l:jakitiner_3} $R(\b-\varepsilon) = \omega_{CA\to B}\big(R(\b+\varepsilon)\big)$, \label{enum:B-}
\item \label{l:jakitiner_4} $R(\b+\varepsilon) = \omega_{B\to CA}\big(R(\b-\varepsilon)\big)$, \label{enum:B+}
\item \label{l:jakitiner_5} $R(\d+\varepsilon) = R(\d-\varepsilon)R(\delta-\varepsilon)$, \label{enum:D+}
\item \label{l:jakitiner_6} $R(\c-\varepsilon) = R(\c+\varepsilon)R(\gamma+\varepsilon)$, \label{enum:C-}
\end{enumerate}
where $\a,\b,\c,\d$ are given in Lemma~\ref{l:keane}.
\end{lemma}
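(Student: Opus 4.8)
The plan is to analyze the six cases in three pairs, using the geometry of the 3iet near the relevant special points $\a,\b,\c,\d$. For the pairs \eqref{l:jakitiner_1}--\eqref{l:jakitiner_2} and \eqref{l:jakitiner_3}--\eqref{l:jakitiner_4}, observe that by \eqref{eq:iff} it suffices to prove one direction in each pair. The key geometric idea is this: if $R(\a\pm\varepsilon)=a_0a_1\cdots a_{n-1}$ then during the first $n$ steps the orbits of $\a-\varepsilon$ and $\a+\varepsilon$ travel together \emph{except} near the moment $j$ when $T^j$ lands in a tiny neighbourhood of $\alpha=T^{k_\alpha}(\a)$, which (since the orbits of $\alpha,\beta,\gamma,\delta$ are disjoint and by the choice of a sufficiently small $\varepsilon$, invoking Proposition~\ref{p:spojitost} so that the itineraries are stable) is the only discontinuity of $T$ that can separate the two orbits before returning to $I$. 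At that step, $\a-\varepsilon$ maps into $J_A$ just below $\alpha$ while $\a+\varepsilon$ maps into $J_B$ just above $\alpha$; the next image of the point in $J_A$ lies in $T(J_A)=\overline{J_A}=[1-\alpha,1)\subset J_C$ just below $1$, i.e.\ the letter $A$ is immediately followed by $C$, whereas the point in $J_B$ produces a single letter $B$ at that position, and then both orbits reunite (they land at the same point of $J_C$, as a direct computation with the formulas in~\eqref{eq:3IET} shows). This yields exactly one extra letter, namely that the $B$ in $R(\a+\varepsilon)$ at position $j$ is replaced by $AC$ in $R(\a-\varepsilon)$, which is the statement $R(\a-\varepsilon)=\omega_{B\to AC}(R(\a+\varepsilon))$.

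For \eqref{l:jakitiner_3}--\eqref{l:jakitiner_4} the argument is the mirror image: near the step $j$ where the common orbit approaches $\beta=T^{k_\beta}(\b)$, the point $\b-\varepsilon$ lands in $J_B$ just below $\beta$ and $\b+\varepsilon$ lands in $J_C$ just above $\beta$; since $T(J_C)=\overline{J_C}=[0,1-\beta)$ begins at $0$, the image of the $J_C$-point lies in $J_A$ just above $0$, so at this position $R(\b+\varepsilon)$ reads $CA$ while $R(\b-\varepsilon)$ reads the single letter $B$, and again the two orbits merge afterwards. This gives $R(\b-\varepsilon)=\omega_{CA\to B}(R(\b+\varepsilon))$, and \eqref{l:jakitiner_4} follows by \eqref{eq:iff}. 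One must also check that the orbit does not \emph{return} to $I$ strictly between the split and the merge, which is where the hypothesis $\delta<1$ from Lemma~\ref{l:keane}, together with the disjointness of orbits guaranteeing that no endpoint of $I$ is hit, is used to control $r_I$ on the small intervals around $\a$ and $\b$.

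For \eqref{l:jakitiner_5}--\eqref{l:jakitiner_6}, the mechanism is different: here the separating point is an endpoint of $I$ itself rather than a discontinuity of $T$. Consider \eqref{l:jakitiner_5}: the point $\d$ is characterized by $T^{k_\delta}(\d)=\delta$, so as the orbits of $\d-\varepsilon$ and $\d+\varepsilon$ run together, at step $k_\delta$ the image of $\d-\varepsilon$ falls just below $\delta$ — hence \emph{inside} $I=[\gamma,\delta)$ — while the image of $\d+\varepsilon$ falls just above $\delta$, outside $I$. Thus $\d-\varepsilon$ returns to $I$ exactly at step $r_I(\d-\varepsilon)=k_\delta$, so $R(\d-\varepsilon)$ is a prefix of what $\d+\varepsilon$ does; and since the image point $T^{k_\delta}(\d-\varepsilon)$ lies just below $\delta$, it equals $\delta-\varepsilon'$ for the appropriate $\varepsilon'$, so the remainder of the itinerary of $\d+\varepsilon$ (after it passes the same configuration) is precisely $R(\delta-\varepsilon)$. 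This gives $R(\d+\varepsilon)=R(\d-\varepsilon)R(\delta-\varepsilon)$, provided $\varepsilon$ is small enough that $\delta-\varepsilon$ is genuinely interior to the corresponding itinerary block of $\delta$ and that $\d+\varepsilon$ does not return to $I$ before step $k_\delta$ — the latter holds because $k_\delta$ is minimal in its definition in Lemma~\ref{l:keane}. Case \eqref{l:jakitiner_6} is the symmetric statement at the left endpoint $\gamma$, obtained analogously (or formally via Proposition~\ref{p:symetrie} applied to $\overline I$).

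The main obstacle I expect is the bookkeeping needed to make "the orbits run together until the split, then reunite" precise and uniform in $\varepsilon$: one has to fix a single $\varepsilon$ small enough to work simultaneously for all six relations, show that before the designated step no \emph{other} discontinuity of $T$ and no endpoint of $I$ separates the two orbits (this is exactly where the hypothesis that the four orbits are mutually disjoint, together with Proposition~\ref{p:spojitost} for stability of $\mathrm{It}_I$, does the work), and verify by the explicit formulas~\eqref{eq:3IET} and \eqref{zaTabuli} that after the $AC$/$B$ (resp.\ $CA$/$B$) excursion the two orbits land at the same point. Once that uniformity is set up, each individual case is a short computation.
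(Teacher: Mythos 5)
Your overall strategy is the same as the paper's: follow a small two-sided neighbourhood of each special point, locate the unique step at which its two halves are separated (by the discontinuity $\alpha$ or $\beta$ in cases (a)--(d), by the endpoint $\delta$ or $\gamma$ of $I$ in cases (e)--(f)), compute the local effect ($B\leftrightarrow AC$, $B\leftrightarrow CA$, or concatenation of itineraries), and use~\eqref{eq:iff} to deduce (b) and (d) from (a) and (c). There is, however, one concrete error that leaves a gap in your treatment of the suffix after the excursion. You assert --- twice, and it is the sole justification you offer for the two itineraries agreeing after the split --- that ``the two orbits land at the same point.'' The direct computation with~\eqref{eq:3IET} shows otherwise: starting from $T^{k_\alpha}(\a\mp\varepsilon)=\alpha\mp\varepsilon$, one gets $T^{2}(\alpha-\varepsilon)=1-\beta-\varepsilon$ while $T(\alpha+\varepsilon)=1-\beta+\varepsilon$, so after emitting $AC$ and $B$ respectively the two orbits sit on \emph{opposite sides} of $T(\alpha)=1-\beta$ at distance $2\varepsilon$, not at a common point. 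Hence the coincidence of the remaining letters is not automatic; one must still argue that the interval $K'=[T(\alpha)-\varepsilon,\,T(\alpha)+\varepsilon]$ has forward images that are intervals avoiding $\alpha,\beta,\gamma,\delta$ until the common return to $I$. In other words, the ``no other discontinuity, no endpoint of $I$'' avoidance condition must be imposed and used for the steps \emph{after} the excursion as well, whereas your plan only provides it ``before the designated step.'' This is exactly how the paper closes the argument: $\varepsilon$ is chosen so that $\alpha,\beta,\gamma,\delta\notin T^i(K)$ for all $i$ up to the maximal return time $t$ of points of $K$ (with the single exception $T^{k_\alpha}(\a)=\alpha$), which forces every $T^i(K')$ to be an interval disjoint from $I$ until the return, and hence forces a common suffix $v$. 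The same repair is needed in cases (e)--(f): $T^{k_\delta}(\d+\varepsilon)=\delta+\varepsilon$ is not the point $\delta-\varepsilon$ whose itinerary you wish to append, but both lie in the interval $[\delta-\varepsilon,\delta+\varepsilon]$, which travels coherently for the next $s$ steps under the analogous avoidance condition. Once this is corrected, the proposal matches the paper's proof.
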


\pfz
We will first demonstrate the proof of the case \ref{l:jakitiner_1}.
Let $K = [\a-\varepsilon, \a+\varepsilon]$ with $\varepsilon$ chosen such that $K\subset I$ and
\begin{equation}\label{l:j_d2}
\alpha,\beta,\gamma,\delta \not \in T^i(K) \text{ for all }0 \leq i \leq k_\alpha
\text{ with the only exception of } T^{k_\alpha}(\a) = \alpha.
\end{equation}
For simplicity, denote $t=\max\{r_I(x) \colon x\in K\}$ the maximal return time.
The existence of such $\varepsilon$ follows trivially from the definition of the interval exchange transformation and the assumptions of the lemma.

Let $K_- = [\a - \varepsilon,\a)$ and $K_+ = [\a,\a+\varepsilon]$.
It follows from the definition of $\a$ and condition \eqref{l:j_d2} that for all $i$ such that $0 < i \leq k_\alpha$ we have $T^i(K) \cap I = \emptyset$.
Moreover, condition \eqref{l:j_d2} implies that all such $T^i(K)$ are intervals.
It implies that for any $x,y \in K$, the prefixes of $R(x)$ and $R(y)$ of length $k_\alpha+1$ are the same.
Denote this prefix by $w$.

The definition of $k_\alpha$ implies that $\alpha \in T^{k_\alpha}(K)$.
Since $T^{k_\alpha}(K_+) = [\alpha,\alpha+\varepsilon] \subset J_B$, we obtain
\[
T^{k_\alpha+1}(K_+) = \big[T(\alpha),T(\alpha)+\varepsilon\big)\,.
\]
Furthermore, since $T^{k_\alpha}(K_-)  = [\alpha-\varepsilon,\alpha) \subset J_A$, we obtain
\[
T^{k_\alpha+1}(K_-) =  [1-\varepsilon,1)\subset J_C\,,
\]
and thus
\[
T^{k_\alpha+2}(K_-) =  \big[T(\alpha)-\varepsilon,T(\alpha)\big)\,.
\]
This implies that the set $K'=T^{k_\alpha+2}(K_-) \cup T^{k_\alpha+1}(K_+) = [T(\alpha)-\varepsilon,T(\alpha)+\varepsilon]$ is an interval.
As above, condition \eqref{l:j_d2} implies that the set $T^i(K')$ is an interval for all $i$ such that $0 \leq i \leq t - k_\alpha - 1$.
It follows that $\min \{ i \colon T^i(K') \cap K \neq \emptyset \}  = t - k_\alpha - 2$ and condition \eqref{l:j_d2} moreover implies that $T^{t-k_\alpha-2}(K') \subset K$.
Thus, the iterations $x,T(x),\dots,T^{t-k_\alpha-2}(x)$ of every $x \in K'$ are coded be the same word, say $v$.

\newcommand\ietpoint[5][pos=1,below]{
    \draw[style=#2] (#3,0.12) -- (#3,-0.12) node[#1]{#4};
    \node[ietinvisible] (#5) at (#3,0){};
}
\tikzset{ietinvisible/.style={outer sep=4,inner sep=0,minimum size=0}}
\newcommand\ietbasic[6][]{
\begin{scope}[every path/.style={very thick},#1]
  \node(start)[at={(0,0)},label=below:{$0$}]{};
  \node(end)[at={(100,0)},label=below:{$1$}]{};
  \draw[[-)] (start.center) -- (end.center);
  \ietpoint{discoAl}{#2}{$\alpha$}{alpha}
  \ietpoint{discoBe}{#3}{$\beta$}{beta}
  \node(startI)[at={(#4,0)},label=below:{$\gamma$}]{};
  \node(endI)[at={(#5,0)},label=below:{$\delta$}]{};
  \draw[[-),thick] (startI.center) -- (endI.center);
   \fill[opacity = 0.1, black,rounded corners=5] (#4,-0.2) -- (#5, -0.2) -- (#5, 0.2) -- (#4, 0.2) -- cycle;
  #6
\end{scope}
}
\newcounter{ietstep}
\newcounter{ietstepP}
\newcounter{ietstepPP}
\newcommand\ietstepcounters{   \stepcounter{ietstep}    \stepcounter{ietstepP}    \stepcounter{ietstepPP}}


\begin{figure}[ht]
\centering
\begin{tikzpicture}[]
\newcommand*\ietalpha{44} 
\newcommand*\ietbeta{90} 
\newcommand*\ietgamma{17} 
\newcommand*\ietdelta{30} 
\newcommand*\ietA{25}
\newcommand*\ietsdist{3}
\newcommand*\ietpad{0.2}
\newcommand*\ietepsstep{1.5}

\setcounter{ietstep}{1}
\setcounter{ietstepP}{0}
\setcounter{ietstepPP}{-1}
\tikzset{disco/.style={color=gray}}
\tikzset{discoAl/.style={style=disco}}
\tikzset{discoBe/.style={style=disco}}
\tikzset{hlpo/.style={very thick}}
\tikzset{hlpoA/.style={yscale=1.2}}
\tikzset{tmap/.style={ultra thick,}}
\tikzset{tmapG/.style={thick,dashed,color=gray}}
 \begin{scope}[font=\footnotesize,scale=0.75,x=0.008\textwidth,inner xsep=0,inner ysep=3.5]
 \ietbasic[yshift={(1-\number\value{ietstep})*\ietsdist cm}]{\ietalpha}{\ietbeta}{\ietgamma}{\ietdelta}{
  \ietpoint[pos=0.3,above]{hlpoA}{\ietA}{$\a$}{pointA\arabic{ietstep}}
  \ietpoint[at={(\ietA-\ietepsstep-4,0.5)},anchor=south,inner ysep=0,inner xsep=0]{hlpo}{\ietA-\ietepsstep}{$\a-\varepsilon$}{pointAm\arabic{ietstep}}
   \draw[dotted,color=gray] (\ietA-\ietepsstep,0) -- (\ietA-\ietepsstep-4,0.5);
  \ietpoint[at={(\ietA+\ietepsstep+4,0.5)},anchor=south,inner ysep=0,inner xsep=0]{hlpo}{\ietA+\ietepsstep}{$\a+\varepsilon$}{pointAp\arabic{ietstep}}
     \draw[dotted,color=gray] (\ietA+\ietepsstep,0) -- (\ietA+\ietepsstep+4,0.5);
 }

  \ietstepcounters
  \tikzset{discoAl/.style={color=white}}
 \ietbasic[yshift={(1-\number\value{ietstep})*\ietsdist cm}]{\ietalpha}{\ietbeta}{\ietgamma}{\ietdelta}{
  \ietpoint[at={(\ietalpha+15,1)},inner ysep=0,inner xsep=0]{hlpoA}{\ietalpha}{$T^{k_\alpha}(\a) = \alpha$}{pointA\arabic{ietstep}}
  \draw[dotted,color=gray] (\ietalpha,0) -- (\ietalpha+15,1);
  \ietpoint[at={(\ietalpha-\ietepsstep,0)},anchor=south east]{hlpo}{\ietalpha-\ietepsstep}{$T^{k_\alpha}(\a-\varepsilon)$}{pointAm\arabic{ietstep}}
  \ietpoint[at={(\ietalpha+\ietepsstep,0)},anchor=north west]{hlpo}{\ietalpha+\ietepsstep}{$T^{k_\alpha}(\a+\varepsilon)$}{pointAp\arabic{ietstep}}
 }
 \draw[->,tmap] (pointAm\arabic{ietstepP}) to[out=-100,in=80]  node[pos=0.55,left,inner xsep=5] {$T^{k_\alpha}$} (pointAm\arabic{ietstep});
 \draw[->,tmap] (pointAp\arabic{ietstepP}) to[out=-100,in=80]  node[pos=0.4,right,inner xsep=10] {$T^{k_\alpha}$} (pointAp\arabic{ietstep});

   \ietstepcounters
     \tikzset{discoAl/.style={style=disco}}
    \ietbasic[yshift={(1-\number\value{ietstep})*\ietsdist cm},xshift=185]{\ietalpha}{\ietbeta}{\ietgamma}{\ietdelta}{
      \ietpoint[at={(100-\ietepsstep,0)},anchor=south east]{hlpo}{100-\ietepsstep}{$T^{k_\alpha+1}(\a-\varepsilon)$}{pointAm\arabic{ietstep}}
    }
    \draw[->,tmap] (pointAm\arabic{ietstepP}) to[out=-90,in=190] ($(pointAm\arabic{ietstepP})!0.5!(pointAm\arabic{ietstep})$) to[out=10,in=70]  node[pos=0.1,above] {$T$} (pointAm\arabic{ietstep});

   \ietstepcounters
  \ietbasic[yshift={(1-\number\value{ietstep})*\ietsdist cm},xshift=0]{\ietalpha}{\ietbeta}{\ietgamma}{\ietdelta}{
      \ietpoint[at={(100-\ietbeta-\ietepsstep,0)},anchor=south east]{hlpo}{100-\ietbeta-\ietepsstep}{$T^{k_\alpha+2}(\a-\varepsilon)$}{pointAm\arabic{ietstep}}
        \ietpoint[at={(100-\ietbeta+\ietepsstep,0)},anchor=south west]{hlpo}{100-\ietbeta+\ietepsstep}{$T^{k_\alpha+1}(\a+\varepsilon)$}{pointAp\arabic{ietstep}}
    }
            \draw[->,tmap] (pointAm\arabic{ietstepP}) to[out=-100,in=-10] ($(pointAm\arabic{ietstepP})!0.55!(pointAm\arabic{ietstep})$) to[out=170,in=90]  node[pos=0.1,above] {$T$} (pointAm\arabic{ietstep});
            \draw[->,tmap] (pointAp\arabic{ietstepPP}) to[out=-90,in=30] ($(pointAp\arabic{ietstepPP})!0.5!(pointAp\arabic{ietstep})$) to[out=-150,in=90]  node[pos=0.1,above] {$T$} (pointAp\arabic{ietstep});

   \ietstepcounters
  \ietbasic[yshift={(1-\number\value{ietstep})*\ietsdist cm},xshift=0]{\ietalpha}{\ietbeta}{\ietgamma}{\ietdelta}{
      \ietpoint[at={(\ietgamma+\ietdelta-\ietA-\ietepsstep,0)},anchor=south east]{hlpo}{\ietgamma+\ietdelta-\ietA-\ietepsstep}{$T^{t}(\a-\varepsilon)$}{pointAm\arabic{ietstep}}
      \ietpoint[at={(\ietgamma+\ietdelta-\ietA+\ietepsstep,0)},anchor=south west]{hlpo}{\ietgamma+\ietdelta-\ietA+\ietepsstep}{$T^{t-1}(\a+\varepsilon)$}{pointAp\arabic{ietstep}}
    }
 \draw[->,tmap] (pointAm\arabic{ietstepP}) to[out=-90,in=90]  node[pos=0.55,left,inner xsep=5] {$T^{t-k_\alpha-2}$} (pointAm\arabic{ietstep});
 \draw[->,tmap] (pointAp\arabic{ietstepP}) to[out=-90,in=90]  node[pos=0.45,right,inner xsep=5] {$T^{t-k_\alpha-2}$} (pointAp\arabic{ietstep});
\end{scope}
\end{tikzpicture}
\caption{Situation in the proof of Lemma~\ref{l:jakitiner}, case \ref{l:jakitiner_1}.}
\label{fig:jakitiner_1}.
\end{figure}
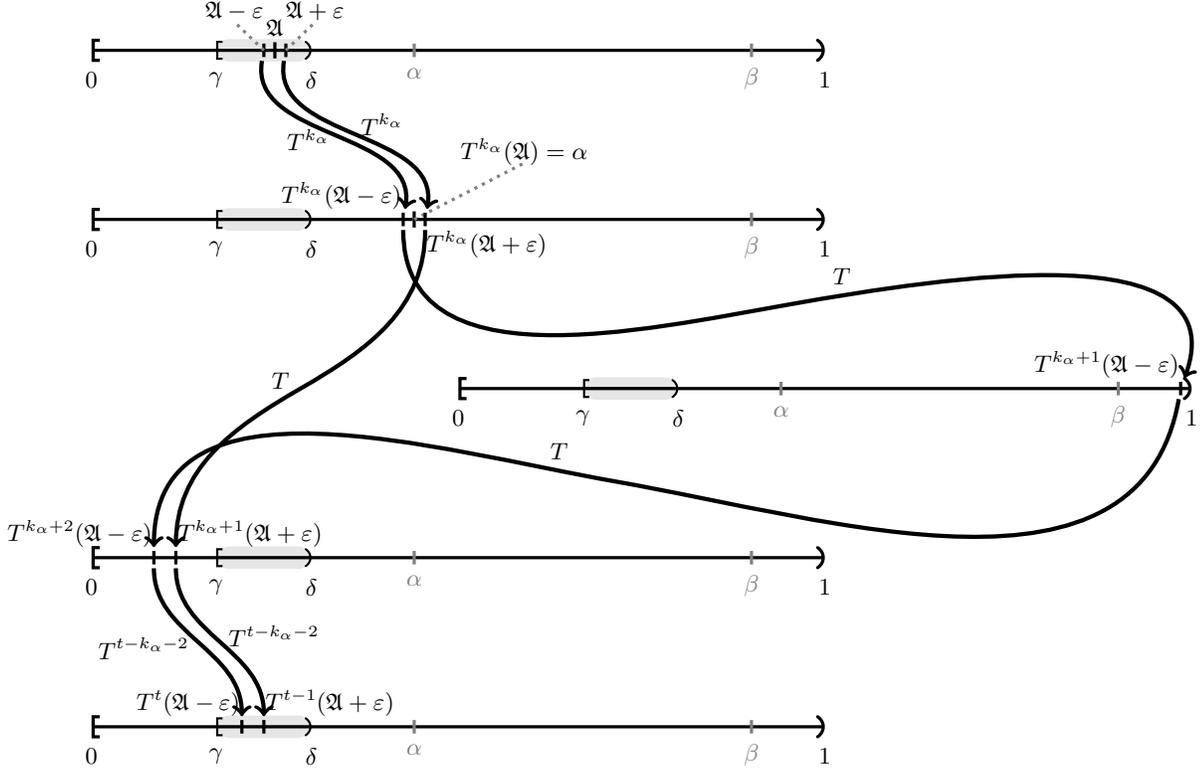

The whole situation is depicted in Figure~\ref{fig:jakitiner_1}.
From what is said above, we can write down the $I$-itineraries of points from $K$,
\[
R(x) = \begin{cases} wACv  & \text { if } x \in K_- ,\\
 wBv  & \text{ if } x \in K_+.\end{cases}
\]
This finishes the proof of \eqref{l:jakitiner_1}.

The claim in item \eqref{l:jakitiner_3} is analogous to \eqref{l:jakitiner_1}. Cases \eqref{l:jakitiner_2} and \eqref{l:jakitiner_4}
are derived from \eqref{l:jakitiner_1} and \eqref{l:jakitiner_3} by the use of equivalence~\eqref{eq:iff}.

\vspace{\baselineskip}
Let us now demonstrate the proof of the case \eqref{l:jakitiner_5}. Denote $s = \min\{n\in\Z^+ \colon T^n(\delta)\in I\}$.
Let $K = [\d-\varepsilon, \d+\varepsilon]$ with $\varepsilon$ chosen such that $K \subset I$ and
\begin{equation}\label{l:j_d1}
\alpha,\beta,\gamma,\delta \not \in T^i(K) \text{ for all }0 \leq i \leq k_\delta+s \text{ with the only exception of }T^{k_\delta}(\d) = \delta\,.
\end{equation}
The existence of such $\varepsilon$ follows trivially from the definition of the interval exchange transformation and the assumptions of the lemma.

Condition \eqref{l:j_d1} implies that $T^i(K)$ is an interval for all $i$ such that $0 < i \leq k_\delta+s$.
Moreover, $T^i(K) \cap I = \emptyset$ for all $i$ such that $0 < i <k_\delta$.
We obtain $T^{k_\delta}(K) \cap I = [\delta-\varepsilon,\delta)$.
In other words, the $I$-itineraries of all points of $K$ start with a prefix of length $k_\delta$ which is equal to $R(\d-\varepsilon)$.
Condition \eqref{l:j_d1} and the definition of $s$ implies that for all $i$ such that $k_\delta < i < s+k_\delta$ we have
$T^i(K)\subset J_X$ for some $X\in\{A,B,C\}$ and $T^i(K) \cap I = \emptyset$. Moreover, $T^{i}(K) \subset I$ for $i=k_\delta+s$.
Thus, the iterations of points of $T^{k_\delta}(K)=[\delta-\varepsilon,\delta)\cup T^{k_\delta}[\d,\d+\varepsilon]$
are coded by the same word of length $s$, namely $R(\delta-\varepsilon)$. Altogether,  we can conclude that the $I$-itinerary of points
in the interval $[\d, \d+\varepsilon]$ is equal to $R(\d-\varepsilon)R(\delta-\varepsilon)$.
The situation is depicted in Figure~\ref{fig:jakitiner_4}.

\begin{figure}[ht]
\centering
\begin{tikzpicture}[]
\newcommand*\ietalpha{25} 
\newcommand*\ietbeta{85} 
\newcommand*\ietgamma{33} 
\newcommand*\ietdelta{52} 
\newcommand*\ietA{41} 
\newcommand*\ietsdist{3}
\newcommand*\ietpad{0.2}
\newcommand*\ietepsstep{1.5}

\setcounter{ietstep}{1}
\setcounter{ietstepP}{0}
\setcounter{ietstepPP}{-1}

\tikzset{disco/.style={color=gray}}
\tikzset{discoAl/.style={style=disco}}
\tikzset{discoBe/.style={style=disco}}
\tikzset{hlpo/.style={very thick}}
\tikzset{hlpoA/.style={yscale=1.2}}
\tikzset{tmap/.style={ultra thick,}}
\tikzset{tmapG/.style={thick,dashed,color=gray}}

 \begin{scope}[font=\footnotesize,scale=0.75,x=0.008\textwidth,inner xsep=0,inner ysep=3.5]

 \ietbasic[yshift={(1-\number\value{ietstep})*\ietsdist cm}]{\ietalpha}{\ietbeta}{\ietgamma}{\ietdelta}{

  \ietpoint[pos=0.3,above]{hlpoA}{\ietA}{$\d$}{pointA\arabic{ietstep}}
  \ietpoint[at={(\ietA-\ietepsstep-4,0.5)},anchor=south,inner ysep=0,inner xsep=0]{hlpo}{\ietA-\ietepsstep}{$\d-\varepsilon$}{pointAm\arabic{ietstep}}
    \draw[dotted,color=gray] (\ietA-\ietepsstep,0) -- (\ietA-\ietepsstep-4,0.5);
  \ietpoint[at={(\ietA+\ietepsstep+4,0.5)},anchor=south,inner ysep=0,inner xsep=0]{hlpo}{\ietA+\ietepsstep}{$\d+\varepsilon$}{pointAp\arabic{ietstep}}
  \draw[dotted,color=gray] (\ietA+\ietepsstep,0) -- (\ietA+\ietepsstep+4,0.5);
 }

  \ietstepcounters

 \ietbasic[yshift={(1-\number\value{ietstep})*\ietsdist cm}]{\ietalpha}{\ietbeta}{\ietgamma}{\ietdelta}{

  \ietpoint[at={(\ietdelta-\ietepsstep,0)},anchor=south east]{hlpo}{\ietdelta-\ietepsstep}{$T^{k_\delta}(\d-\varepsilon)$}{pointAm\arabic{ietstep}}
  \ietpoint[at={(\ietdelta+\ietepsstep,0)},anchor=north west]{hlpo}{\ietdelta+\ietepsstep}{$T^{k_\delta}(\d+\varepsilon)$}{pointAp\arabic{ietstep}}
 }

 \draw[->,tmap] (pointAm\arabic{ietstepP}) to[out=-100,in=80]  node[pos=0.55,left,inner xsep=5] {$T^{k_\delta}$} (pointAm\arabic{ietstep});
 \draw[->,tmap] (pointAp\arabic{ietstepP}) to[out=-100,in=80]  node[pos=0.4,right,inner xsep=10] {$T^{k_\delta}$} (pointAp\arabic{ietstep});


   \ietstepcounters

  \ietbasic[yshift={(1-\number\value{ietstep})*\ietsdist cm},xshift=0]{\ietalpha}{\ietbeta}{\ietgamma}{\ietdelta}{
      \ietpoint[at={(\ietgamma+\ietdelta-\ietA-\ietepsstep,0)},anchor=south east]{hlpo}{\ietgamma+\ietdelta-\ietA-\ietepsstep}{$T^{t}(\delta-\varepsilon)$}{pointAm\arabic{ietstep}}
      \ietpoint[at={(\ietgamma+\ietdelta-\ietA+\ietepsstep,0)},anchor=south west]{hlpo}{\ietgamma+\ietdelta-\ietA+\ietepsstep}{$T^{t}(\delta+\varepsilon)$}{pointAp\arabic{ietstep}}
    }

 \draw[->,tmap] (pointAm\arabic{ietstepP}) to[out=-90,in=90]  node[pos=0.55,left,inner xsep=5] {$T^{t}$} (pointAm\arabic{ietstep});
 \draw[->,tmap] (pointAp\arabic{ietstepP}) to[out=-90,in=90]  node[pos=0.45,right,inner xsep=5] {$T^{t}$} (pointAp\arabic{ietstep});



\end{scope}
\end{tikzpicture}
\caption{Situation in the proof of Lemma~\ref{l:jakitiner}, case \ref{l:jakitiner_4}.}
\label{fig:jakitiner_4}.
\end{figure}
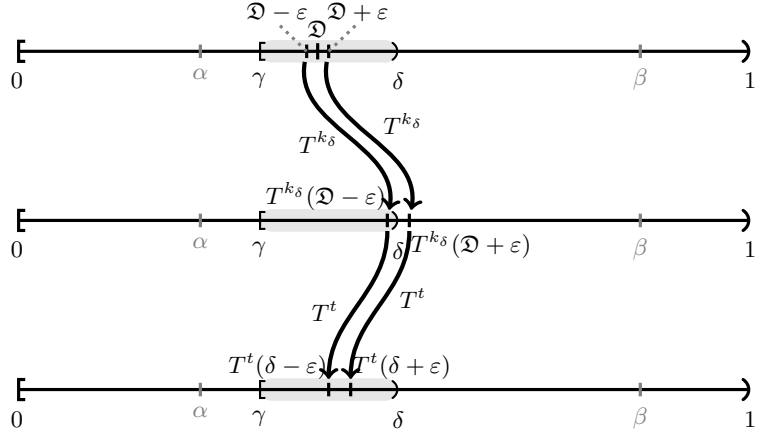

Case \eqref{l:jakitiner_6} can be treated in a way analogous to case \eqref{l:jakitiner_5}.
\pfk

Now we are in the state to prove the main theorem describing the return times in 3iet.
In the proof, it is sufficient to focus on the case when $\#{\rm It}_I=5$, since, as we have seen from
Proposition~\ref{p:spojitost}, the set of $I$-itineraries, and thus also their return times, for the other
 cases is only a subset of ${\rm It}_{\tilde{I}}$ for some ``close enough'' generic subinterval $\tilde{I}\subset[0,1)$.
So throughout the rest of this section, suppose that $\#{\rm It}_I=5$. This means by Lemma~\ref{l:keane} that points $\a, \b, \c, \d$ lie in the interior
of the interval $I=[\gamma, \delta)$ and are mutually distinct, moreover, by Proposition~\ref{p:bamape3iet}, we have $\d<\c$.
Such conditions imply 12 possible orderings of $\a,\b,\c,\d$ which give rise to 12 cases in the study of return times.
 We will describe them in the proof of Theorem~\ref{t:returntime}
as cases (i)--(xii) and then show in Example~\ref{ex:vsech12} that all 12 cases may occur.

\begin{remark} \label{re:sedi_na_kraji}
Note that if $\gamma=0$, i.e.\ we induce on an interval $I=[0,\delta)$, we have $T^{-1}(\gamma)=\beta$ and therefore necessarily $\b=\c$.
Thus there are at most four $I$-itineraries.
Due to Proposition~\ref{p:symetrie}, similar situation happens if $\delta=1$.
\end{remark}

\pfz[Proof of Theorem~\ref{t:returntime}]
We will discuss the 12 possibilities of ordering of points $\a,\b,\c,\d$ in the interior of the interval $[\gamma,\delta)$ with the condition $\d<\c$.
The structure of the set of $I$-itineraries will be best shown in terms of $I$-itineraries of points in the left neighbourhood of the point $\d$
and right neighbourhood of the point $\c$. For simplicity, we thus denote for sufficiently small positive~$\varepsilon$
\[
R_1=R(\d-\varepsilon),\ R_2=R(\c+\varepsilon)\quad\text{ and }\quad |R_1|=t_1, |R_2|=t_2.
\]

In order to be allowed to use Lemma~\ref{l:jakitiner}, we will assume that
the orbits of points $\alpha, \beta, \gamma$ and $\delta$ are mutually disjoint. Otherwise, we use
Proposition~\ref{p:spojitost} to find a modified interval $\tilde{I}$ where this is satisfied and ${\rm It}_{\tilde{I}}= {\rm It}_I$.
\begin{enumerate}[(i)]
\item Let $\a<\b<\d<\c$. We know that $R(x)$ is constant on the intervals $[\gamma,\a)$, $[\a,\b)$, $[\b,\d)$, $[\d,\c)$, and $[\c,\delta)$.
By definition $R(x)=R_2$ for $x\in [\c,\delta)$ and $R(x)=R_1$ for
$x\in[\b,\d)$. We can derive from rule (\ref{enum:D+}) of
Lemma~\ref{l:jakitiner} that if $x\in[\d,\c)$, then $R(x)=R_1R_2$.
Further, we use rule (\ref{enum:B-}) to show that
$R(x)=\omega_{CA\to B}(R_1)$ for $x\in[\a,\b)$ and further by
applying rule (\ref{enum:A-}), we obtain that $R(x)=\omega_{B\to
AC}\big(\omega_{CA\to B}(R_1)\big)$ for $x\in[\gamma,\a)$.
Summarized,
\[
R(x)=
\begin{cases}
\omega_{B\to AC}\big(\omega_{CA\to B}(R_1)\big) &\text{ for }x\in[\gamma,\a)\\
\omega_{CA\to B}(R_1) &\text{ for }x\in[\a,\b)\\
R_1&\text{ for }x\in[\b,\d)\\
R_1R_2 &\text{ for }x\in[\d,\c)\\
R_2 &\text{ for }x\in[\c,\delta).
\end{cases}
\]
It is easy to show that the lengths of the above $I$-itineraries are $t_1$, $t_1-1$, $t_1$, $t_1+t_2$, $t_2$, respectively.
Setting $r_1=t_1-1$ and $r_2=t_2$, we obtain the desired return times.

The proofs of the other cases are analogous, we state the results in terms of $R_1$ and $R_2$.

\item Let $\d<\c<\a<\b$. We obtain
\[
R(x)=
\begin{cases}
R_1  &\text{ for }x\in[\gamma,\d)\\
R_2R_1&\text{ for }x\in[\d,\c)\\
R_2&\text{ for }x\in[\c,\a)\\
\omega_{AC\to B}(R_2) &\text{ for }x\in[\a,\b)\\
\omega_{B\to CA}\big(\omega_{AC\to B}(R_2)\big) &\text{ for }x\in[\b,\delta),
\end{cases}
\]
with lengths $t_1$, $t_1+t_2$, $t_2$, $t_2-1$, $t_2$, respectively. We set $r_1=t_1$ and $r_2=t_2-1$.

\item Let $\b<\a<\d<\c$. A discussion as above leads to
\[
R(x)=
\begin{cases}
\omega_{CA\to B}\big(\omega_{B\to AC}(R_1)\big) &\text{ for }x\in[\gamma,\b)\\
\omega_{B\to AC}(R_1) &\text{ for }x\in[\b,\a)\\
R_1&\text{ for }x\in[\a,\d)\\
R_1R_2 &\text{ for }x\in[\d,\c)\\
R_2 &\text{ for }x\in[\c,\delta),
\end{cases}
\]
 with the corresponding lengths $t_1$, $t_1+1$, $t_1$, $t_1+t_2$, $t_2$, respectively. We set $r_1=t_1$ and $r_2=t_2$.

\item Let $\d<\c<\b<\a$. We obtain
\[
R(x)=
\begin{cases}
R_1  &\text{ for }x\in[\gamma,\d)\\
R_2R_1&\text{ for }x\in[\d,\c)\\
R_2&\text{ for }x\in[\c,\b)\\
\omega_{B\to CA}(R_2) &\text{ for }x\in[\b,\a)\\
\omega_{AC\to B}\big(\omega_{B\to CA}(R_2)\big) &\text{ for }x\in[\a,\delta),
\end{cases}
\]
with lengths $t_1$, $t_1+t_2$, $t_2$, $t_2+1$, $t_2$, respectively. We set $r_1=t_1$ and $r_2=t_2$.

\item Let $\a<\d<\b<\c$. We obtain
\[
R(x)=
\begin{cases}
\omega_{B\to AC}(R_1)  &\text{ for }x\in[\gamma,\a)\\
R_1&\text{ for }x\in[\a,\d)\\
R_1R_2&\text{ for }x\in[\d,\b)\\
R_2\omega_{B\to AC}(R_1) &\text{ for }x\in[\b,\c)\\
R_2 &\text{ for }x\in[\c,\delta),
\end{cases}
\]
 with lengths $t_1+1$, $t_1$, $t_1+t_2$, $t_1+t_2+1$, $t_2$, respectively. We set $r_1=t_1$ and $r_2=t_2$.

\item Let $\d<\a<\c<\b$. We obtain
\[
R(x)=
\begin{cases}
R_1  &\text{ for }x\in[\gamma,\d)\\
R_1\omega_{B\to CA}(R_2)&\text{ for }x\in[\d,\a)\\
R_2R_1&\text{ for }x\in[\a,\c)\\
R_2 &\text{ for }x\in[\c,\b)\\
\omega_{B\to CA}(R_2) &\text{ for }x\in[\b,\delta),
\end{cases}
\]
with lengths $t_1$, $t_1+t_2+1$, $t_1+t_2$, $t_2$, $t_2+1$, respectively. We set $r_1=t_1$ and $r_2=t_2$.

\item Let $\b<\d<\a<\c$. We obtain
\[
R(x)=
\begin{cases}
\omega_{CA\to B}(R_1)  &\text{ for }x\in[\gamma,\b)\\
R_1&\text{ for }x\in[\b,\d)\\
R_1R_2&\text{ for }x\in[\d,\a)\\
R_2\omega_{CA\to B}(R_1) &\text{ for }x\in[\a,\c)\\
R_2 &\text{ for }x\in[\c,\delta),
\end{cases}
\]
 with lengths $t_1-1$, $t_1$, $t_1+t_2$, $t_1+t_2-1$, $t_2$, respectively. We set $r_1=t_1-1$ and $r_2=t_2$.

\item Let $\d<\b<\c<\a$. We obtain
\[
R(x)=
\begin{cases}
R_1  &\text{ for }x\in[\gamma,\d)\\
R_1\omega_{AC\to B}(R_2)&\text{ for }x\in[\d,\b)\\
R_2R_1&\text{ for }x\in[\b,\c)\\
R_2 &\text{ for }x\in[\c,\a)\\
\omega_{AC\to B}(R_2) &\text{ for }x\in[\a,\delta),
\end{cases}
\]
with lengths $t_1$, $t_1+t_2-1$, $t_1+t_2$, $t_2$, $t_2-1$, respectively. We set $r_1=t_1$ and $r_2=t_2-1$.

\item Let $\a<\d<\c<\b$. We obtain
\[
R(x)=
\begin{cases}
\omega_{B\to AC}(R_1)  &\text{ for }x\in[\gamma,\a)\\
R_1&\text{ for }x\in[\a,\d)\\
R_1\omega_{B\to CA}(R_2)&\text{ for }x\in[\d,\c)\\
R_2 &\text{ for }x\in[\c,\b)\\
\omega_{B\to CA}(R_2) &\text{ for }x\in[\b,\delta),
\end{cases}
\]
with lengths $t_1+1$, $t_1$, $t_1+t_2+1$, $t_2$, $t_2+1$, respectively. We set $r_1=t_1$ and $r_2=t_2$.

\item \label{t12:bdca} Let $\b<\d<\c<\a$. We obtain
\[
R(x)=
\begin{cases}
\omega_{CA\to B}(R_1)  &\text{ for }x\in[\gamma,\b)\\
R_1&\text{ for }x\in[\b,\d)\\
R_1\omega_{AC\to B}(R_2)&\text{ for }x\in[\d,\c)\\
R_2 &\text{ for }x\in[\c,\a)\\
\omega_{AC\to B}(R_2) &\text{ for }x\in[\a,\delta),
\end{cases}
\]
 with lengths $t_1-1$, $t_1$, $t_1+t_2-1$, $t_2$, $t_2-1$, respectively. We set $r_1=t_1-1$ and $r_2=t_2-1$.

\item Let $\d<\a<\b<\c$. We obtain
\[
R(x)=
\begin{cases}
R_1  &\text{ for }x\in[\gamma,\d)\\
R_1R_2&\text{ for }x\in[\d,\a)\\
\omega_{CA\to B}(R_2R_1)&\text{ for }x\in[\a,\b)\\
R_2R_1&\text{ for }x\in[\b,\c)\\
R_2 &\text{ for }x\in[\c,\delta),
\end{cases}
\]
 with lengths $t_1$, $t_1+t_2$, $t_1+t_2-1$, $t_1+t_2$, $t_2$, respectively. We set $r_1=t_1-1$ and $r_2=t_2$.

\item Let $\d<\b<\a<\c$. We obtain
\[
R(x)=
\begin{cases}
R_1  &\text{ for }x\in[\gamma,\d)\\
R_1R_2&\text{ for }x\in[\d,\b)\\
\omega_{B\to AC}(R_2R_1)&\text{ for }x\in[\b,\a)\\
R_2R_1&\text{ for }x\in[\a,\c)\\
R_2 &\text{ for }x\in[\c,\delta),
\end{cases}
\]
 with lengths $t_1$, $t_1+t_2$, $t_1+t_2+1$, $t_1+t_2$, $t_2$, respectively. We set $r_1=t_1$ and $r_2=t_2$.
\end{enumerate}
\pfk

\begin{remark}
When describing the $I$-itineraries using the words $R_1$, $R_2$, we could apply the rules of Lemma~\ref{l:jakitiner} in a different order.
By doing so, we would obtain the itineraries expressed differently, which yields interesting relations between words $R_1,R_2$.
For example, in the case (ix), we derive that the $I$-itinerary of $x\in[\d,\c)$ is $R(x)=R_1\omega_{B\to CA}(R_2)=R_2\omega_{B\to AC}(R_1)$.

Note also the symmetries between the cases (i) and (ii), (iii) and (iv), (v) and (vi), (vii) and (viii), in
consequence of Proposition \ref{p:symetrie}. Indeed, if
  we exchange  pair of points $ \d \leftrightarrow\c$,     $ \b \leftrightarrow\a$,  letters
   $A \leftrightarrow C $, and finally the inequalities $<$ and $>$, we obtain a symmetric
    situation in the list of  cases we discussed in the proof. In this sense, each of  cases  (ix) up to (xii) is symmetric to itself.
\end{remark}

\begin{example} \label{ex:vsech12}
Set $\alpha = \frac{1}{5} \sqrt{5} - \frac{1}{5}$ and $\beta = -\frac{1}{6} \sqrt{5} + \frac{2}{3}$.
Table~\ref{tab:vsech12} shows $12$ choices of $I = [\gamma, \delta)$ which produce $12$ distinct orders
of the points $\a$, $\b$, $\c$ and $\d$, shown in the third column. The last column contains the respective lengths of the $5$ distinct $I$-itineraries.

\vspace{\baselineskip}
\renewcommand*{\arraystretch}{1.5}
\begin{table}
\centering
\begin{tabular}{l|c|c|c|c|c|c|c|c|c|c}
$\gamma$ & $\delta$ & type & lengths \\ \hline
$ \frac{6}{25} $ & $ \frac{99}{100} $ &
 $ \a < \b < \d < \c $ & $ \left[2, 1, 2, 3, 1\right] $ \\
 $ \frac{29}{100} $ & $ \frac{71}{100} $ &
 $ \d < \c < \a < \b $ & $ \left[1, 15, 14, 13, 14\right] $ \\
 $ \frac{77}{100} $ & $ \frac{4}{5} $ &
 $ \b < \a < \d < \c $ & $ \left[88, 89, 88, 109, 21\right] $ \\
$ \frac{7}{25} $ & $ \frac{3}{4} $ &
 $ \d < \c < \b < \a $ & $ \left[1, 13, 12, 13, 12\right] $ \\
 $ \frac{1}{100} $ & $ \frac{3}{4} $ &
 $ \a < \d < \b < \c $ & $ \left[2, 1, 2, 3, 1\right] $ \\
$ \frac{1}{100} $ & $ \frac{29}{100} $ &
 $ \d < \a < \c < \b $ & $ \left[2, 14, 13, 11, 12\right] $ \\
$ \frac{1}{4} $ & $ \frac{99}{100} $ &
 $ \b < \d < \a < \c $ & $ \left[1, 2, 3, 2, 1\right] $ \\
$ \frac{71}{100} $ & $ \frac{99}{100} $ &
 $ \d < \b < \c < \a $ & $ \left[2, 13, 14, 12, 11\right] $ \\
$ \frac{1}{25} $ & $ \frac{37}{50} $ &
 $ \a < \d < \c < \b $ & $ \left[2, 1, 4, 2, 3\right] $ \\
$ \frac{29}{100} $ & $ \frac{99}{100} $ &
 $ \b < \d < \c < \a $ & $ \left[1, 2, 4, 3, 2\right] $ \\
$ \frac{1}{100} $ & $ \frac{99}{100} $ &
 $ \d < \a < \b < \c $ & $ \left[1, 2, 1, 2, 1\right] $ \\
$ \frac{1}{4} $ & $ \frac{3}{4} $ &
 $ \d < \b < \a < \c $ & $ \left[1, 12, 13, 12, 11\right] $ \\
\end{tabular}
\caption{The cases (i)--(xii) from the proof of Theorem~\ref{t:returntime} for $\alpha = \frac{1}{5} \sqrt{5} - \frac{1}{5}$, $\beta = -\frac{1}{6} \sqrt{5} + \frac{2}{3}$ as in Example~\ref{ex:vsech12}.
The endpoints of the interval $I=[\gamma,\delta)$ are in the first and second column.
The last column contains a list of lengths of $I$-itineraries of all $5$ subintervals of $I$ starting from the leftmost one.
}
\label{tab:vsech12}
\end{table}

Let us describe in detail one of the cases, namely the case $\b < \d < \c < \a$.
The induced interval is determined by setting $\gamma = \frac{29}{100}$ and $\delta = \frac{99}{100}$.
One can verify that
\begin{eqnarray*}
\b = T^{-0}(\beta) = \frac{1}{6} \sqrt{5} + \frac{1}{3} \approx 0.706011329583298; \\
\d = T^{-2}(\delta) \frac{11}{30} \sqrt{5} + \frac{37}{300} \approx 0.943224925083256; \\
\c = T^{-3}(\gamma) = \frac{8}{15} \sqrt{5} - \frac{73}{300} \approx 0.949236254666554; \\
\a = T^{-1}(\alpha) = \frac{11}{30} \sqrt{5} + \frac{2}{15} \approx 0.953224925083256.
\end{eqnarray*}

It corresponds to the case {\rm (\ref{t12:bdca})} in the proof of Theorem~\ref{t:returntime} with $R_1 = CA$ and $R_2 = CAC$.
The $I$-itinerary of a point $x \in I = [\gamma,\delta)$ is
\[
R(x)=
\begin{cases}
B &\text{ for }x\in[\gamma,\b)\\
CA &\text{ for }x\in[\b,\d)\\
CACB &\text{ for }x\in[\d,\c)\\
CAC &\text{ for }x\in[\c,\a)\\
CB &\text{ for }x\in[\a,\delta)
\end{cases}
\]
\end{example}

\section{Gaps and distance theorems}\label{sec:Gap}

Let us reinterpret  the statement of
Theorem \ref{t:returntime} in point of view of three gap and  three distance theorems which are narrowly connected with exchange of two
intervals. Under the name three gap theorem one usually refers to the
description of gaps between neighbouring elements of the set
\[
{\mathcal G}(\alpha,\delta) := \big\{n\in\N \colon \{n\alpha\} < \delta \big\} \subset \N\,,
\]
where $\alpha\in \R\setminus\Q$, $\delta\in(0,1)$ and
$\{x\}=x-\lfloor x\rfloor$ stands for the fractional part of $x$,
see~\cite{3gap}. Sometimes one uses a more general formulation, namely
the set
\[
{\mathcal G}(\alpha,\rho,\gamma,\delta) := \big\{n\in\N : \gamma \leq \{n\alpha+\rho\} < \delta \big\} \subset \N\,,
\]
where moreover $\rho\in\N$, $0\leq \gamma<\delta< 1$. The three gap
theorem states that there exist integers
$r_1, r_2$ such that gaps between
neighbours in ${\mathcal G}(\alpha,\rho,\gamma,\delta)$ take at most
three values, namely in the set $\{r_1,r_2,r_1+r_2\}$.

Let us interpret the three gap theorem in the frame of exchange of two
intervals $J_0=[0,1-\alpha)$, $J_1=[1-\alpha,1)$. The transformation
$T:[0,1)\to[0,1)$ is of the form
\[
T(x) = \begin{cases}
x+\alpha & \text{ for }x\in[0,1-\alpha)\,,\\
x+\alpha-1 & \text{ for }x\in[1-\alpha,1)\,,
\end{cases}
\quad\text{i.e. }\quad
T(x) = \{x+\alpha\}\,.
\]
Therefore we can write
\begin{equation}\label{eq:3gapsturmian}
{\mathcal G}(\alpha,\rho,\gamma,\delta) := \big\{n\in\N \colon T^n(\rho)\in[\gamma,\delta)\big\}\,,
\end{equation}
and the gaps in this set correspond to return times to the interval $[\gamma,\delta)$ under the transformation $T$.

Our Theorem~\ref{t:returntime} is an analogue of the three gap
theorem in the form~\eqref{eq:3gapsturmian} generalized for the case
when the transformation $T$ is a non-degenerate 3iet. We see that
there are 5 gaps, but still expressed using two basic values
$r_1,r_2$.

The so-called three distance theorem focuses on distances between neighbours of the set
\[
{\mathcal D}(\alpha,\rho,N):=\big\{ \{\alpha n + \rho\} \colon n\in\N,\ n<N \big\} \subset [0,1)\,.
\]
The three distance theorem ensures existence of $\Delta_1,\Delta_2>0$
such that distances between neighbours in ${\mathcal
D}(\alpha,\rho,N)$ take at most three values, namely in
$\{\Delta_1,\Delta_2,\Delta_1+\Delta_2\}$.

In the framework of 2iet $T$, we can write for the distances
\begin{equation}\label{eq:3distancesturmian}
{\mathcal D}(\alpha,\rho,N):=\big\{  T^n(\rho) \colon n\in\N,\ n<N \big\} \subset [0,1)\,.
\end{equation}

We could try to study the analogue of the three distance theorem in
the form~\eqref{eq:3distancesturmian} for exchanges of three intervals. In fact, it can be derived
from the results of~\cite{GuMaPeBordeaux} that if $T$ is a 3iet with
discontinuity points $\alpha,\beta$, then
\begin{equation*}
{\mathcal D}(\alpha,\beta,\rho,N):=\big\{  T^n(\rho) \colon n\in\N,\ n<N \big\}
\end{equation*}
has again at most three distances $\Delta_1$, $\Delta_2$, and $\Delta_1+\Delta_2$ for some positive $\Delta_1,\Delta_2$.

The three distance theorem can also be used to derive that the frequencies of factors of length $n$ in a Sturmian word take at most three values.
Recall that the frequency of a factor $w$ in the infinite word ${\bf u}= u_0u_1u_2\ldots$ is given by
\[
{\rm freq}(w):=\lim_{N\to\infty}\frac1N\big(\#\{0\leq i< N \colon w \text{ is a prefix of } u_iu_{i+1}\ldots \}\big)\,,
\]
if the limit exists.

It is a well known fact that the frequencies of factors of length
$n$ in a coding of an exchange of intervals are given by the lengths
of cylinders corresponding to the factors.
The boundary points of these cylinders are $T^{-j}(1-\alpha)$, for
$j=0,\dots,n-1$. Consequently, the
distances in the set ${\mathcal D}(\alpha,1-\alpha,N)$ are precisely
the frequencies of factors, and the three distance theorem implies the
well known fact that Sturmian words have for each $n$ only three
values of frequencies of factors of length $n$, namely
$\varrho_1,\varrho_2,\varrho_1+\varrho_2$.

The frequencies of factors of length $n$ in 3iet words are given by
distances between neighbours of the set
\begin{equation*}
\big\{  T^{-n}(\alpha) \colon n\in\N,\ n<N \big\}\cup \big\{  T^{-n}(\beta) \colon n\in\N,\ n<N \big\}\,.
\end{equation*}
In~\cite{BaPe} it is shown, based on the study of Rauzy graphs, that
the number of distinct values of frequencies in infinite words with
reversal closed language satisfies
\[
\#\{{\rm freq}(w) \colon w\in{\mathcal L}({\bf u}), |w|=n\} \leq 2 \Bigl({\mathcal C}_{\bf u}(n) - {\mathcal C}_{\bf u}(n-1)\Bigr) + 1\,,
\]
which in case of 3iet words reduces to $\leq 5$.
Paper~\cite{ferenczi} shows, that the set of integers $n$ for which
this bound is achieved, is of density 1 in $\N$.

\section{Description of the case of three $I$-itineraries} \label{sec:3}

The cases (i) -- (xii) in the proof of Theorem~\ref{t:returntime} correspond to the generic
 instances of a subinterval $I$ in a non-degenerate 3iet which lead to
5 different $I$-itineraries. Let us focus on the cases where, on the contrary, the set of
 $I$-itineraries has only 3 elements.
First we recall two reasons why such cases are interesting.

  For a factor $w$ from the language of a non-degenerate 3iet transformation $T$, denote
\[
[w]=\{\rho \in [0,1 ) \colon  w \text{ is a prefix of }  {\bf u_\rho}\}\,.
\]
It is easy to see that $[w]$ -- usually called the cylinder of $w$ -- is a semi-closed interval and its
 boundaries belong to the set $ \{ T^{-i}(z) \colon 0 \leq i < n, z \in
\{\alpha, \beta\}\}$. Clearly, a factor $v$ is a return word to the factor $w$ if and only if
 $v$ is a $[w]$-itinerary. It is well known \cite{vuillon} that any factor of an  infinite word coding a
  non-degenerate 3iet has exactly three return words and thus the set $It_{[w]}$ has three elements.

The second  reason why to study intervals  $I$  yielding
 three $I$-itineraries is that any morphism fixing a non-degenerate 3iet word corresponds
to such an interval $I$. Details of this correspondence  will be explained  in Section~\ref{sec:substitutions}.

\begin{proposition}\label{p:3itinerare}
Let $T$ be a non-degenerate 3iet and let $I=[\gamma,\delta)\subset [0,1)$ be such that $\#{\rm It}_I = 3$. One of the following cases occurs:
\begin{enumerate}[(i)]
\item \label{l:jakitiner_3rw_1} $\b = \d < \a=\c$ and
\[
R(x) =
\begin{cases}
R_1  &\text{ for }x\in[\gamma,\b)\\
\omega_{B\to CA}(R_1R_2) = \omega_{B\to AC}(R_2R_1) & \text{ for } x\in[\b,\a)\\
R_2 &\text{ for }x\in[\a,\delta)
\end{cases}
\]
\item \label{l:jakitiner_3rw_2} $\a = \d < \b=\c$ and
\[
R(x) =
\begin{cases}
R_1  &\text{ for }x\in[\gamma,\a)\\
\omega_{AC\to B}(R_1R_2) = \omega_{CA\to B}(R_2R_1) & \text{ for } x\in[\a,\b)\\
R_2 &\text{ for }x\in[\b,\delta)
\end{cases}
\]
\item \label{l:jakitiner_3rw_3} $\b < \a=\c=\d$ and
\[
R(x) =
\begin{cases}
\omega_{CA\to B}(R_1)  &\text{ for }x\in[\gamma,\b)\\
R_1 & \text{ for } x\in[\b,\a)\\
R_2 &\text{ for }x\in[\a,\delta)
\end{cases}
\]
\item \label{l:jakitiner_3rw_6} $\b=\c=\d < \a$ and
\[
R(x) =
\begin{cases}
R_1 &\text{ for }x\in[\gamma,\a)\\
R_2 & \text{ for } x\in[\a,\b)\\
\omega_{AC\to B}(R_2)  &\text{ for }x\in[\b,\delta)
\end{cases}
\]
\item \label{l:jakitiner_3rw_4} $\a=\c=\d < \b$ and
\[
R(x) =
\begin{cases}
R_1 &\text{ for }x\in[\gamma,\a)\\
R_2 & \text{ for } x\in[\a,\b)\\
\omega_{B\to CA}(R_2)  &\text{ for }x\in[\b,\delta)
\end{cases}
\]
\item \label{l:jakitiner_3rw_5} $\a < \b=\c=\d$ and
\[
R(x) =
\begin{cases}
\omega_{B\to AC}(R_1)  &\text{ for }x\in[\gamma,\a)\\
R_1 & \text{ for } x\in[\a,\b)\\
R_2 &\text{ for }x\in[\b,\delta)
\end{cases}
\]

\end{enumerate}
\end{proposition}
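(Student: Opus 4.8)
The plan is to derive Proposition~\ref{p:3itinerare} from Theorem~\ref{t:returntime} and Lemma~\ref{l:jakitiner} by a degeneration argument: the six cases listed here are exactly the limits of the twelve generic cases (i)--(xii) when two of the points $\a,\b,\c,\d$ are made to coincide, or when one of them is pushed out of the open interval $(\gamma,\delta)$. First I would recall, using Lemma~\ref{l:keane} and Proposition~\ref{p:bamape3iet}, that when $\#{\rm It}_I=3$ the multiset $\{\a,\b,\c,\d\}\cap(\gamma,\delta)$ together with the possible coincidences must leave exactly two ``interior cut points''; since always $\d\le\c$, a short case analysis on which of $\a,\b$ equals which of $\c,\d$ (and which, if any, falls on an endpoint of $I$, cf.\ Remark~\ref{re:sedi_na_kraji}) shows that precisely the six configurations $\b=\d<\a=\c$, $\a=\d<\b=\c$, $\b<\a=\c=\d$, $\b=\c=\d<\a$, $\a=\c=\d<\b$, $\a<\b=\c=\d$ can arise. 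This enumeration is the combinatorial backbone of the proof.

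Next, for each of the six configurations I would run the same kind of computation as in the proof of Theorem~\ref{t:returntime}: set $R_1=R(\d-\varepsilon)$ and $R_2=R(\c+\varepsilon)$ for small $\varepsilon>0$ (after possibly replacing $I$ by a nearby $\tilde I$ with $\alpha,\beta,\gamma,\delta$ on disjoint orbits, via Proposition~\ref{p:spojitost}), and propagate these itineraries to the remaining subintervals using the six rules of Lemma~\ref{l:jakitiner}. For instance in case~\eqref{l:jakitiner_3rw_3}, $\b<\a=\c=\d$: the rightmost piece $[\a,\delta)$ carries $R_2$ by definition of $R_2$ and the coincidence $\a=\c$; the middle piece $[\b,\a)$ carries $R_1$ since $\d=\a$ puts $R_1$ immediately to the left of $\a$; and rule~(\ref{enum:B-}) of Lemma~\ref{l:jakitiner} applied across $\b$ gives $R(x)=\omega_{CA\to B}(R_1)$ on $[\gamma,\b)$. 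The cases \eqref{l:jakitiner_3rw_6}, \eqref{l:jakitiner_3rw_4}, \eqref{l:jakitiner_3rw_5} are handled symmetrically, using rules (\ref{enum:A+})--(\ref{enum:B+}); the two ``central'' cases \eqref{l:jakitiner_3rw_1} and \eqref{l:jakitiner_3rw_2} require propagating across a single point where \emph{two} rules have to be combined, which produces the middle itinerary $\omega_{B\to CA}(R_1R_2)$ (resp.\ $\omega_{AC\to B}(R_1R_2)$), exactly as in the remark following the proof of Theorem~\ref{t:returntime}.

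The one genuinely nontrivial point is the double description of the central itinerary in \eqref{l:jakitiner_3rw_1} and \eqref{l:jakitiner_3rw_2}, i.e.\ the identity $\omega_{B\to CA}(R_1R_2)=\omega_{B\to AC}(R_2R_1)$ and its mirror. I would get this the way it is gotten for case~(ix) in the post-Theorem remark: the middle interval $[\b,\a)$ can be reached either by first crossing $\d$ from the left (applying rule (\ref{enum:D+}) to pass from $R_1$ to $R_1R_2$, then rule (\ref{enum:B+}) across $\b$) or by first crossing $\c$ from the right (rule (\ref{enum:C-}) giving $R_2R_1$, then a rule across $\a$), and the two computations must agree because they describe the itinerary of the \emph{same} subinterval. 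Concretely one checks that the unique $B$ created in $R_1R_2$ by the $B\to CA$ substitution sits in the same position as the $B$ consumed from $R_2R_1$ by the $AC\to B$ substitution; this is where the coincidence $\b=\d$ (so that $R_1$ ends just before the cut at $\b$) is used. I expect this bookkeeping — verifying that the substitution is applied to the correct occurrence of $B$ (resp.\ $AC$) so that both expressions denote the same word — to be the main obstacle, though it is essentially a repetition of the argument already given for case~(ix). The remaining verifications (that the six listed configurations are the only ones, and that in each the three subintervals are non-empty) follow from Lemma~\ref{l:keane}, Proposition~\ref{p:bamape3iet} and Remark~\ref{re:sedi_na_kraji}.
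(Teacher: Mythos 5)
Your proposal is correct and follows essentially the same route as the paper's own (sketched) proof: use Lemma~\ref{l:keane} and Proposition~\ref{p:bamape3iet} to reduce to $\#\{\a,\b,\c,\d\}=2$ with $\d\leq\c$ and enumerate the six coincidence patterns, then propagate $R_1=R(\d-\varepsilon)$ and $R_2=R(\c+\varepsilon)$ across the cut points by the rules of Lemma~\ref{l:jakitiner}, handling the doubled rule application (and the resulting identity $\omega_{B\to CA}(R_1R_2)=\omega_{B\to AC}(R_2R_1)$) in cases (i)--(ii). The one step you leave implicit is that exhaustiveness of the six configurations requires $\a\neq\b$ (otherwise patterns such as $\a=\b$ paired with $\c=\d$ would also occur); the paper dispatches this in one sentence from non-degeneracy (i.d.o.c.), and your split ``which of $\a,\b$ equals which of $\c,\d$'' silently presupposes it.
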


\begin{proof}[Sketch of a proof]
Since by Lemma~\ref{l:keane} the subintervals
of $I$ corresponding to the same itinerary are delimited by the points $\a,\b,\c$ and $\d$,
we may have $\#{\rm It}_I=3$ only if some of these points coincide, more precisely if  $\# \{\a,\b,\c,\d\} = 2$.
The non-degeneracy of the considered 3iet implies that always $\a\neq\b$, which further limits the discussion.

The six cases listed in the statement are the possibilities of how this may happen,
 respecting the condition $\d < \c$ or  $\d = \c$.
In order to describe the itineraries, denote again
\[
R_1=R(\d-\varepsilon) \ \text{ and } \ R_2=R(\c+\varepsilon)
\]
for $\varepsilon>0$ sufficiently small.
One can then follow the ideas of the proof of Lemma~\ref{l:jakitiner}.
\end{proof}

%
%


Let us apply Proposition~\ref{p:3itinerare} in order to provide the description of
 return words to factors of a non-degenerate 3iet word. If a factor $w$ has a unique right prolongation in the language $\mathcal{L}(T)$,
  i.e. there exists only one letter $a \in \mathcal{A}$ such that $wa \in\mathcal{L}(T)$, then the set of
   return words to $w$  and the set of return words to $wa$ coincide. And (almost) analogously, if a
   factor $w$ has a unique left prolongation in the language $ \mathcal{L}(T)$, say $aw$ for some $a \in \mathcal{A}$,
     then a word $v$ is a return word to $w$ if and only if $ava^{-1}$ is a return word to  $aw$. Consequently,
      to describe the structure of return words to a given factor $w$, we can restrict to factors which have at
       least two right and at least two left prolongations. Such factors are called {\it bispecial}.  It is readily
       seen that the language of an aperiodic recurrent infinite word ${\bf u}$ contains infinitely many bispecial factors.
Before giving the description of return words to bispecial factors, we state the following lemma.

\begin{lemma} \label{l:zrcadlovy_cylindr}
Let $w$ belong to language  of  a non-degenerate 3iet $T$. Denote $n=|w|$. For the cylinder of its reversal $\overline{w}$, one has
\[
[\overline{w}] = \overline{T^{n}\left( [w]\right)}\,.
\]
\end{lemma}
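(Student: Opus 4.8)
The statement relates the cylinder of a factor $w$ of length $n$ to the cylinder of its reversal $\overline{w}$ via the symmetric structure of the 3iet. My plan is to recall that $[w]$ is exactly the set of intercepts $\rho$ for which $w$ is a prefix of ${\bf u}_\rho$, i.e.\ $T^i(\rho)\in J_{w_i}$ for $0\le i<n$. The set $\{\rho: T^i(\rho)\in J_{w_i}\text{ for }0\le i<n\}$ is a semi-closed interval whose endpoints lie among the points $T^{-i}(\alpha),T^{-i}(\beta)$ for $0\le i<n$, as already noted before the lemma. I want to describe the analogous set for $\overline{w}=w_{n-1}w_{n-2}\cdots w_0$ and show it equals $\overline{T^n([w])}$.

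The key computation uses the reflection identity from Proposition~\ref{p:symetrie}, namely that on the set $S$ (the complement of the orbits of discontinuities) one has $T^{-i}(y)=1-T^i(1-y)$ for all $i\ge 1$, together with \eqref{zaTabuli}, which gives $T(J_X)=\overline{J_X}$, so that $y\in J_X\Leftrightarrow 1-y\in T(J_X)\Leftrightarrow T^{-1}(1-y)\in J_X$ and, more generally, $T^i(\rho)\in J_X\Leftrightarrow T^{-i-1}(1-\rho)\in J_X$ (this is essentially \eqref{eq:ramec2} shifted by one index). First I would take $\rho\in[w]\cap S$ and set $\sigma:=1-T^n(\rho)$, so that $\sigma\in\overline{T^n([w])}$. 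Then I would verify that $\sigma$ has $\overline{w}$ as a prefix of ${\bf u}_\sigma$: writing $\sigma=T^{-n}(1-\rho)$ by the reflection identity, we get $T^j(\sigma)=T^{-(n-j)}(1-\rho)=1-T^{n-j}(\rho)$ for $0\le j<n$, and then the equivalence $T^{n-j-1}(\rho)\in J_X\Leftrightarrow T^{-(n-j)}(1-\rho)\in J_X$ shows $T^j(\sigma)\in J_{w_{n-1-j}}$ for $0\le j<n$, i.e.\ $\overline{w}$ is a prefix of ${\bf u}_\sigma$. Hence $\sigma\in[\overline{w}]$, which gives $\overline{T^n([w])}\cap S\subseteq[\overline{w}]$.

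To finish, I would note $T^n$ restricted to the interval $[w]$ is a translation (since all points of $[w]$ share the same first $n$ coded letters, their orbit follows the same affine branches of $T,T^2,\dots,T^{n-1}$), so $T^n([w])$ is a semi-closed interval of the same length as $[w]$, and therefore $\overline{T^n([w])}$ is a semi-closed interval as well; similarly $[\overline{w}]$ is a semi-closed interval. Since the two semi-closed intervals agree on the dense set $S$ (using right-continuity of $T$ and the fact that $S$ has full measure, so it is dense in any subinterval), and semi-closed intervals that coincide on a dense set are equal, I conclude $[\overline{w}]=\overline{T^n([w])}$. The inclusion in the other direction can be obtained symmetrically by applying the argument to $\overline{w}$ in place of $w$ and using $\overline{\overline{K}}=K$ together with $T^n(T^n([\,\overline{w}\,]))$-type bookkeeping, or simply by a dimension/measure argument since both sides are intervals of equal length containing a common point.

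\textbf{Main obstacle.}
The only delicate point is handling the endpoints of the cylinders correctly: the naive reflection argument works cleanly only on $S$, so I must argue that the \emph{semi-closed} intervals $[\overline{w}]$ and $\overline{T^n([w])}$ — not just their interiors — coincide, which I do via right-continuity of $T$ (as used already after Proposition~\ref{p:spojitost}) plus the density of $S$. Everything else is routine bookkeeping with the index shift in \eqref{eq:ramec2}.
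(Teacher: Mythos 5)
Your proof is correct, but it takes a genuinely different route from the paper's. The paper argues at the level of itineraries and return words: it observes that the $[w]$-itineraries are exactly the return words to $w$ and that $[w]$ is the maximal interval with this property, then chains Remark~\ref{pozn:skakani} (to pass from $[w]$ to $T^n([w])$) with Proposition~\ref{p:symetrie} (to pass to $\overline{T^n([w])}$ and reverse the itineraries), and finally checks that the reversed itineraries are return words to $\overline{w}$, so that maximality of the cylinder gives $\overline{T^n([w])}\subseteq[\overline{w}]$. You instead bypass the return-word machinery entirely and do a direct pointwise computation with the reflection identities \eqref{eq:ramec1}--\eqref{eq:ramec2} from the \emph{proof} of Proposition~\ref{p:symetrie}, showing that for $\rho\in[w]\cap S$ the point $1-T^n(\rho)$ has $\overline{w}$ as a prefix of its coding; this buys a more elementary and self-contained argument (no appeal to maximality of cylinders), at the price of having to handle endpoints via density of $S$. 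Both proofs then close in the same way, by the symmetry of the roles of $w$ and $\overline{w}$ forcing $|[w]|=|[\overline{w}]|$.

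One small caution about your last step: what you actually establish is the one-sided inclusion $\overline{T^n([w])}\cap S\subseteq[\overline{w}]$, not that the two intervals ``coincide on a dense set,'' so the sentence drawing equality from that phrase does not stand on its own; likewise the alternative ``measure argument'' is circular, since equality of lengths is precisely what is not yet known at that point. Your first fix — running the same argument with $\overline{w}$ in place of $w$ to get $|[\overline{w}]|\leq|[w]|$, hence equal lengths, hence equality of the two half-open intervals whose closures are nested — is the right one and is exactly the paper's closing move; I would state it as the argument rather than as an aside.
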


\begin{proof}  According to definition of $[w]$, for each  $[w]$-itinerary $r$,  the word $rw$ belongs to the language
 and $w$ occurs in  $rw$  exactly twice, as a prefix and as a suffix. In other words   $r$ is a return word to $w$.
  Moreover,   $[w]$ is the maximal  (with respect to  inclusion) interval with this property.
According to Remark~\ref{pozn:skakani}, if $r$ is an $[w]$-itinerary, then  the word $w^{-1}rw$ is an $T^{n}([w])$-itinerary.
Applying  Proposition~\ref{p:symetrie} to the interval $T^{n}([w])$ we obtain that $s:=\overline{ w^{-1}rw}$  is an
 $\overline{T^{n}([w])}$-itinerary. Since the word $s \overline{w} = \overline{rw}$   has a prefix $\overline{w}$ and
  a suffix $\overline{w}$,  with no other occurrences of $\overline{w}$, the word $s$ is a return word to
   $\overline{w}$ and thus  by definition of the cylinder, $s=\overline{ w^{-1}rw}$  belongs to
    $ [\overline{w}]$-itinerary for any $\overline{T^{n}([w])}$-itinerary $s$.  From the maximality of
    the cylinder we have  $\overline{T^{n}([w])}\subset [\overline{w}]$.  Since lengths of the intervals
     $[w]$ and  $T^{n}([w])$ coincide we  have,  in particular, that  the length of interval $[w]$ is
      less or equal to the length of the interval $[\overline{w}]$. But from the symmetry of the role
      $w$ and $\overline{w}$, their length must be equal and thus  $\overline{T^{n}([w])}=[\overline{w}]$.
\end{proof}

The language of $T$  contains  two types of bispecial factors:
palindromic and non-palindromic.   In \cite{FeHoZa},    Ferenczi,
Holton and Zamboni studied the structure of return words to
non-palindromic bispecial factors. The following proposition
completes this description.

\begin{proposition}
Let $w$ be a bispecial factor.
If $w$ is a palindrome, then its return words are described by the cases (\ref{l:jakitiner_3rw_1}) and (\ref{l:jakitiner_3rw_2}) of Proposition~\ref{p:3itinerare}.
If $w$ is not a palindrome, then its return words are described by the cases (\ref{l:jakitiner_3rw_3}) -- (\ref{l:jakitiner_3rw_6}) of Proposition~\ref{p:3itinerare}.
\end{proposition}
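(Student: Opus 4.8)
The plan is to connect the six cases of Proposition~\ref{p:3itinerare}, which are phrased in terms of coincidences among the points $\a,\b,\c,\d$ attached to the interval $I$, with the combinatorial distinction between palindromic and non-palindromic bispecial factors, using Lemma~\ref{l:zrcadlovy_cylindr} as the bridge. Throughout I take $I=[w]$ for a bispecial factor $w$ of length $n$, so that the three $I$-itineraries are exactly the three return words to $w$ guaranteed by~\cite{vuillon}.

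\emph{First I would dispose of cases (\ref{l:jakitiner_3rw_3})--(\ref{l:jakitiner_3rw_6}).} In each of these the interior subintervals of $I$ are delimited by just two genuinely distinct points, one of which is $\b$ (or $\a$), and the two extreme subintervals $[\gamma,\cdot)$ and $[\cdot,\delta)$ carry itineraries of the form $R_1$ and $R_2$ (up to an $\omega$-substitution), with exactly one of these itineraries, namely the ``middle'' short one, differing from the other two by a single-letter replacement $B\leftrightarrow AC$ or $B\leftrightarrow CA$. Hence among the three return words to $w$, two have lengths differing by $1$ and are related by such a replacement, while in the palindromic cases (\ref{l:jakitiner_3rw_1})--(\ref{l:jakitiner_3rw_2}) the middle itinerary is the \emph{longest} and is a product-type word $\omega_{\bullet}(R_1R_2)=\omega_{\bullet}(R_2R_1)$. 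The cleanest way to separate the two situations is: \emph{$w$ is a palindrome if and only if the set of $I$-itineraries $\mathit{It}_I$ is closed under reversal.} Indeed, if $w=\overline w$ then Lemma~\ref{l:zrcadlovy_cylindr} gives $[w]=[\overline w]=\overline{T^n([w])}$, and combining this with Remark~\ref{pozn:skakani} and Proposition~\ref{p:symetrie} shows that the $[w]$-itineraries are exactly the mirror images of themselves, i.e. $\mathit{It}_I=\overline{\mathit{It}_I}$. Conversely, if $w\neq\overline w$, then $[\overline w]$ is a \emph{different} cylinder (of the same length $n$), and $\mathit{It}_I$ cannot be reversal-invariant, since a reversal-invariant set of three return words together with the cylinder identification would force $[w]$ and $\overline{T^n([w])}=[\overline w]$ to coincide as intervals, forcing $w=\overline w$.

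\emph{Next I would check that reversal-invariance of $\mathit{It}_I$ picks out exactly cases (\ref{l:jakitiner_3rw_1}) and (\ref{l:jakitiner_3rw_2}).} In cases (\ref{l:jakitiner_3rw_1})--(\ref{l:jakitiner_3rw_2}) the three itineraries are $R_1$, $R_2$ and $M:=\omega_{B\to CA}(R_1R_2)=\omega_{B\to AC}(R_2R_1)$ (or the analogue with $AC$ and $CA$ swapped). Using Proposition~\ref{p:symetrie} one sees that passing to $\overline I$ sends this triple to $\overline{R_1},\overline{R_2},\overline M$, and a direct check — $\overline{\omega_{B\to CA}(R_1R_2)}=\omega_{B\to AC}(\overline{R_2}\,\overline{R_1})$ — shows that the triple is reversal-invariant precisely when $\{\overline{R_1},\overline{R_2}\}=\{R_1,R_2\}$, which is compatible with $w$ being a palindrome (and is exactly what Lemma~\ref{l:zrcadlovy_cylindr} forces). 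In cases (\ref{l:jakitiner_3rw_3})--(\ref{l:jakitiner_3rw_6}), by contrast, the single-letter-replacement relation among the itineraries is not symmetric under reversal: e.g. in case (\ref{l:jakitiner_3rw_3}) the shortest itinerary $\omega_{CA\to B}(R_1)$ has a factor $CA$ replaced, whereas its reversal would require a factor $AC$ to be replaced, so unless further accidental coincidences occur the itinerary set is not reversal-closed — and one argues that such accidental coincidences would contradict non-degeneracy (i.d.o.c.) via~\eqref{eq:nondeg}. Hence the non-palindromic bispecials land in (\ref{l:jakitiner_3rw_3})--(\ref{l:jakitiner_3rw_6}).

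\emph{The main obstacle} I anticipate is making rigorous the claim that reversal-invariance of the three return words forces $[w]=[\overline w]$ and hence $w=\overline w$ (the converse to Lemma~\ref{l:zrcadlovy_cylindr}'s consequence). One must rule out the possibility that $w$ and $\overline w$ are distinct factors whose cylinders nonetheless produce the same (reversal-invariant) set of return words sitting inside intervals of equal length but in different positions; here the key facts are that cylinder boundaries lie in $\{T^{-i}(z):0\le i<n,\ z\in\{\alpha,\beta\}\}$ together with the non-degeneracy condition~\eqref{eq:nondeg}, which guarantees that the orbits of $\alpha$ and $\beta$ are infinite and disjoint, so two distinct length-$n$ cylinders are genuinely distinct intervals. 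Once that identification step is pinned down, matching the six cases of Proposition~\ref{p:3itinerare} against ``$\mathit{It}_I$ reversal-closed or not'' is a finite and essentially mechanical verification, and the reference to~\cite{FeHoZa} for the non-palindromic case then shows our description is consistent with the known one.
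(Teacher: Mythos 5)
Your argument hinges on the equivalence ``$w$ is a palindrome if and only if $\mathit{It}_{[w]}$ is closed under reversal'', and this equivalence is false; in fact its forward direction fails for \emph{every} bispecial factor. Every return word $v$ of $w$ begins with the first letter of $w$ (since $w$ is a prefix of $vw$), while the last letters of the return words are exactly the left extensions of $w$, of which a bispecial factor has at least two. Hence some return word $v$ ends in a letter different from ${\rm Fst}(w)$, and $\overline{v}$, which begins with that letter, cannot be a return word; so the set of return words of a bispecial palindrome is never reversal-closed. (Concretely: the return words of the palindrome $C$ in the word $ACBCACBCACBBCAC\cdots$ of Section~9 are $CA$, $CB$, $CBB$, none of whose reversals is a return word.) What Lemma~\ref{l:zrcadlovy_cylindr}, Remark~\ref{pozn:skakani} and Proposition~\ref{p:symetrie} actually yield for a palindrome $w$ of length $n$ is
\[
\mathit{It}_{[w]}=\mathit{It}_{[\overline{w}]}=\bigl\{\,\overline{w^{-1}rw}\;:\;r\in \mathit{It}_{[w]}\,\bigr\},
\]
i.e.\ invariance under reversal composed with conjugation by $w$, not under plain reversal. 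Since the dichotomy you propose does not hold on either side, the subsequent matching with the cases of Proposition~\ref{p:3itinerare} cannot be carried out, and the step ``accidental coincidences would contradict non-degeneracy'' is never actually performed.

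The information to extract from Lemma~\ref{l:zrcadlovy_cylindr} concerns the \emph{interval}, not the itinerary set. Writing $[w]=[T^{-\ell}(L),T^{-r}(R))$ with $L,R\in\{\alpha,\beta\}$ and $0\le \ell,r<n$, palindromicity gives $[w]=[\overline{w}]=\overline{T^{n}([w])}$, hence the endpoint identity $T^{-\ell}(L)=1-T^{n-r}(R)$. Both sides differ from $L$, resp.\ $1-R$, by elements of $(1-\alpha)\Z+\beta\Z$, and condition~\eqref{eq:nondeg} shows this identity is solvable only when $L\neq R$. Now $\c$ lies on the orbit of the left endpoint (hence of $L$), $\d$ on the orbit of the right endpoint (hence of $R$), and $\a,\b$ on the orbits of $\alpha,\beta$; by i.d.o.c.\ a coincidence $\a=\c=\d$ or $\b=\c=\d$ would force $L=R$, so exactly the cases (\ref{l:jakitiner_3rw_3})--(\ref{l:jakitiner_3rw_6}) of Proposition~\ref{p:3itinerare} are excluded and a palindromic bispecial factor falls into case (\ref{l:jakitiner_3rw_1}) or (\ref{l:jakitiner_3rw_2}). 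For the non-palindromic half of the statement the paper does not argue at all but invokes Theorem~4.6 of~\cite{FeHoZa}; your sketch offers no substitute for that step either.
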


\begin{proof}
Let $w$ be a bispecial factor.
If $w$ is not a palindrome, the claim follows from Theorem 4.6 of \cite{FeHoZa}.

Assume $w$ is a palindrome and let $[w] = [T^{-\ell}(L) , T^{-r}(R) )$ with $L,R \in \{\alpha, \beta\}$ and $0 \leq \ell,r < |w|$.
By Lemma~\ref{l:zrcadlovy_cylindr} we have $[\overline{w}] = \overline{T^{|w|}\left([ w]\right)}$.
Since $w=\overline{w}$, we have $I_{w} = I_{\overline{w}}$, and thus
\[
I_w = [T^{-\ell}(L), T^{-r}(R)) =  [1-T^{n-r}(R), 1- T^{n-\ell}(L)) = I_{\overline{w}}.
\]
Since the considered 3iet is non-degenerate,
the parameters $\alpha, \beta$ satisfy \eqref{eq:nondeg}. Consequently, the equation $T^{-\ell}(L) = 1-T^{n-r}(R)$ has a solution if and only if $R \neq L$.
Thus, we have neither $\a = \c = \d$ nor $\b = \c = \d$ and we are in the case (\ref{l:jakitiner_3rw_1})
or (\ref{l:jakitiner_3rw_2}) of Proposition~\ref{p:3itinerare}.
\end{proof}

\section{Substitution invariance and conjugation of substitutions}\label{sec:substitutions}

Let us recall the relation of induction to a subinterval $I$ to substitution invariance of 3iet words.
Let $I$ be an interval $I\subset[0,1)$ such that the set ${\rm It}_I$ of $I$-itineraries has three elements, say $R_A$, $R_B$ and $R_C$.
For every $\rho\in I$, the infinite word ${\bf u}_\rho$ can be written as a concatenation of words $R_A$, $R_B$ and $R_C$.
For a letter $Y\in\{A,B,C\}$ denote $I_Y=\{x\in I \colon R(x)=R_Y\}$. Obviously, $I=I_A\cup I_B\cup I_C$, and the induced mapping $T_I$ is an exchange of these three intervals.
The order of the words $R_A, R_B$ and $R_C$ in the concatenation is determined by the iterations of $T_I(\rho)$.

Suppose that $T_I$ is homothetic to $T$. Recall that mappings $f:I_f\to I_f$ and $g:I_g\to I_g$ are homothetic
if there exists an affine bijection $\Phi:I_f\to I_g$ with $\Phi(x) = \lambda x+\mu$ such that
\begin{equation}\label{eq:afbij}
\Phi f(x) = g \Phi(x)\quad \text{ for all } x\in I_f\,.
\end{equation}
This means that $f$ and $g$ behave in the same way, up to a scaling factor $\lambda$ and a shift $\mu$ of
the domains $I_f$ and $I_g$. In other words, the graphs of the mappings
$f$ and $g$ are the same, up to their scale and placing.
The homothety of $T$ and $T_I$ implies that $\Phi(J_Y)=I_Y$ for all
$Y \in \{A,B,C\}$. From~\eqref{eq:afbij}, we derive for every
$k\in\N$ that $\Phi T^k(x)=T^k_I\Phi(x)$ for $x\in[0,1)$, and thus
$\Phi T^k(\rho)=T^k_I(\rho)$ whenever
\begin{equation}\label{eq:homofixesintercept}
\Phi(\rho)=\rho\,,
\end{equation}
i.e., $\rho$ is the homothety center. From the relation
$\Phi(J_Y)=I_Y$ it follows that the $k$-th element in the
concatenation of itineraries $R_A$, $R_B$ and $R_C$ is equal to
$R_Y$ if and only if the $k$-th letter in the infinite word ${\bf
u}_\rho$ is equal to $Y$. This is equivalent to saying that the
infinite word ${\bf u}_\rho$ is invariant under the substitution
$\eta$ given by
\begin{equation}\label{eq:rabc}
\eta(A)=R_A,\ \eta(B)=R_B,\ \eta (C)=R_C\,.
\end{equation}

We conclude that the existence of an interval $I$ with three
itineraries and $T_I$ homothetic to $T$ leads to a substitution
fixing a 3iet word whose intercept is the homothety center $\rho$.
In fact, the converse holds, too, as shown in~\cite{ArBeMaPe}. We
summarize both statements as follows.

\begin{theorem}[\cite{ArBeMaPe}]\label{thm:invhom}
Let $\xi$ be a primitive substitution over $\{A,B,C\}$ with
incidence matrix $M$ and let $T$ be a non-degenerate 3iet. The
substitution $\xi$ fixes the word ${\bf u}_\rho$ coding the orbit of
a point $\rho\in[0,1)$ under $T$ if and only if there exists an
interval $I\subset[0,1)$ with $I$-itineraries
$\mathit{It}_I=\{R_A,R_B,R_C\}$ such that $T_I$ is homothetic to
$T$, $\rho$ is the homothety center, and the substitution $\eta$
given by
\[
\eta=
\begin{cases}
\xi &\text{if no eigenvalue of $M$ belongs to }(-1,0),\\
\xi^2 &\text{otherwise,}
\end{cases}
\]
satisfies $\eta(A)=R_A$, $\eta(B)=R_B$ and $\eta (C)=R_C$.
\end{theorem}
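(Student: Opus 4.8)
The implication ``$\Leftarrow$'' is essentially the discussion preceding the statement, so I would only tidy it up. Given $I$ with $\mathit{It}_I=\{R_A,R_B,R_C\}$ and $T_I$ homothetic to $T$ through an affine bijection $\Phi\colon[0,1)\to I$ with $\Phi(\rho)=\rho$ and $\Phi(J_Y)=I_Y$, the identity $\Phi T^k(\rho)=T_I^k(\rho)$ shows that the $k$-th letter of ${\bf u}_\rho$ is $Y$ exactly when the $k$-th block in the decomposition of ${\bf u}_\rho$ into $I$-itineraries is $R_Y=\eta(Y)$; hence $\eta({\bf u}_\rho)={\bf u}_\rho$. If $\eta=\xi$ this is the claim. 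If $\eta=\xi^2$, I would use that $\xi$ is a substitution, so $\xi(u_0)=u_0\cdots$, together with the fact that a primitive substitution has, for each admissible initial letter, a unique fixed point, to transfer $\eta({\bf u}_\rho)={\bf u}_\rho$ to $\xi({\bf u}_\rho)={\bf u}_\rho$; the one subtle point (addressed below) is why the eigenvalue condition dictates the correct choice between $\xi$ and $\xi^2$.

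For ``$\Rightarrow$'', assume $\xi({\bf u}_\rho)={\bf u}_\rho$. In both cases $M_\eta$ has no eigenvalue in $(-1,0)$: for $\eta=\xi$ this is the hypothesis, and for $\eta=\xi^2$ a real eigenvalue of $M$ in $(-1,0)$ forces all eigenvalues of $M$ to be real, so $M^2$ has only nonnegative real eigenvalues. I would decompose ${\bf u}_\rho=\eta({\bf u}_\rho)=\eta(u_0)\eta(u_1)\eta(u_2)\cdots$, obtaining the block-boundary set $\mathcal N=\{N_k:k\geq 0\}$ with $N_k=|\eta(u_0u_1\cdots u_{k-1})|$, and aim to realise $\mathcal N$ as the set of visit times of $\rho$ to an interval. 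The candidate is $I:=\Phi([0,1))$ with $\Phi(x)=\rho+\lambda(x-\rho)$, $|\lambda|=1/\Lambda$, where $\Lambda>1$ is the Perron eigenvalue of $M_\eta$. One checks that the frequency vector of ${\bf u}_\rho$ coincides with the interval-length vector $(\alpha,\beta-\alpha,1-\beta)$ and is a left Perron eigenvector of $M_\eta$, which forces $|I|=1/\Lambda$ and identifies $\Phi$ as the affine map sending each $J_Y$ onto the subinterval $I_Y\subset I$ of points with a common itinerary (using Proposition~\ref{p:bamape3iet} to know $T_I$ is again a $(321)$-3iet). The self-similarity $\eta({\bf u}_\rho)={\bf u}_\rho$ makes everything fit: the suffix $u_{N_k}u_{N_k+1}\cdots$ of ${\bf u}_\rho$ equals $\eta(u_ku_{k+1}\cdots)$, so its intercept $T^{N_k}(\rho)$ is carried from $T^k(\rho)$ precisely by $\Phi$, i.e.\ $\Phi T^k(\rho)=T^{N_k}(\rho)$ for all $k$. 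Minimality (i.d.o.c.)\ then forces $\{T^{N_k}(\rho)\}$ dense in $I$, no return to $I$ skipped, $T_I=\Phi T\Phi^{-1}$, $\rho$ the homothety center, and $\mathit{It}_I=\{\eta(A),\eta(B),\eta(C)\}$; Proposition~\ref{p:3itinerare} then says which of the six three-itinerary configurations occurs.

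It remains to pin down the sign of $\lambda$, equivalently the role of an eigenvalue of $M$ in $(-1,0)$. That sign records whether the renormalisation attached to $\xi$ preserves or reverses the spatial order $J_A<J_B<J_C$, and it equals the sign of the non-Perron eigenvalue of $M$ lying in $(-1,1)$. If that eigenvalue is positive, $\xi$ is order-preserving and $\eta=\xi$ reads off the itineraries in the normalisation~\eqref{eq:rabc}; if it lies in $(-1,0)$, $\xi$ is order-reversing, and squaring replaces the relevant eigenvalue by its positive square, so $\eta=\xi^2$ is the order-preserving map whose images are literally $R_A,R_B,R_C$. Making the dictionary ``order-reversing $\Leftrightarrow$ negative eigenvalue'' precise, and using it to complete the transfer between $\xi$ and $\xi^2$ in the ``$\Leftarrow$'' part, is the fiddly bookkeeping of the argument.

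The genuine obstacle is the core of the ``$\Rightarrow$'' direction: proving that the purely combinatorial block-boundary set $\mathcal N$ really is the visit-time set of an honest sub-interval of $[0,1)$ — that is, that the fixed-point structure of $\eta$ is \emph{geometric} relative to $T$ and not merely symbolic. This is the step where the linear-algebraic data (the Perron value $\Lambda$ together with the contracting eigendirection governing the parameters $\alpha,\beta$) must be turned into a concrete interval $I\ni\rho$; once $I$ is in hand, Lemma~\ref{l:keane}, Proposition~\ref{p:bamape3iet} and the rigidity of non-degenerate 3iets make the verification that $T_I$ is homothetic to $T$ essentially routine.
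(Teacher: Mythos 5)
The first thing to note is that the paper does not prove this statement at all: Theorem~\ref{thm:invhom} is imported from~\cite{ArBeMaPe}, and the only argument supplied in the text is the informal discussion preceding it, which covers the ``$\Leftarrow$'' direction (an interval $I$ with three itineraries and $T_I$ homothetic to $T$ with homothety center $\rho$ yields a substitution fixing ${\bf u}_\rho$). Your ``$\Leftarrow$'' paragraph reproduces that discussion faithfully, and your identification of $|I|=1/\Lambda$ is correct and can be made precise: the Rokhlin-tower identity $\sum_Y |I_Y|\,|\eta(Y)|_X=|J_X|$ reads $\lambda\, f^{T}M_\eta=f^{T}$ with $f=(\alpha,\beta-\alpha,1-\beta)$ and $|I_Y|=\lambda|J_Y|$, and since $f^{T}M_\eta=\Lambda f^{T}$ this forces $\lambda=1/\Lambda$.

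As a proof, however, the proposal has genuine gaps, which you yourself flag but do not close. The core of ``$\Rightarrow$'' is the claim $\Phi T^{k}(\rho)=T^{N_k}(\rho)$ together with the assertion that no visit of the orbit of $\rho$ to $I$ occurs strictly inside a block $\eta(u_k)$; without the latter, $\mathcal N$ could be a proper subset of the return times, $T_I$ would not equal $\Phi T\Phi^{-1}$, and ${\rm It}_I$ need not be $\{\eta(A),\eta(B),\eta(C)\}$. Appealing to minimality and i.d.o.c.\ does not by itself rule this out; this is exactly the ``geometric realization'' step you call the genuine obstacle, and it is the substance of the argument in~\cite{ArBeMaPe}. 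The second unproved ingredient is the $\xi$ versus $\xi^{2}$ dichotomy: the transfer from $\xi^{2}({\bf u}_\rho)={\bf u}_\rho$ to $\xi({\bf u}_\rho)={\bf u}_\rho$ in ``$\Leftarrow$'' (a priori $\xi$ could swap two fixed points of $\xi^{2}$), and the dictionary ``eigenvalue of $M$ in $(-1,0)$ $\Leftrightarrow$ order-reversing renormalization'', are both described plausibly but never established. So the strategy is the right one and consistent with the cited source, but the two hard steps are named rather than proven; to make this self-contained you would essentially have to reconstruct the proof of~\cite{ArBeMaPe}.
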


Let us mention that the scaling factor $\lambda\in(0,1)$ in the
homothety mapping
$\Phi (x) =\lambda x+\mu$ is equal to the length of
the interval $I=[\gamma,\delta)$, i.e., $\lambda=\delta-\gamma$, and
the shift $\mu$ is equal to the left end-point of the interval $I$,
namely $\gamma$. Moreover, it is related to the intercept $\rho$ of
an infinite word  ${\bf u}_\rho$ in the following way: one has
$\mu=\gamma=\rho(1-\lambda)$, as follows
from~\eqref{eq:homofixesintercept}. In fact, $\lambda$ is an
eigenvalue of the incidence matrix of $\eta$. It follows
from~\cite{ArBeMaPe} that if $\xi$ has such an eigenvalue, then the
choice $\eta=\xi$ is sufficient. Otherwise, the incidence matrix of
$\xi^2$ has such an eigenvalue.

By Theorem~\ref{thm:invhom}, if ${\bf u}_\rho$ is invariant under a
substitution, we find an interval $I$ such that $T_I$ is homothetic
to $T$. If $I'=T(I)$ is again an interval, then $T_{I'}$ is also
homothetic to $T$, and the $I'$-itineraries change with respect to
the $I$-itineraries, as described in Remark~\ref{pozn:skakani}. To
show the relation of the corresponding substitutions, we need the
following definition.

\begin{definition}\label{d:conjugmorf}
Let $\varphi$ and $\psi$ be morphisms over $\A^*$ and let $w\in\A^*$
be a word such that $w\varphi(a) = \psi(a)w$ for every letter
$a\in\A$. The morphism $\varphi$ is said to be a \textit{left
conjugate} of $\psi$ and $\psi$ is said to be a \textit{right
conjugate} of $\varphi$. We write $\varphi\vartriangleleft\psi$. If
$\varphi$ is a left or right conjugate of $\psi$, then we say
$\varphi$ is \textit{conjugate} to $\psi$. If the only left
conjugate of $\varphi$ is $\varphi$ itself, then $\varphi$ is called
the \textit{leftmost conjugate} of $\psi$ and we write
$\varphi=\psi_L$. If the only right conjugate of $\psi$ is $\psi$
itself, then $\psi$ is called the \textit{rightmost conjugate} of
$\varphi$ and we write $\psi=\varphi_R$.
\end{definition}

Note that given a substitution $\xi$, its leftmost and rightmost conjugates $\xi_L$ and $\xi_R$ may not exist.
If this happens, it can be shown that its fixed point is a periodic word.
All the substitutions considered here thus possess their leftmost and rightmost conjugates.

\begin{proposition}\label{p:conjug}
Let ${\bf u}_\rho$ be a 3iet word coding the orbit of the point
$\rho \in [0,1)$ under a non-degenerate 3iet $T$. Moreover, assume
that ${\bf u}_\rho$ is a fixed point of a primitive substitution
$\eta$ such that the corresponding interval $I$ of
Theorem~\ref{thm:invhom} is of length $\lambda$. Let $\eta'$ be a
left conjugate of $\eta$, i.e., $\eta(a)w = w\eta'(a)$ for some word
$w\in\A^*$. The morphism $\eta'$ fixes the infinite word ${\bf
u}_{\rho'}$ with $\rho'$ satisfying
\begin{equation}\label{eq:conjugintercept}
(1-\lambda)\rho' = T^{n}\big((1-\lambda)\rho\big)\,,\quad\text{ where } n=|w|.
\end{equation}
Moreover, the interval $I'$ corresponding to $\eta'$ by Theorem~\ref{thm:invhom} satisfies $I'=T^n(I)$.
\end{proposition}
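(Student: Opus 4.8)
The plan is to unravel the statement using Theorem~\ref{thm:invhom} together with the homothety description recorded just above it. Recall that $\eta$ fixing ${\bf u}_\rho$ gives an interval $I=[\gamma,\delta)$ of length $\lambda=\delta-\gamma$ with ${\rm It}_I=\{R_A,R_B,R_C\}$, $\eta(Y)=R_Y$, $T_I$ homothetic to $T$ via the affine bijection $\Phi(x)=\lambda x+\mu$ with $\mu=\gamma$, and $\Phi(\rho)=\rho$, so $\gamma=(1-\lambda)\rho$. The first step is to identify which interval $\eta'$ should correspond to: since $\eta'(a)w=w^{-1}\cdot w\eta'(a)$... more precisely from $\eta(a)w=w\eta'(a)$ we see that the concatenation of $R_A,R_B,R_C$ coding ${\bf u}_\rho$, when we ``shift the cutting points by $|w|=n$ letters to the right'', produces the $\eta'$-images. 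Concretely, I would set $I'=T^n(I)$ — this is an interval because of how $I$ sits inside $J$ (one should check $I'$ does not straddle a discontinuity of $T$; this follows since $w$ is a prefix of some $\eta(Y)=R_Y$, hence $T^i(I)$ for $0\le i\le n$ are all intervals, using the itinerary structure in Remark~\ref{pozn:skakani}).

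The second step is to verify that $T_{I'}$ is homothetic to $T$ with the asserted center, and that the itineraries of $I'$ are exactly $\eta'(A),\eta'(B),\eta'(C)$. For the homothety: since $I'=T^n(I)$ and $T$ is a bijection commuting with its own iterates, $T_{I'}=T^n\circ T_I\circ T^{-n}$ on $I'$, so $T_{I'}$ is conjugate to $T_I$ by $T^n$; composing the homothety $\Phi:[0,1)\to I$ with an appropriate affine adjustment gives a homothety $\Phi':[0,1)\to I'$. The cleaner route is to invoke Theorem~\ref{thm:invhom} in the converse direction: I will show directly that $\eta'$ fixes ${\bf u}_{\rho'}$ for the $\rho'$ defined by \eqref{eq:conjugintercept}, and then Theorem~\ref{thm:invhom} automatically supplies an interval; uniqueness of the length-$\lambda$ interval with center $\rho'$ then forces it to be $I'=T^n(I)$. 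To see $\eta'$ fixes ${\bf u}_{\rho'}$: the intercept $\rho'$ is the homothety center of the candidate $\Phi'$, and $\Phi'(x)=\lambda x+\mu'$ with $\mu'=\gamma'=$ left endpoint of $I'$; the relation $\gamma'=(1-\lambda)\rho'$ combined with $\gamma=(1-\lambda)\rho$ and the fact that $I'=T^n(I)$ translates endpoints by the affine action of $T^n$ restricted appropriately gives exactly $(1-\lambda)\rho'=T^n((1-\lambda)\rho)$.

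The third step is the bookkeeping that the $I'$-itineraries are the $\eta'(Y)$: starting from the $I$-itinerary factorization ${\bf u}_\rho=R_{Y_0}R_{Y_1}R_{Y_2}\cdots$ where ${\bf u}_\rho=Y_0Y_1Y_2\cdots$ (this is the content of $\eta({\bf u}_\rho)={\bf u}_\rho$), moving the interval forward by $T^n$ means discarding the first $n$ symbols of ${\bf u}_\rho$ and re-parsing; because $\eta(Y)w=w\eta'(Y)$ for all $Y$, the word $w$ is a common prefix of every $R_Y=\eta(Y)$... no — rather $w$ is a prefix of $R_{Y_0}$ specifically, and the shifted parsing yields blocks $w^{-1}R_{Y_0}w, \ldots$ which telescope to $\eta'(Y_0),\eta'(Y_1),\ldots$ via $R_{Y_i}w = w\,\eta'(Y_i)$... wait, that is $\eta(Y_i)w=w\eta'(Y_i)$, so $w^{-1}R_{Y_0}\,R_{Y_1}\cdots = w^{-1}R_{Y_0}w\cdot w^{-1}R_{Y_1}w\cdots = \eta'(Y_0)\eta'(Y_1)\cdots$. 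Combined with Remark~\ref{pozn:skakani} describing how itineraries transform under $T$, and an induction on $i\le$ (length of $w$ in letters) to handle that $w$ may span the first few blocks, this identifies ${\rm It}_{I'}=\{\eta'(A),\eta'(B),\eta'(C)\}$ and $T^n(I)$ as the interval of Theorem~\ref{thm:invhom} for $\eta'$. I expect the main obstacle to be exactly this combinatorial/geometric matching: keeping straight that the ``shift by $w$'' on the word side corresponds to ``apply $T^n$'' on the interval side, and in particular checking that $T^i(I)$ stays an interval for $0\le i\le n$ so that $I'$ is well-defined — this requires that $w$, as a prefix of the concatenated word, does not cross a discontinuity, which one deduces from the fact that $w\eta'(a)=\eta(a)w$ makes $w$ a prefix of $\eta(a)=R_a$ for the relevant letter $a$, i.e.\ a prefix of an actual $I$-itinerary, hence $|w|\le r_I(x)$ for the appropriate $x$ and no discontinuity of $T$ is hit along $T^0(I),\dots,T^{|w|}(I)$.
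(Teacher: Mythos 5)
Your proposal is correct and follows essentially the same route as the paper: identify $I'=T^n(I)$ via Remark~\ref{pozn:skakani}, observe that $T_{I'}$ is homothetic to $T$ with homothety $\Phi'(x)=\lambda x+T^n(\gamma)$, and eliminate $\gamma$ from the two fixed-point equations $\lambda\rho+\gamma=\rho$ and $\lambda\rho'+T^n(\gamma)=\rho'$. The only organizational difference is that the paper reduces to $|w|=1$ (where all $\eta(a)$ start with the same letter $X$, so $I\subset J_X$ and $T(I)$ is trivially an interval) and conjugates letter by letter, which sidesteps the one imprecise spot in your argument: $w$ need not be a prefix of a single $\eta(a)$, but it is a prefix of $\eta(v)$ for every sufficiently long factor $v$ of the fixed point (since $\eta(v)w=w\eta'(v)$), hence a prefix of ${\bf u}_x$ for every $x\in I$, which is what actually guarantees that $T^i(I)$ stays inside one $J_{w_i}$ for all $i<n$.
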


\pfz Suppose that $w$ is a letter, i.e., $w=X\in\A$. Necessarily,
the words $\eta(a)$ start with the letter $X$ for all $a\in\A$. This
means for the interval $I$ that $I\subset J_X$. According to
Remark~\ref{pozn:skakani}, the interval $I'=T(I)$ has three
$I'$-itineraries. Moreover, the induced mapping $T_{I'}$ is also
homothetic to $T$. Denote $I=[\gamma,\delta)$. The homothety between
the transformations $T$ and $T_I$ is achieved by the map
$\Phi(x)=\lambda x + \gamma$. The homothety between $T$ and $T_{I'}$
is the map $\Phi'(x)=\lambda x + T(\gamma)$. Since the intercepts
$\rho$ and $\rho'$ are by~\eqref{eq:homofixesintercept} fixed by the
homotheties $\Phi$ and $\Phi'$, respectively, we have
\begin{equation}\label{eq:cislo}
\Phi(\rho)=\lambda\rho + \gamma = \rho\, \quad\text{ and }\quad \Phi'(\rho')=\lambda\rho' + T(\gamma) = \rho'\,.
\end{equation}
Eliminating $\gamma$, we obtain
\[
(1-\lambda)\rho' = T(\gamma) = T\big((1-\lambda)\rho\big)\,.
\]
Since conjugation by any word $w$ can be performed letter by letter, the proof is finished.
\pfk

\begin{remark}\label{pozn:intercept}
Note that the first of equalities in~\eqref{eq:cislo} implies for the left boundary point $\gamma$ of the interval $I$ that
$\gamma=(1-\lambda)\rho$. If $w$ is as in Proposition~\ref{p:conjug}, then for for $0\leq k< n=|w|$, the iterations $T^k(I)$ are all intervals, and hence the
coding ${\bf u}_x$ of every point $x\in I$ starts with the same prefix $w$.
\end{remark}

In the following, we will also need to see the relation of the substitution $\eta$
corresponding to the interval $I=[\gamma,\delta)$ with the
substitution corresponding to the interval
$\overline{I}=[1-\delta,1-\gamma)$. It turns out that it is the
mirror substitution of $\eta$, defined in general as follows. For a
morphism $\xi: \A \to \A$, we define the morphism $\overline{\xi}:
\A \to \A$ by $\overline{\xi}(a)=\overline{\xi(a)}$ for $a\in\A$.

\begin{proposition}\label{p:mirror}
Let $I\subset[0,1)$ be a left-closed right-open interval such that
$\#\mathit{It}_I=3$ and $T_I$ is an exchange of three intervals with
the permutation $(321)$. The interval $\overline{I}$ satisfies
$\#\mathit{It}_{\overline{I}}=3$ and the induced map
$T_{\overline{I}}$ is homothetic to $T_I$. If, moreover, $T_I$ is
homothetic to $T$ and the substitution  $\eta$ corresponding to $I$
fixes the infinite word ${\bf u}_\rho$, then the substitution
corresponding to $\overline{I}$ is $\overline{\eta}$ and fixes the
infinite word ${\bf u}_{\overline{\rho}}$, where
$\overline{\rho}=1-\rho$.
\end{proposition}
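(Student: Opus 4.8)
The plan is to exploit the symmetry relation already established in Proposition~\ref{p:symetrie}, which tells us that the $\overline{I}$-itineraries are exactly the mirror images of the $I$-itineraries, together with the precise description of how the subintervals $I_Y$ map to subintervals of $\overline{I}$. First I would check that $\#\mathit{It}_{\overline{I}} = 3$: this is immediate from Proposition~\ref{p:symetrie}, since the map $R_j \mapsto \overline{R_j}$ is a bijection between $\mathit{It}_I$ and $\mathit{It}_{\overline{I}}$. Next, I would verify that $T_{\overline{I}}$ is homothetic to $T_I$. The natural candidate for the affine bijection is $x \mapsto 1-x$ composed appropriately; more precisely, writing $I = [\gamma,\delta)$ so that $\overline{I} = [1-\delta, 1-\gamma)$, the reflection $\sigma(x) = 1-x$ does not map $I$ to $\overline{I}$ as left-closed-right-open intervals in an orientation-preserving way, but the key computation~\eqref{eq:ramec1} from the proof of Proposition~\ref{p:symetrie}, namely $T^{-i}(y) = 1 - T^i(1-y)$, shows that conjugating the first return map by the reflection inverts it. Since $T_I$ is a 3iet with permutation $(321)$, which is its own inverse up to relabelling, one gets that $T_{\overline{I}}$ is homothetic (via an orientation-reversing affine map, which is still an affine bijection) to $T_I$; I would spell out that the homothety constant has absolute value $1$ here because $|\overline{I}| = |I|$.

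**Then I would handle the substitution statement.** Assume now $T_I$ is homothetic to $T$, with homothety $\Phi(x) = \lambda x + \gamma$ fixing $\rho$, so that $\gamma = (1-\lambda)\rho$ by~\eqref{eq:homofixesintercept} and Remark~\ref{pozn:intercept}; the substitution $\eta$ with $\eta(Y) = R_Y$ fixes ${\bf u}_\rho$. For $\overline{I} = [1-\delta, 1-\gamma)$, the induced map $T_{\overline{I}}$ is homothetic to $T$ via $\overline{\Phi}(x) = \lambda x + (1-\delta) = \lambda x + (1 - \gamma - \lambda)$, and its fixed point $\overline{\rho}$ satisfies $(1-\lambda)\overline{\rho} = 1 - \delta$. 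Since $\delta = \gamma + \lambda = (1-\lambda)\rho + \lambda$, we get $(1-\lambda)\overline{\rho} = 1 - (1-\lambda)\rho - \lambda = (1-\lambda)(1-\rho)$, hence $\overline{\rho} = 1 - \rho$, as claimed. For the substitution: by the first part, the $\overline{I}$-itinerary of the subinterval corresponding to letter $Y$ is $\overline{R_Y}$ — here I would invoke the precise correspondence in the "moreover" clause of Proposition~\ref{p:symetrie} relating $[\gamma_j, \delta_j)$ to $[1-\delta_j', 1-\gamma_j')$, to be sure that the ordering of the three subintervals of $\overline{I}$ is compatible with the labelling $A,B,C$ under the permutation $(321)$ applied via the reflection. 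This identifies the substitution attached to $\overline{I}$ (in the sense of~\eqref{eq:rabc}) as $Y \mapsto \overline{R_Y} = \overline{\eta(Y)} = \overline{\eta}(Y)$, i.e.\ it is $\overline{\eta}$, and by Theorem~\ref{thm:invhom} (applied to $\overline{I}$, $\overline{\rho}$) it fixes ${\bf u}_{\overline{\rho}}$.

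**The main obstacle I anticipate** is bookkeeping the interaction between the interval-reversal operation $K \mapsto \overline{K}$ (which reverses orientation) and the convention that all intervals are left-closed right-open, together with the relabelling of the three pieces of $T_{\overline{I}}$. One must be careful that "homothetic" in the statement is meant with a genuine affine bijection $\Phi(x) = \lambda x + \mu$ with $\lambda > 0$ (orientation preserving), so the reflection alone is not such a map; the resolution is that composing two orientation-reversing maps ($T_I \mapsto$ reflect $\mapsto$ use that $(321)^{-1}$ relabels back to $(321)$) yields an orientation-preserving homothety, and tracking this correctly is where~\eqref{eq:ramec1} does the work. A second, minor point is to confirm that $T_{\overline{I}}$ really has permutation $(321)$ and not $(21)$; this follows because $T_I$ does and the reflection/relabelling preserves the "number of intervals" of the first return map. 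Once these orientation and labelling issues are pinned down, everything else is the routine arithmetic with $\lambda$, $\gamma$, $\delta$, $\rho$ carried out above.
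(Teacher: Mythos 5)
Your proposal is correct and follows essentially the same route as the paper: both rest on Proposition~\ref{p:symetrie} to get the mirrored itineraries and the correspondence $I_j\mapsto[1-\delta'_j,1-\gamma'_j)$, use the $(321)$ permutation to see that the order and lengths of the three subintervals are preserved (so $T_{\overline{I}}$ is homothetic to $T_I$ by a translation and the substitution is $Y\mapsto\overline{R_Y}=\overline{\eta}(Y)$), and finish with the same fixed-point arithmetic giving $\overline{\rho}=1-\rho$. Your framing of the homothety step via ``reflection conjugates $T_I$ to its inverse'' is just a repackaging of the paper's direct interval bookkeeping, and you correctly identify and resolve the orientation issue it raises.
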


\pfz
Denote $\mathit{It}_I=\{R_1,R_2,R_3\}$ and  $I_j=\{x\in I \colon R_I(x)=R_j\} = [\gamma_j,\delta_j)$ for $j=1,2,3$ so that $I_1<I_2<I_3$.
By Proposition~\ref{p:symetrie}, the $\overline{I}$-itineraries are $\overline{R_1},\overline{R_2}$ and $\overline{R_3}$, where
\[
I'_j=\{x\in \overline{I} \colon R_{\overline{I}}(x)=\overline{R_j}\} = [1-\delta'_j,1-\gamma'_j)\,,
\]
where $[\gamma'_j,\delta'_j)=T_I[\gamma_j,\delta_j)$. Since $T_I$ is an exchange of three intervals with the permutation $(321)$ we have
\[
T_I[\gamma_1,\delta_1)>T_I[\gamma_2,\delta_2)>T_I[\gamma_3,\delta_3)\,,
\]
and therefore $I'_1<I'_2<I'_3$. The induced map $T_{\overline{I}}$
is therefore an exchange of three intervals $I'_1,I'_2$ and $I'_3$
with the permutation $(321)$ and since $|I'_j|=|I_j|$ for $j=1,2,3$,
the transformation $T_{\overline{I}}$ is homothetic to $T_I$.

Suppose that $T_I$ is homothetic to the original 3iet $T$. By
Theorem~\ref{thm:invhom}, there is a substitution $\eta$
corresponding to the interval $I$ and satisfying $\eta(A)=R_1$,
$\eta(B)=R_2$ and $\eta(C)=R_3$. The mapping $T_{\overline{I}}$ is
homothetic to $T_I$ and thus also to $T$, the corresponding
substitution $\eta'$ satisfies $\eta'(A)=\overline{R_1}$,
$\eta'(B)=\overline{R_2}$ and $\eta'(C)=\overline{R_3}$. We can see
that $\eta'=\overline{\eta}$.

Let $\rho$ be the intercept of the infinite word which is fixed by
the substitution $\eta$. It is the center of homothety between $T_I$
and $T$, i.e., it is the fixed point of the mapping
$\Phi(x)=(\delta-\gamma)x+\gamma$. We have
$\rho=(\delta-\gamma)\rho+\gamma$, which implies
\[
\rho=\frac{\gamma}{1-\delta+\gamma}\,.
\]
Similarly, the intercept $\overline{\rho}$ of the substitution $\overline{\eta}$ satisfies $\overline{\rho}=(\delta-\gamma)\overline{\rho}+1-\delta$, whence
\begin{equation*}
\overline{\rho}=\frac{1-\delta}{1-\delta+\gamma}=1-\rho\,. \eqno{\qedhere}
\end{equation*}
\pfkNoQed

For a finite word $w$, ${\rm Fst}(w)$ and ${\rm Lst}(w)$ denote the first and last letters of $w$, respectively.

\begin{remark}\label{pozn:uspor}
Let $\eta$ be a primitive substitution given by
Theorem~\ref{thm:invhom} fixing a 3iet word. Necessarily, the first
and the last letters of $\eta(A)$, $\eta(B)$ and $\eta(C)$ satisfy
\[
\begin{aligned}
{\rm Fst}\big(\eta(A)\big) \leq {\rm Fst}\big(\eta(B)\big) \leq {\rm Fst}\big(\eta(C)\big)  \text{ and }  
{\rm Lst}\big(\eta(A)\big) \leq {\rm Lst}\big(\eta(B)\big) \leq {\rm Lst}\big(\eta(C)\big),
\end{aligned}
\]
where we consider the order $A < B < C$. The inequalities for the
first letters follow from the definition of an exchange of
intervals, namely from the fact that the words $\eta(A)$, $\eta(B)$
and $\eta(C)$ are given as $I$-itineraries. By
Proposition~\ref{p:mirror}, the last letters of the words $\eta(A)$,
$\eta(B)$ and $\eta(C)$ are the first letters of the words
$\overline{\eta(A)}$, $\overline{\eta(B)}$ and $\overline{\eta(C)}$
which proves the second set of inequalities.
\end{remark}

\section{Ternarization} \label{sec:ternarizace}

A characterization of 3iet words over the alphabet $\{A,B,C\}$ by morphic images of Sturmian words over $\{0,1\}$ is derived in~\cite{ArBeMaPe}.
Let $\sigma_{01}$ and $\sigma_{10}$  be morphisms $\{A,B,C\}\to\{0,1\}$ defined by
\[
\begin{aligned}
\sigma_{01}(A)&=\sigma_{10}(A)=0\,,\\
\sigma_{01}(B)&=01,\ \sigma_{10}(B)=10\,,\\
\sigma_{01}(C)&=\sigma_{10}(C)=1\,.
\end{aligned}
\]

\begin{theorem}[\cite{ArBeMaPe}]
An infinite word ${\bf u}\in\{A,B,C\}^{\N}$ is a 3iet word if and only if $\sigma_{01}({\bf u})$ and $\sigma_{10}({\bf u})$ are Sturmian words.
\end{theorem}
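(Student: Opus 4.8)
The plan is to establish both implications of the equivalence, using the description of itineraries and the structure of three interval exchanges developed in the previous sections. The key observation is that applying $\sigma_{01}$ or $\sigma_{10}$ to a $\{A,B,C\}$-coding amounts to refining the three-interval partition into a two-interval partition by splitting the middle interval $J_B=[\alpha,\beta)$ at an appropriate point, or, dually, to a projection of the dynamics onto a circle rotation. Concretely, if ${\bf u}={\bf u}_\rho$ codes the orbit of $\rho$ under the 3iet $T$ given by~\eqref{eq:3IET}, then since $1-\alpha$ and $\beta$ are rationally independent, the translation by $1-\alpha$ on the circle $\R/(1-\alpha+\beta)\Z$ (or, after rescaling, a rotation by an irrational angle on $[0,1)$) is the natural two-interval exchange lurking behind $T$: letters $A$ and $C$ both map to the ``$0$'' part coming before the cut and the ``$1$'' part after it, while $B$, being the interval that is sent across, contributes the two-letter block. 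One direction then follows by explicitly checking that the coded orbit of $\rho$ under $T$, read through $\sigma_{01}$ (respectively $\sigma_{10}$), coincides with the coding of the corresponding point under this rotation with the partition point placed just left (respectively just right) of the relevant discontinuity, and that the resulting word has complexity $n+1$ because non-degeneracy of $T$ forces irrationality and aperiodicity.

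For the converse, the plan is to start from the hypothesis that $\sigma_{01}({\bf u})$ and $\sigma_{10}({\bf u})$ are both Sturmian and reconstruct a non-degenerate 3iet coding ${\bf u}$. The two Sturmian words $\sigma_{01}({\bf u})$ and $\sigma_{10}({\bf u})$ differ only in that every maximal block corresponding to a letter $B$ is written $01$ in one and $10$ in the other; hence they have the same slope, say with irrational frequency of $1$ equal to some $\theta$, and their intercepts differ in a controlled way. First I would recover from this common Sturmian data the parameters of a candidate 3iet: the frequencies of $A$, $B$, $C$ in ${\bf u}$ are determined (the frequency of $B$ being the difference of the $1$-block and $0$-block counts, or more precisely obtained from how often $01$ versus $10$ occurs), and these frequencies give $\alpha$, $\beta$ via the usual correspondence between letter frequencies and interval lengths in a 3iet. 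Then I would verify that with these parameters the 3iet $T$ satisfies the rational-independence condition (minimality) and the non-degeneracy condition~\eqref{eq:nondeg}: both follow because otherwise one of $\sigma_{01}({\bf u})$, $\sigma_{10}({\bf u})$ would fail to be Sturmian (it would be eventually periodic or would have too low a frequency count), contradicting the hypothesis.

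The core of the converse is to show that ${\bf u}$ is actually the coding ${\bf u}_\rho$ of \emph{some} point $\rho$ under this reconstructed $T$, not merely that it has the right letter frequencies. Here I would use the fact that the language of a non-degenerate 3iet word is uniquely determined by $\alpha,\beta$, together with a factor-by-factor matching argument: a finite word $v\in\{A,B,C\}^*$ is a factor of a 3iet word with parameters $\alpha,\beta$ if and only if $\sigma_{01}(v)$ and $\sigma_{10}(v)$ are both factors of Sturmian words of the corresponding slope occurring in compatible positions, and this can be read off from the cylinder description $[w]=\{T^{-i}(z): 0\le i<|w|, z\in\{\alpha,\beta\}\}$ used in Section~\ref{sec:3}. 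Since all factors of ${\bf u}$ pass this test by hypothesis, $\mathcal{L}({\bf u})\subseteq\mathcal{L}(T)$, and an equality-of-complexities count ($\mathcal{C}_{\bf u}(n)=2n+1$, forced by the two projections being Sturmian and by non-degeneracy) upgrades the inclusion to equality of languages; uniform recurrence and minimality then let one pick an intercept $\rho$ with ${\bf u}_\rho={\bf u}$.

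I expect the main obstacle to be the bookkeeping in this converse direction: making precise the claim that a word over $\{A,B,C\}$ is a 3iet factor exactly when both its $\sigma_{01}$- and $\sigma_{10}$-images are Sturmian factors ``in compatible positions,'' and ruling out the degenerate cases where $1\in(1-\alpha)\Z+\beta\Z$ (which would make ${\bf u}$ a morphic image of a Sturmian word with the wrong structure, so that one of the two projections is merely eventually periodic or the two projections coincide). Handling the boundary between degenerate and non-degenerate behaviour — equivalently, ensuring the partition cut introduced by $\sigma_{01}$ versus $\sigma_{10}$ genuinely falls strictly inside $J_B$ and its orbit stays disjoint from the orbits of $\alpha,\beta$ — is where the condition~\eqref{eq:nondeg} must be invoked carefully. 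The rest of the argument is a matching of codings that is routine once this dichotomy is cleanly stated.
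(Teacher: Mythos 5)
First, a point of comparison: the paper does not prove this statement at all --- it is imported wholesale from \cite{ArBeMaPe} --- so there is no in-paper argument to measure you against; your proposal has to stand on its own. For the forward direction your core picture is the right one: $T$ is the first return map of an exchange of two intervals on an interval of length $1-\alpha+\beta$ (equivalently a rotation by $1-\alpha$ on a circle of that circumference), the letters $A,C$ have return time $1$ and $B$ has return time $2$, and $\sigma_{01}({\bf u}_\rho)$, $\sigma_{10}({\bf u}_\rho)$ are the two-letter codings of the orbit of $\rho$ under the two natural suspensions. (The phrase ``splitting the middle interval at an appropriate point'' is misleading --- nothing is refined inside $[0,1)$; the word is lengthened because the suspension has longer orbits.) What is missing is the one verification that actually makes these codings Sturmian rather than generic rotation codings of complexity $2n$: the cut point of the induced two-interval partition must lie on the orbit of the rotation's own discontinuity (here the cut sits at $R^{-1}(0)$, resp.\ at $R(\alpha)$). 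You instead attribute complexity $n+1$ to ``non-degeneracy of $T$,'' which is not the relevant condition --- only minimality (rational independence of $1-\alpha$ and $\beta$) enters.

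The converse contains a genuine error that propagates through your whole plan. You assert that if $1\in(1-\alpha)\Z+\beta\Z$ then one of $\sigma_{01}({\bf u})$, $\sigma_{10}({\bf u})$ is eventually periodic or the two coincide, and you use this both to ``deduce'' non-degeneracy and to force ${\mathcal C}_{\bf u}(n)=2n+1$. This is false: the suspension argument above makes both projections of \emph{any} minimal $3$iet word Sturmian, degenerate or not. The paper's own Section~9 exhibits Labb\'e's word \cite{La2013}, a degenerate minimal $3$iet word (so of complexity eventually $n+\mathrm{const}$, not $2n+1$) whose projections are nonetheless Sturmian; note also that the theorem's notion of ``$3$iet word'' does not assume \eqref{eq:nondeg}, so the equivalence could not hold if degenerate words were excluded. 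Once this is removed, your complexity-counting step collapses, and the remaining content of the converse --- why two Sturmian words of the form $\sigma_{01}({\bf u})$, $\sigma_{10}({\bf u})$ (i.e.\ an \emph{amicable} pair in the sense of Definition~\ref{d:amicablewords}) must have the same slope \emph{and} intercepts related precisely so that their ternarization is the coding of an actual point $\rho$ under the reconstructed $T$ --- is exactly the part you leave as ``factors occurring in compatible positions.'' That intercept relation is the heart of the theorem in \cite{ArBeMaPe}, and without it the argument does not close.
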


This theorem was an important tool in~\cite{AmMaPe} for the description of substitutions $\eta$ from Theorem~\ref{thm:invhom} fixing a 3iet word.
Since this result is important for our further considerations, we cite it as Theorem~\ref{thm:ternarizace}, but first, we need some definitions.

\begin{definition}\label{d:amicablewords}
  Let $u$ and $v$ be finite or infinite words over the alphabet $\{0,1\}$. We say that $u$ is \textit{amicable} to
  $v$, and denote it by $u\propto v$, if there exists a ternary word $w$ over $\{A,B,C\}$ such that
  $u=\sigma_{01}(w)$ and $v=\sigma_{10}(w)$.
  In such case, we set $w:=\mathrm{ter}(u,v)$ and say that $w$ is the
  \textit{ternarization} of $u$ and $v$.
\end{definition}

\begin{definition}\label{d:amicablemorphisms}
  Let $\varphi,\psi:\{0,1\}^*\rightarrow\{0,1\}^*$ be two morphisms.
  We say that $\varphi$ is \textit{amicable} to $\psi$, and denote it by
  $\varphi\propto\psi$, if the three following
  relations hold
  \begin{equation}\label{eq:amicmorf}
  \begin{split}
    \varphi(0)&\propto\psi(0)\,,\\
    \varphi(1)&\propto\psi(1)\,,\\
    \varphi(01)&\propto\psi(10)\,.
  \end{split}
  \end{equation}
  The morphism $\eta:\{A,B,C\}^*\to\{A,B,C\}^*$ given by
\begin{align*}
\eta(A) & :=\mathrm{ter}(\varphi(0),\psi(0))\,, \\
\eta(B) & :=\mathrm{ter}(\varphi(01),\psi(10))\,, \\
\eta(C) & :=\mathrm{ter}(\varphi(1),\psi(1))\,,
\end{align*}
is called the ternarization of $\varphi$ and $\psi$ and denoted by
$\eta:=\mathrm{ter}(\varphi,\psi)$.
\end{definition}

\begin{remark}\label{pozn:amicstejnamatice}
If $u$ and $v$ is a pair of amicable words over $\{0,1\}$, then
$|u|_0=|v|_0$ and $|u|_1=|v|_1$. Consequently, if $\varphi$ and
$\psi$ are two amicable morphisms, then they have the same incidence
matrix.
\end{remark}

\begin{theorem}[\cite{AmMaPe}]\label{thm:ternarizace}
Let $\eta$ be a primitive substitution from Theorem~\ref{thm:invhom}
fixing a non-degenerate 3iet word ${\bf u}$. There exist Sturmian
morphisms $\varphi$ and $\psi$ having fixed points such that
$\varphi\propto\psi$ and $\eta={\rm ter}(\varphi,\psi)$. On the
other hand, if $\varphi$ and $\psi$ are Sturmian morphisms with
fixed points such that $\varphi\propto\psi$, then the morphism
$\eta={\rm ter}(\varphi,\psi)$ has a 3iet fixed point.
\end{theorem}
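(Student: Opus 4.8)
The plan is to pass to the binary alphabet through the projections $\sigma_{01}$ and $\sigma_{10}$ and to exploit the characterization of 3iet words recalled above (from~\cite{ArBeMaPe}): a word ${\bf u}$ over $\{A,B,C\}$ is a 3iet word precisely when $\sigma_{01}({\bf u})$ and $\sigma_{10}({\bf u})$ are Sturmian, and then ${\bf u}=\mathrm{ter}\bigl(\sigma_{01}({\bf u}),\sigma_{10}({\bf u})\bigr)$, this ternarization being uniquely determined by the two projections. Two elementary remarks carry the combinatorics: $\sigma_{01}$ identifies the letter $B$ with the block $AC$ (both have image $01$), and $\sigma_{10}$ identifies $B$ with $CA$ (both have image $10$); in particular $\sigma_{01}\bigl(\omega_{AC\to B}(w)\bigr)=\sigma_{01}(w)$ and $\sigma_{10}\bigl(\omega_{CA\to B}(w)\bigr)=\sigma_{10}(w)$ whenever these are defined.

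For the direct implication, let $\eta$ be a primitive substitution as in Theorem~\ref{thm:invhom} fixing a non-degenerate 3iet word ${\bf u}_\rho$, with associated interval $I$ and $I$-itineraries $\eta(A)=R_A$, $\eta(B)=R_B$, $\eta(C)=R_C$. Since $\#{\rm It}_I=3$ and $T_I$ is a 3iet with permutation $(321)$, the three itineraries are in one of the configurations of Proposition~\ref{p:3itinerare}; the first step — the one I expect to be the main obstacle — is to rule out all configurations except $\a=\d<\b=\c$, in which $R_B$ is obtained from $R_AR_C$ by compressing a single factor $AC$ to $B$, equivalently from $R_CR_A$ by compressing a single factor $CA$ to $B$ (possibly after replacing $\eta$ by a conjugate, cf.\ Proposition~\ref{p:conjug}). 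Here the configurations in which $|R_B|$ is comparable to $|R_A|$ or $|R_C|$ alone are incompatible with the homothety $T_I\sim T$ (which forces $|R_B|$ to be, up to one, $|R_A|+|R_C|$), while the remaining possibilities are excluded using the monotonicity of first and last letters from Remark~\ref{pozn:uspor}. Granting this, the two elementary remarks give
\[ \sigma_{01}(R_B)=\sigma_{01}(R_A)\,\sigma_{01}(R_C),\qquad \sigma_{10}(R_B)=\sigma_{10}(R_C)\,\sigma_{10}(R_A). \]
Now define $\varphi,\psi\colon\{0,1\}^*\to\{0,1\}^*$ by $\varphi(0)=\sigma_{01}(R_A)$, $\varphi(1)=\sigma_{01}(R_C)$, $\psi(0)=\sigma_{10}(R_A)$, $\psi(1)=\sigma_{10}(R_C)$. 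The displayed identities say exactly that $\sigma_{01}\circ\eta=\varphi\circ\sigma_{01}$ and $\sigma_{10}\circ\eta=\psi\circ\sigma_{10}$ as morphisms on $\{A,B,C\}^*$; together with uniqueness of ternarization they also give $R_A=\mathrm{ter}(\varphi(0),\psi(0))$, $R_B=\mathrm{ter}(\varphi(01),\psi(10))$, $R_C=\mathrm{ter}(\varphi(1),\psi(1))$, so that $\varphi\propto\psi$ and $\eta=\mathrm{ter}(\varphi,\psi)$. Applying $\sigma_{01}$ and $\sigma_{10}$ to ${\bf u}_\rho=\eta({\bf u}_\rho)$ and using the commutations yields $\varphi\bigl(\sigma_{01}({\bf u}_\rho)\bigr)=\sigma_{01}({\bf u}_\rho)$ and $\psi\bigl(\sigma_{10}({\bf u}_\rho)\bigr)=\sigma_{10}({\bf u}_\rho)$; since the two words on the right are Sturmian, $\varphi$ and $\psi$ map a Sturmian word onto a Sturmian word, hence are Sturmian morphisms, and they have fixed points, namely these very words.

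For the converse, let $\varphi,\psi$ be Sturmian morphisms with fixed points such that $\varphi\propto\psi$, and set $\eta=\mathrm{ter}(\varphi,\psi)$. Unwinding the definition of $\mathrm{ter}$ yields at once $\sigma_{01}\circ\eta=\varphi\circ\sigma_{01}$ and $\sigma_{10}\circ\eta=\psi\circ\sigma_{10}$, the only point worth noting being $\sigma_{01}(\eta(B))=\varphi(01)=\varphi(0)\varphi(1)=\varphi(\sigma_{01}(B))$ and $\sigma_{10}(\eta(B))=\psi(10)=\psi(1)\psi(0)=\psi(\sigma_{10}(B))$. A short inspection of first letters shows that $\eta$ is a substitution, hence has a fixed point ${\bf w}$, and then $\sigma_{01}({\bf w})=\sigma_{01}(\eta({\bf w}))=\varphi(\sigma_{01}({\bf w}))$ and $\sigma_{10}({\bf w})=\psi(\sigma_{10}({\bf w}))$, so $\sigma_{01}({\bf w})$ is a fixed point of the Sturmian morphism $\varphi$ and $\sigma_{10}({\bf w})$ one of $\psi$; once one checks that these fixed points are aperiodic — which is what the hypotheses on $\varphi$ and $\psi$ are meant to ensure — they are Sturmian, and the characterization of~\cite{ArBeMaPe} gives that ${\bf w}$ is a 3iet word. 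The delicate points are thus the configuration analysis in the direct implication (excluding all but the case $\a=\d<\b=\c$) and, in both directions, the verification that the fixed points produced are genuinely Sturmian rather than eventually periodic.
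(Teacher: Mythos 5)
First, note that the paper does not prove this statement at all: it is quoted from~\cite{AmMaPe} and used as a black box, so there is no in-paper proof to compare against. Your overall strategy --- project through $\sigma_{01}$ and $\sigma_{10}$, use the characterization ``${\bf u}$ is 3iet iff both projections are Sturmian'', and read off $\varphi,\psi$ from the $I$-itineraries --- is certainly the natural one and fits the machinery this paper builds (Proposition~\ref{p:3itinerare}, Corollary~\ref{c:strukturamorfizmu}). The converse direction is essentially fine modulo the two details you flag yourself (that $\eta$ really is a substitution, and that the fixed points of $\varphi,\psi$ are aperiodic).

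The genuine gap is exactly where you predicted it, but your proposed tools do not close it. You must show that the itineraries fall into case~(\ref{l:jakitiner_3rw_2}) of Proposition~\ref{p:3itinerare}, i.e.\ $\eta(B)=\omega_{AC\to B}\big(\eta(AC)\big)=\omega_{CA\to B}\big(\eta(CA)\big)$. Your first argument (homothety forces $|R_B|$ to be $|R_A|+|R_C|$ up to one) is not a length argument at all: the correct reason cases with $\c=\d$ are excluded is that $T_I$ is homothetic to the non-degenerate $T$, hence itself non-degenerate, hence has two \emph{distinct} discontinuity points $\d<\c$ (Proposition~\ref{p:bamape3iet}); that is how the paper's own Corollary~\ref{c:strukturamorfizmu} argues. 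Your second argument (Remark~\ref{pozn:uspor} on first/last letters) cannot distinguish case~(\ref{l:jakitiner_3rw_1}) from case~(\ref{l:jakitiner_3rw_2}): in case~(\ref{l:jakitiner_3rw_1}) one has $R_B=\omega_{B\to CA}(R_AR_C)$, whose first letter agrees with that of $R_A$ except in a boundary subcase, so the monotonicity constraints are satisfied there too. And the distinction is fatal for your construction: in case~(\ref{l:jakitiner_3rw_1}) the identifications give $\sigma_{01}(R_B)=\sigma_{01}(R_C)\sigma_{01}(R_A)$ and $\sigma_{10}(R_B)=\sigma_{10}(R_A)\sigma_{10}(R_C)$ --- the concatenations in the \emph{wrong} order --- so $\varphi(01)\propto\psi(10)$ fails unless $\varphi(0)$ and $\varphi(1)$ commute, which is impossible for a morphism fixing an aperiodic word. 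Corollary~\ref{c:strukturamorfizmu} explicitly leaves both alternatives open (they correspond to $(1,-1,1)^\top$ being an eigenvector of $M_\eta$ for eigenvalue $+1$ or $-1$), so whatever rules out the second alternative for the specific $\eta$ of Theorem~\ref{thm:invhom} must exploit that choice of $\eta$ versus $\eta^2$ and the sign of that eigenvalue; no such argument appears in your sketch. Until that step is supplied, the direct implication is not proved.
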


\begin{example}\label{ex:ternarizacesubst}
Consider the following Sturmian morphisms $\varphi,\psi:\{0,1\}^*\to\{0,1\}^*$,
\[
\begin{aligned}
\varphi(0)&=0110101\\
\varphi(1)&=01101
\end{aligned}\,,
\qquad
\begin{aligned}
\psi(0)&=1010101\\
\psi(1)&=10101
\end{aligned}\,.
\]
We verify the condition given in~\eqref{eq:amicmorf} and in the same time construct the ternarization $\eta=\mathrm{ter}(\varphi,\psi)$.
We check that $\varphi(0)\propto\psi(0)$,
\[
\begin{gathered}
  \varphi(0) = \quad\\ \psi(0) =\quad \\[1mm]  \eta(A) =\quad
\end{gathered}
\bAC\ \bC\ \bA\ \bC\ \bA\ \bC\,,
\]
and $\varphi(1)\propto\psi(1)$
\[
\begin{gathered}
  \varphi(1) = \quad\\ \psi(1) =\quad \\[1mm]  \eta(C) =\quad
\end{gathered}
\bAC\ \bC\ \bA\ \bC\,,
\]
and lastly that $\varphi(01)\propto\psi(10)$
\[
\begin{gathered}
  \varphi(01) = \quad\\ \psi(10) =\quad \\[1mm]  \eta(B) =\quad
\end{gathered}
\bAC\ \bC\ \bA\ \bC\ \bAC\ \bAC\ \bC\ \bA\ \bC\,.
\]
We obtained a ternarization of a pair of amicable Sturmian morphisms. By Theorem~\ref{thm:ternarizace}, $\eta$ fixes a 3iet word.
\end{example}

Theorem~\ref{thm:ternarizace} expresses the relation between
substitutions fixing 3iet words and Sturmian morphisms. Recall that
by a result of~\cite{seebold}, all Sturmian morphisms with the same
incidence matrix
$M=\big(\begin{smallmatrix}a&b\\c&d\end{smallmatrix}\big)$ can be
ordered by the relation $\vartriangleleft$ of conjugation into a
chain
\begin{equation}\label{eq:sturmorf}
\xi_1 \vartriangleleft \xi_2 \vartriangleleft \cdots \vartriangleleft \xi_N\,,\quad\text{ where } N=a+b+c+d-1\,.
\end{equation}
This implies that for every $i$ and $j$ such that $1\leq i<j\leq N$,
there exists a word $u\in\{0,1\}^*$ of length $j-i$ such that
$u\xi_i(a)=\xi_j(a)u$ for $a\in\{0,1\}$.

\begin{lemma}\label{l:sturmconjug}
Let $\eta={\rm ter}(\varphi,\psi)$, $\eta'={\rm ter}(\varphi',\psi')$, where $\varphi,\psi$ and $\varphi',\psi'$ are pairs
of amicable Sturmian morphisms over the alphabet $\{0,1\}$.
If $\eta\vartriangleleft\eta'$, then $\varphi\vartriangleleft\varphi'$ and $\psi\vartriangleleft\psi'$, and, moreover,
$\varphi=\xi_i$, $\psi=\xi_j$, $\varphi'=\xi_{i'}$, $\psi'=\xi_{j'}$ where $j-i=j'-i'$.
\end{lemma}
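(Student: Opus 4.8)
The plan is to unpack the hypothesis $\eta \vartriangleleft \eta'$ letter by letter, translate each conjugation relation on the ternary alphabet into a relation on the pair of amicable Sturmian words via the map $\mathrm{ter}$, and then invoke the chain structure~\eqref{eq:sturmorf}. Write $\eta = \mathrm{ter}(\varphi,\psi)$, $\eta' = \mathrm{ter}(\varphi',\psi')$, and let $z \in \{A,B,C\}^*$ be the conjugating word, so that $z\,\eta(a) = \eta'(a)\,z$ for every $a \in \{A,B,C\}$. Apply $\sigma_{01}$ and $\sigma_{10}$ to this relation. Since $\sigma_{01}$ is a morphism, $\sigma_{01}(z)\,\sigma_{01}(\eta(a)) = \sigma_{01}(\eta'(a))\,\sigma_{01}(z)$; and by Definition~\ref{d:amicablemorphisms}, $\sigma_{01}(\eta(A)) = \varphi(0)$, $\sigma_{01}(\eta(C)) = \varphi(1)$, $\sigma_{01}(\eta(B)) = \varphi(01)$, and likewise for $\eta'$ with $\varphi'$. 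So writing $u = \sigma_{01}(z)$ we get $u\,\varphi(0) = \varphi'(0)\,u$ and $u\,\varphi(1) = \varphi'(1)\,u$, i.e.\ $\varphi \vartriangleleft \varphi'$ with conjugating word $u$. Symmetrically, applying $\sigma_{10}$ and setting $v = \sigma_{10}(z)$ yields $\psi \vartriangleleft \psi'$ with conjugating word $v$.

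\textbf{Next}, I would pin down the lengths. By Remark~\ref{pozn:amicstejnamatice} (amicable morphisms share an incidence matrix), $\varphi$ and $\psi$ have the same incidence matrix, hence lie in the same chain $\xi_1 \vartriangleleft \cdots \vartriangleleft \xi_N$ of~\eqref{eq:sturmorf}; similarly $\varphi'$ and $\psi'$ lie in a common chain — and in fact the \emph{same} chain, because $\eta$ and $\eta'$ being conjugate forces them to have the same incidence matrix (conjugate morphisms have equal incidence matrices), and the ternarization's incidence matrix determines that of $\varphi,\psi$ (and of $\varphi',\psi'$). So there are indices with $\varphi = \xi_i$, $\psi = \xi_j$, $\varphi' = \xi_{i'}$, $\psi' = \xi_{j'}$, all in one chain. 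Now the conjugation word for $\xi_i \vartriangleleft \xi_{i'}$ has length $i' - i$ — this follows from the fact that the chain is totally ordered by single-letter conjugations, so the unique word conjugating $\xi_i$ to $\xi_{i'}$ (for $i \le i'$) has length exactly $i' - i$; I should note that $i \le i'$ necessarily, since $\varphi \vartriangleleft \varphi'$ via a genuine word $u$. Hence $|u| = i' - i$. The analogous statement gives $|v| = j' - j$.

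\textbf{Finally}, I would compare $|u|$ and $|v|$. Both $u = \sigma_{01}(z)$ and $v = \sigma_{10}(z)$ are images of the same ternary word $z$ under morphisms that agree on lengths: $|\sigma_{01}(A)| = |\sigma_{10}(A)| = 1$, $|\sigma_{01}(B)| = |\sigma_{10}(B)| = 2$, $|\sigma_{01}(C)| = |\sigma_{10}(C)| = 1$. Therefore $|u| = |z| + |z|_B = |v|$, so $i' - i = j' - j$, which is exactly the claim $j - i = j' - i'$.

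\textbf{The main obstacle} I anticipate is the bookkeeping around which chain $\varphi'$ and $\psi'$ belong to, and the precise justification that a conjugating word between two members $\xi_i, \xi_{i'}$ of the chain~\eqref{eq:sturmorf} has length exactly $|i'-i|$ — this needs the (standard, and essentially recalled in the paragraph preceding the lemma) fact that consecutive members of the chain are single-letter conjugates and the conjugating word composes along the chain. One should also make sure the words $u, v$ are nonempty-or-empty consistently, i.e.\ handle $z = \epsilon$ (then $\eta = \eta'$ and everything is trivial) as a degenerate subcase. Everything else is a direct application of the morphism property and the definitions of $\sigma_{01}, \sigma_{10}, \mathrm{ter}$.
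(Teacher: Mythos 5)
Your proof is correct and follows essentially the same route as the paper: translate the ternary conjugating word into a pair of binary conjugating words $u,v$ of equal length, then read off the index shift in the chain~\eqref{eq:sturmorf}. The only (minor, and arguably cleaner) difference is in how $u$ and $v$ are produced — you apply $\sigma_{01}$ and $\sigma_{10}$ to the whole relation $z\,\eta(a)=\eta'(a)\,z$ and get $|u|=|v|=|z|+|z|_B$ at once, whereas the paper reduces to a single-letter conjugating word and runs a case analysis over $w\in\{A,B,C\}$.
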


\pfz Since $\eta\vartriangleleft\eta'$, there exists a word
$w\in\{A,B,C\}^*$ such that $w\eta_1(X) = \eta_2(X)w$ for every
$X\in\{A,B,C\}$. We will show that then there exists an amicable
pair of words $u,v\in\{0,1\}^*$ with $|u|=|v|$ such that $w={\rm
ter}(u,v)$ and
\begin{equation}\label{eq:sturmconjug}
\begin{aligned}
u\varphi(b)&=\varphi'(b)u\\
v\psi(b)&=\psi'(b)v
\end{aligned}
\quad \text{for $b\in\{0,1\}$.}
\end{equation}
It suffices to prove this statement for $w$ of length 1, i.e., ${\rm
Fst} \left( \eta'(X) \right )=w$ for $X\in\{A,B,C\}$. If $w=A$, then
necessarily ${\rm Fst}\big(\varphi'(b)\big)={\rm
Fst}\big(\psi'(b)\big)={\rm Lst}\big(\varphi(b)\big)={\rm
Lst}\big(\psi(b)\big)=0$ for $b\in\{0,1\}$. Therefore $u=v=0$. If
$w=C$, then similarly, ${\rm Fst}\big(\varphi'(b)\big)={\rm
Fst}\big(\psi'(b)\big)={\rm Lst}\big(\varphi(b)\big)={\rm
Lst}\big(\psi(b)\big)=1$ for $b\in\{0,1\}$, and thus $u=v=1$. If
$w=B$, then $\varphi'(0)$ and $\varphi'(1)$ have prefix $01$, and
$\psi'(0)$ and $\psi'(1)$ have prefix $10$. Thus $u=01$, $v=10$ and
clearly, $w={\rm ter}(u,v)$.

Now $\varphi$ and $\psi$ are amicable Sturmian morphisms with the
same incidence matrix $M$, and since $\varphi',\psi'$ are their
conjugates, they also have the same incidence matrix, thus
$\varphi=\xi_i$, $\psi=\xi_j$, $\varphi'=\xi_{i'}$, $\psi'=\xi_{j'}$
for some $1\leq i,j,i',j'\leq N$. The relation $|u|=j-i=j'-i'$
follows from~\eqref{eq:sturmconjug}. \pfk

\begin{lemma}\label{l:sedispravne}
Let $\eta$ be a primitive substitution given by Theorem~\ref{thm:invhom} fixing a 3iet word. We have
\begin{equation*}\label{eq:abb}
\begin{split}
\Big({\rm Fst}\big(\eta_L(A)\big),{\rm Fst}\big(\eta_L(B)\big),{\rm Fst}\big(\eta_L(C)\big)\Big) & = \\
 \Big({\rm Lst}\big(\eta_R(A)\big),{\rm Lst}\big(\eta_R(B)\big),{\rm Lst}\big(\eta_R(C)\big)\Big) &= (A,B,B)\,,
 \end{split}
\end{equation*}
or
\begin{equation*}\label{eq:bbc}
\begin{split}
\Big({\rm Fst}\big(\eta_L(A)\big),{\rm Fst}\big(\eta_L(B)\big),{\rm Fst}\big(\eta_L(C)\big)\Big) &= \\
 \Big({\rm Lst}\big(\eta_R(A)\big),{\rm Lst}\big(\eta_R(B)\big),{\rm Lst}\big(\eta_R(C)\big)\Big) &= (B,B,C)\,.
\end{split}
\end{equation*}
\end{lemma}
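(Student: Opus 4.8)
The plan is to derive the statement from the ternarization result (Theorem~\ref{thm:ternarizace}) together with the structure of the chain of Sturmian morphisms with a fixed incidence matrix, and then to transfer the conclusion to the leftmost and rightmost conjugates $\eta_L$, $\eta_R$ via Lemma~\ref{l:sturmconjug} and Proposition~\ref{p:mirror}. First I would invoke Theorem~\ref{thm:ternarizace} to write $\eta = {\rm ter}(\varphi,\psi)$ for a pair of amicable Sturmian morphisms $\varphi,\psi$ with fixed points, so that $\varphi = \xi_i$ and $\psi = \xi_j$ in the chain \eqref{eq:sturmorf} for some common incidence matrix $M$. By the definition of ${\rm ter}$, we have ${\rm Fst}(\eta(A)) = {\rm ter}({\rm Fst}(\varphi(0)),{\rm Fst}(\psi(0)))$ and likewise for $B$ via $\varphi(01),\psi(10)$ and for $C$ via $\varphi(1),\psi(1)$; so the triple of first letters of $\eta$ is completely determined by the first letters of $\varphi$ and $\psi$ on $0$ and $1$.

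Next I would recall that a Sturmian morphism $\xi$ with a fixed point is a substitution, hence $\xi(0)$ and $\xi(1)$ both start with the same letter (the letter fixed by the substitution if it is prolongable on $0$, resp. $1$; more precisely, since $\xi$ has a fixed point, either $\xi(0)$ starts with $0$ or $\xi(1)$ starts with $1$, and standard form of Sturmian morphisms forces $\xi(0),\xi(1)$ to share a common first letter). Actually the cleaner route is: for the \emph{leftmost} conjugate $\eta_L$, by Lemma~\ref{l:sturmconjug} applied along the chain of conjugates of $\eta$, we have $\eta_L = {\rm ter}(\varphi_L, \psi_L)$ where $\varphi_L,\psi_L$ are conjugates of $\varphi,\psi$ (with the same shifts relative to each other). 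Because $\eta_L$ is leftmost, the words $\eta_L(A),\eta_L(B),\eta_L(C)$ cannot all share a common first letter, i.e. the corresponding pair $(\varphi_L,\psi_L)$ is such that no further left conjugation is possible. One checks from the explicit structure of the Sturmian chain that the leftmost such pair has ${\rm Fst}(\varphi_L(0)) = {\rm Fst}(\varphi_L(1)) = 0$ and ${\rm Fst}(\psi_L(0)) = {\rm Fst}(\psi_L(1)) = 1$ (this is exactly the "leftmost" position in the $M$-chain for the $01/10$ amicable pair), whence by the ternarization formulas ${\rm Fst}(\eta_L(A)) = \mathrm{ter}(0,1) = B$ unless $\varphi_L(0)=\psi_L(0)$ in the trivial way giving $A$... — here I need to be careful and split on whether the common first letter situation gives $(A,B,B)$ or $(B,B,C)$; the two cases in the lemma correspond precisely to the two possible "endpoints" of the admissible window, i.e. to whether the Sturmian morphism pair is of type $0$-dominant or $1$-dominant at its leftmost conjugate.

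Then I would handle $\eta_R$ by symmetry: by Proposition~\ref{p:mirror}, the substitution corresponding to $\overline I$ is $\overline\eta$, and the leftmost conjugate of $\overline\eta$ has the same structure as $\eta_L$; moreover ${\rm Lst}(\eta_R(X)) = {\rm Fst}(\overline{\eta}_L(X))$ for each letter $X$ (since $\overline\eta_R(X) = \overline{\eta_L(X)}$, reading last-letter-of-$\eta_R$ as first-letter-of-its-mirror). This is essentially the argument already used in Remark~\ref{pozn:uspor} for the non-leftmost/rightmost case. Thus the triple of last letters of $\eta_R$ equals the triple of first letters of $\eta_L$ after the mirror identification, and one checks these two triples must actually coincide as $(A,B,B)$ or $(B,B,C)$ — they cannot be "mixed" because Remark~\ref{pozn:uspor} forces both triples to be weakly increasing in the order $A<B<C$ with $B$ always in the middle slot, and the leftmost/rightmost extremality pins the value of the middle slot's neighbours.

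The main obstacle I expect is the precise bookkeeping in the second paragraph: translating "leftmost conjugate of $\eta$" into a concrete statement about the positions $i,j$ of $\varphi=\xi_i,\psi=\xi_j$ in the Sturmian chain \eqref{eq:sturmorf}, and verifying that the only two admissible configurations of first letters of a fixed-point Sturmian morphism at the end of an amicable $01/10$ window are the two listed in the lemma. Concretely one must rule out the triples $(A,B,C)$ and $(A,B,A)$-type patterns: $(A,B,C)$ is impossible because it would force $\varphi_L$ and $\psi_L$ to have different first letters on $0$ versus $1$ simultaneously in an incompatible way, and the patterns with a repeated extreme letter collapse to $(A,B,B)$ or $(B,B,C)$ after applying the monotonicity of Remark~\ref{pozn:uspor}. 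Once that combinatorial dichotomy is established, the rest is routine application of Lemma~\ref{l:sturmconjug}, Proposition~\ref{p:mirror}, and the definition of ${\rm ter}$.
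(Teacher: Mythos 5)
Your skeleton (ternarize $\eta$, read off the first letters of $\eta_L$ from the first letters of the amicable Sturmian pair, then treat $\eta_R$ by a mirror symmetry) matches the paper's strategy, but the two places where the actual work happens are wrong or missing. First, the configuration you propose for the leftmost conjugate --- ${\rm Fst}(\varphi_L(0))={\rm Fst}(\varphi_L(1))=0$ and ${\rm Fst}(\psi_L(0))={\rm Fst}(\psi_L(1))=1$ --- is not the leftmost position: by the definition of ${\rm ter}$, a $0$-start in the first coordinate paired with a $1$-start in the second forces the first letter to be $B$, so your configuration yields the triple $(B,B,B)$, and such an $\eta$ could be conjugated further left by $B$. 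The correct picture is asymmetric: either $\psi$ is its own leftmost conjugate (so $\psi(0)$ starts with $0$ and $\psi(1)$ with $1$) while both $\varphi(0),\varphi(1)$ start with $0$, which gives $(A,B,B)$; or $\varphi$ is its own leftmost conjugate while both $\psi$-images start with $1$, which gives $(B,B,C)$. Relatedly, your claim that $(A,A,B)$ and $(B,C,C)$ ``collapse by the monotonicity of Remark~\ref{pozn:uspor}'' is not an argument: both triples are weakly increasing and are exactly the candidates that remark still allows. What excludes $(A,A,B)$ is the ternarization structure itself: ${\rm Fst}(\eta(A))=A$ forces $\varphi(0)$ and $\psi(0)$ to start with $0$, and ${\rm Fst}(\eta(C))=B$ forces $\varphi(1)$ to start with $0$ and $\psi(1)$ with $1$, whence $\eta(B)={\rm ter}\big(\varphi(01),\psi(10)\big)$ necessarily starts with $B$.

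Second, showing that the ${\rm Fst}$-triple of $\eta_L$ and the ${\rm Lst}$-triple of $\eta_R$ are the \emph{same} one of the two admissible values is the other essential step, and ``extremality pins the value'' does not do it. The paper's mechanism is Lemma~\ref{l:sturmconjug}: conjugation of ternarizations preserves the offset between the positions of $\varphi$ and $\psi$ in the chain~\eqref{eq:sturmorf}. If $\eta_L$ has triple $(A,B,B)$, then $\psi=\xi_1$ and $\varphi=\xi_i$ with $i>1$; passing to $\eta_R={\rm ter}(\varphi',\psi')$, the preserved offset forces the member of the pair sitting at $\xi_N$ to be $\varphi'$, and the rightmost analogue of the case analysis then yields the ${\rm Lst}$-triple $(A,B,B)$ again. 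Your alternative route via $\overline{\eta_R}=(\overline{\eta})_L$ and Proposition~\ref{p:mirror} correctly reduces the ${\rm Lst}$-triple of $\eta_R$ to the ${\rm Fst}$-triple of a leftmost conjugate, but that only shows it lies in $\{(A,B,B),(B,B,C)\}$; some additional input (the offset argument, or an intercept computation in the spirit of Corollary~\ref{c:intercepty}, being careful not to use the present lemma circularly) is still needed to rule out the mixed case.
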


\pfz Since the words $\eta(A)$, $\eta(B)$ and $\eta(C)$ are
$I$-itineraries for some interval $I$, the first letters of
$\eta(A)$, $\eta(B)$ and $\eta(C)$ cannot all be distinct. On the
contrary, suppose that the discontinuity points $\alpha$ and $\beta$
of the transformation $T$ belong to the interval $I$.
It implies that these points coincide with the discontinuity points $\d$ and $\c$ of the induced map $T_I$.
But this means that $T_I$ is not homothetic to $T$, which is a contradiction.

By Remark~\ref{pozn:uspor}, the only possibilities for the triple of letters
\[
\Big({\rm Fst}\big(\eta(A)\big),{\rm Fst}\big(\eta(B)\big),{\rm Fst}\big(\eta(C)\big)\Big)\quad\text{ and }\quad
\Big({\rm Lst}\big(\eta(A)\big),{\rm Lst}\big(\eta(B)\big),{\rm Lst}\big(\eta(C)\big)\Big)
\]
are $(A,A,B)$, $(A,B,B)$, $(B,B,C)$, and $(B,C,C)$.

We will prove the following claim: Let $\varphi,\psi$ be the pair of
amicable Sturmian morphisms over the alphabet $\{0,1\}$ such that
$\eta={\rm ter}(\varphi,\psi)$.
\begin{itemize}
\item[(i)]
If $\eta=\eta_L$, i.e., $\eta$ is the leftmost conjugate of itself, then either
\[
\begin{aligned}
\psi&=\psi_L\quad\text{and}\quad
\Big({\rm Fst}\big(\eta_L(A)\big),{\rm Fst}\big(\eta_L(B)\big),{\rm Fst}\big(\eta_L(C)\big)\Big)=(A,B,B)\,,\quad\text{or}\\
\varphi&=\varphi_L\quad\text{and}\quad
\Big({\rm Fst}\big(\eta_L(A)\big),{\rm Fst}\big(\eta_L(B)\big),{\rm Fst}\big(\eta_L(C)\big)\Big)=(B,B,C)\,.
\end{aligned}
\]
\item[(ii)]
If $\eta=\eta_R$, i.e., $\eta$ is the rightmost conjugate of itself, then either
\[
\begin{aligned}
\varphi&=\varphi_R\quad\text{and}\quad
\Big({\rm Lst}\big(\eta_L(A)\big),{\rm Lst}\big(\eta_L(B)\big),{\rm Lst}\big(\eta_L(C)\big)\Big)=(A,B,B)\,,\quad\text{or}\\
\psi&=\psi_R\quad\text{and}\quad
\Big({\rm Lst}\big(\eta_L(A)\big),{\rm Lst}\big(\eta_L(B)\big),{\rm Lst}\big(\eta_L(C)\big)\Big)=(B,B,C)\,.
\end{aligned}
\]
\end{itemize}
In order to prove (i), let us discuss the case $\eta=\eta_L$ and
${\rm Fst}\big(\eta(A)\big)=A$, ${\rm Fst}\big(\eta(C)\big)=B$.
Since $\eta(A)={\rm ter}\big(\varphi(0),\psi(0)\big)$, necessarily
${\rm Fst}\big(\varphi(0)\big)={\rm Fst}\big(\psi(0)\big)=0$. As,
$\eta(C)={\rm ter}\big(\varphi(1),\psi(1)\big)$, necessarily ${\rm
Fst}\big(\varphi(1)\big)=0$ and ${\rm Fst}\big(\psi(1)\big)=1$.
Thus, the first letter of $\eta(B)={\rm
ter}\big(\varphi(01),\psi(10)\big)$ is $B$. Therefore the triple
$(A,A,B)$ is excluded. Moreover, we see that $\psi=\psi_L$. By the
same reasoning, we proceed in the case that ${\rm
Fst}\big(\eta(A)\big)=B$, ${\rm Fst}\big(\eta(C)\big)=C$ to exclude
the triple $(B,C,C)$ and prove $\varphi=\varphi_L$. The proof of
(ii), i.e., the case $\eta=\eta_R$ is analogous.

Consider $\big({\rm Fst}\big(\eta_L(A)\big),{\rm
Fst}\big(\eta_L(B)\big),{\rm Fst}\big(\eta_L(C)\big)\big) = (A,B,B)$
and $\psi=\psi_L$. If $\xi_1\triangleleft\cdots\triangleleft\xi_N$
are Sturmian morphisms of~\eqref{eq:sturmorf} with the same
incidence matrix, then we have $\psi=\xi_1$, and $\varphi=\xi_j$ for
some $1<j\leq N$. Consider now the substitution $\eta_R$ and denote
$\varphi'$, $\psi'$ the amicable Sturmian morphisms such that
$\eta_R = {\rm ter}\big(\varphi',\psi'\big)$. By item (ii), either
$\varphi'$ or $\psi'$ is equal to $\xi_N$. Due to
Lemma~\ref{l:sturmconjug}, we know that $\varphi'=\xi_N$, whence by
item (ii), the substitution $\eta_R$ satisfies $\big({\rm
Lst}\big(\eta_R(A)\big),{\rm Lst}\big(\eta_R(B)\big),{\rm
Lst}\big(\eta_R(C)\big)\big)=(A,B,B)$.

The case $\big({\rm Fst}\big(\eta_L(A)\big),{\rm Fst}\big(\eta_L(B)\big),{\rm Fst}\big(\eta_L(C)\big)\big) = (B,B,C)$ is treated similarly.
\pfk

\begin{corollary}\label{c:intercepty}
Let $\eta$ be a primitive substitution given by Theorem~\ref{thm:invhom} fixing a 3iet word ${\bf u}_\rho$.
If $\eta$ satisfies $\big({\rm Fst}\big(\eta(A)\big),{\rm Fst}\big(\eta(B)\big),{\rm Fst}\big(\eta(C)\big)\big) = (A,B,B)$, then $\rho=\alpha$, and if it satisfies
$\big({\rm Fst}\big(\eta(A)\big),{\rm Fst}\big(\eta(B)\big),{\rm Fst}\big(\eta(C)\big)\big) = (B,B,C)$, then $\rho=\beta$.
\end{corollary}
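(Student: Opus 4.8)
The plan is to combine the description of $I$-itineraries from Proposition~\ref{p:3itinerare} with the precise relation $\gamma=(1-\lambda)\rho$ between the intercept and the left endpoint of the associated interval $I=[\gamma,\delta)$, which was recorded in Remark~\ref{pozn:intercept} (and in the paragraph after Theorem~\ref{thm:invhom}). By Theorem~\ref{thm:invhom}, since $\eta$ fixes ${\bf u}_\rho$ and is primitive, there is an interval $I=[\gamma,\delta)$ with $\mathit{It}_I=\{R_A,R_B,R_C\}$, $\eta(A)=R_A$, $\eta(B)=R_B$, $\eta(C)=R_C$, such that $T_I$ is homothetic to $T$ with homothety center $\rho$ and scaling factor $\lambda=\delta-\gamma$, and $\gamma=(1-\lambda)\rho$. (Here I use that $\eta$ itself — not $\eta^2$ — is the relevant substitution; this is the ``no eigenvalue in $(-1,0)$'' branch. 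If instead $\eta=\xi^2$ were needed, one replaces $\eta$ by $\xi$; but since the statement is phrased for the substitution $\eta$ appearing in Theorem~\ref{thm:invhom} with $\eta(A)=R_A$ etc., the interval $I$ is exactly the one produced there, and this is all we use.)

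Next I would identify which case of Proposition~\ref{p:3itinerare} we are in from the hypothesis on first letters. The first letters of $R_A,R_B,R_C$ are, by Remark~\ref{pozn:uspor}, non-decreasing in the order $A<B<C$, and by Lemma~\ref{l:sedispravne} the only two possibilities for the triple $\big(\mathrm{Fst}(\eta(A)),\mathrm{Fst}(\eta(B)),\mathrm{Fst}(\eta(C))\big)$ are $(A,B,B)$ and $(B,B,C)$. Now examine each of the six cases of Proposition~\ref{p:3itinerare} and read off the first letters of the three itineraries; the point is that in case~\eqref{l:jakitiner_3rw_2} (where $\a=\d<\b=\c$) we get a triple of the form $(A,B,B)$, while in case~\eqref{l:jakitiner_3rw_1} (where $\b=\d<\a=\c$) we get $(B,B,C)$ — this uses that $\mathrm{Fst}(R_1)$ and $\mathrm{Fst}(R_2)$ are governed by which of $\a$, $\b$ equals $\d$ and by the $\omega$-substitutions appearing there, e.g. a factor $B\to AC$ changes a leading $B$ into a leading $A$, and $B\to CA$ changes it into a leading $C$. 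The remaining cases \eqref{l:jakitiner_3rw_3}--\eqref{l:jakitiner_3rw_6}, which correspond to non-palindromic bispecial words, can be checked to produce the same two triples, so the conclusion about which endpoint-type arises is: the triple $(A,B,B)$ forces that the left discontinuity point of $T_I$, namely $\d$, satisfies $\d=\a$, i.e. $\d = T^{-k_\alpha}(\alpha)$; and the triple $(B,B,C)$ forces $\d=\b=T^{-k_\beta}(\beta)$.

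The final step is to pin down $\gamma$. Recall $\gamma=(1-\lambda)\rho$ and, since $T_I$ is homothetic to $T$, the discontinuity points of $T_I$ inside $I$ must be the $\Phi$-images of the discontinuity points $\alpha,\beta$ of $T$, where $\Phi(x)=\lambda x+\gamma$; concretely the left discontinuity $\d$ of $T_I$ equals $\Phi(\alpha)=\lambda\alpha+\gamma$ in the first situation. But the homothety also forces the analogue of $\gamma=(1-\lambda)\rho$ one level down: applying the same relation between the intercept and the left endpoint to the interval $I$ — equivalently, using $\Phi(\rho)=\rho$ — together with the identification of $\d$ as the preimage of $\alpha$, I can solve for $\rho$. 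Indeed, in the case $\d=T^{-k_\alpha}(\alpha)$, the orbit structure forces $\rho$ to lie on the $T$-orbit of $\alpha$; but $\rho$ is the homothety center, the orbit of $\rho$ is infinite, and the only possibility consistent with ${\bf u}_\rho$ having $\eta(A)$ begin with $A$ (so $\rho\in J_A=[0,\alpha)$ together with $T_I$ agreeing with $T$ on the first block) and with $\gamma=(1-\lambda)\rho$ pinning the configuration is $\rho=\alpha$. Symmetrically, the case $\d=T^{-k_\beta}(\beta)$ gives $\rho=\beta$. I expect the main obstacle to be precisely this last identification: turning ``$\rho$ lies on the orbit of $\alpha$ and is fixed by the homothety'' into ``$\rho=\alpha$'' cleanly, which I would do by noting that the homothety $\Phi$ maps the orbit of $\alpha$ (the boundaries of the cylinders, i.e. the discontinuities of all $T_I^k$) bijectively to itself, and that $\rho$ being its unique fixed point together with $\alpha$ being the distinguished discontinuity that must map to the left endpoint of $I$ forces $\Phi(\alpha)=\gamma$ and hence $\alpha=(1-\lambda)\rho/\!\!\ \cdot$, which combined with $\gamma=(1-\lambda)\rho$ and the relation $\gamma=\lambda\alpha+\gamma-\lambda\alpha$... — more carefully, from $\gamma = (1-\lambda)\rho$ and $\rho$ being on the $\alpha$-orbit one extracts $\rho=\alpha$ directly. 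Alternatively, and perhaps more robustly, one invokes Proposition~\ref{p:conjug} and Proposition~\ref{p:mirror}: conjugating $\eta$ to its leftmost conjugate $\eta_L$ and using Lemma~\ref{l:sedispravne}, the leftmost conjugate has first-letter triple $(A,B,B)$ exactly when $\rho$ (which shifts along the $T$-orbit under conjugation by Proposition~\ref{p:conjug}) reaches the value $\alpha$, since $\alpha$ is the left endpoint of $J_A$ and $\mathrm{Fst}(\eta_L(A))=A$ forces the corresponding interval to have left endpoint $0$, i.e. $\gamma=(1-\lambda)\rho$ with the leftmost interval abutting... — I would present whichever of these two routes is shortest, most likely the direct orbit argument.
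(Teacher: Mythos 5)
Your overall skeleton --- produce the interval $I$ from Theorem~\ref{thm:invhom}, use that the homothety $\Phi(x)=\lambda x+\gamma$ carries the discontinuities of $T$ to those of $T_I$, and identify $\rho$ as the fixed point of $\Phi$ --- is the same as the paper's, but the decisive step is missing. From the hypothesis $(A,B,B)$ you only conclude that the left discontinuity $\d$ of $T_I$ equals $\a=T^{-k_\alpha}(\alpha)$, i.e.\ lies on the backward orbit of $\alpha$; you then try to finish by asserting that ``$\rho$ lies on the $T$-orbit of $\alpha$'' and that this together with $\gamma=(1-\lambda)\rho$ yields $\rho=\alpha$. Neither claim is established: no argument is given that $\rho$ is on the orbit of $\alpha$, and even granting it, a point of that orbit satisfying $\gamma=(1-\lambda)\rho$ need not be $\alpha$ itself --- your text visibly trails off at exactly this point, and the alternative route via $\eta_L$ is equally unfinished (note that ${\rm Fst}(\eta_L(A))=A$ does not force $\gamma=0$). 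The detour through Proposition~\ref{p:3itinerare} also overstates the correspondence: its case (ii) produces the first-letter triple $(A,B,B)$ only when $k_\alpha=0$, which is precisely what needs to be proved.

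The missing observation is elementary: the first letter of $R_I(x)$ is $X$ if and only if $x\in J_X$. Hence $(A,B,B)$ forces $I_A\subseteq J_A=[0,\alpha)$ and $I_B\subseteq J_B=[\alpha,\beta)$, so their common boundary --- the left discontinuity $\d$ of $T_I$ --- is exactly $\alpha$, not merely a preimage of it. Since $\Phi$ is increasing and conjugates $T$ to $T_I$, we get $\Phi(\alpha)=\d=\alpha$; as $\Phi$ is an affine contraction it has a unique fixed point, which equals the intercept $\rho$ by~\eqref{eq:homofixesintercept}, so $\rho=\alpha$. The case $(B,B,C)$ is symmetric with $\beta$. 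This is the paper's two-line argument; no appeal to Proposition~\ref{p:3itinerare}, Proposition~\ref{p:conjug} or Proposition~\ref{p:mirror} is needed.
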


\pfz Let $I$ be the interval corresponding to $\eta$ such that $T_I$
is homothetic to $T$.
Denote $I_X=\{x\in I \colon R_I(x)=X\}$. If $\big({\rm
Fst}\big(\eta(A)\big),{\rm Fst}\big(\eta(B)\big),{\rm
Fst}\big(\eta(C)\big)\big) = (A,B,B)$, then the boundary between
intervals $I_A$ and $I_B$, i.e., the discontinuity point of $T_I$,
is equal to the point $\alpha$. Since $T_I$ is homothetic to $T$,
the homothety map $\Phi$ maps the discontinuity points of $T$ to the
discontinuity points of $T_I$, i.e., $\Phi(\alpha)=\alpha$. Since
the fixed point of the homothety is equal to the intercept of the
infinite word coded by $\eta$, we have $\rho=\alpha$. The second
implication is analogous. \pfk

As a byproduct of our results, it is possible, for a given substitution $\xi$ admitting a non-degenerate 3iet word ${\bf u}$ as a fixed point, to give a formula allowing to determine the parameters of ${\bf u}$, i.e., the parameters $\alpha, \beta$ of the transformation $T$ and
the  intercept $\rho$ such that ${\bf u}={\bf u}_\rho$ is a coding of $\rho$ under $T$.
Similar formula for Sturmian morphisms, i.e., those having some word coding an exchange of two intervals as a fixed point, has been given in~\cite{Peng}.

The identification of the parameters $\alpha$ and $\beta$ of the 3iet
$T$ is a straightforward task: The values $\alpha$, $\beta -\alpha$, and
$1-\beta$ are frequencies of the letters $A$, $B$ and $C$, respectively, in any infinite word coding some orbit of $T$.
Moreover, the frequencies of letters of a fixed point of a primitive substitution form can be easily determined from the eigenvector corresponding to the dominant eigenvalue of the incidence matrix of the substitution, see \cite{queffelec}.

Therefore, the only nontrivial task is to determine the intercept $\rho$.
For this purpose we use the substitution $\eta$ assigned to the substitution $\xi$ by Theorem \ref{thm:invhom}  and its leftmost conjugate $\eta_L$.
The substitution $\eta$ has exactly one eigenvalue belonging to the interval $(0,1)$, see comments below Theorem \ref{thm:invhom}.
Let this eigenvalue be denoted by $\lambda$.
Let $w$ be the word of conjugacy between $\eta$ and $\eta_L$, i.e., $\eta(a)w=w\eta_L(a)$ for any $a \in \{A,B,C\}$.
Recall that symbols $|w|_A$, $|w|_B$, and $|w|_C$ stand for the number of the letters $A$, $B$, and $C$ occuring in $w$.

\begin{theorem}\label{t:interceptconjug}
Let $\xi:\{A,B,C\}^*\to\{A,B,C\}^*$ be a primitive substitution such that it has a fixed point ${\bf u}$.
Suppose that  ${\bf u} $ is a coding  of  an orbit of a point, say $\rho$, under a non-degenerate 3iet $T$  with parameters $\alpha$ and $\beta$. Let $\lambda$, $\eta$, $\eta_L$ and $w$ be as above.
We have
\[
\rho = \rho_L + \frac1{1-\lambda}(1-\alpha,1-\alpha-\beta,-\beta)\left(\begin{smallmatrix}|w|_A\\|w|_B\\|w|_C\end{smallmatrix}\right)\,,
\]
where
$\rho_L=\alpha$ if $\eta_L(A)$ starts with $A$  and $\rho_L=\beta$ if $\eta_L(A)$ starts with $B$.
\end{theorem}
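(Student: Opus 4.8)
The plan is to compare the homothety attached to $\eta$ by Theorem~\ref{thm:invhom} with the one attached to its leftmost conjugate $\eta_L$, the discrepancy between the two being governed entirely by $w$. First I would apply Theorem~\ref{thm:invhom} to the substitution $\eta$ assigned to $\xi$ in the preamble: it produces an interval $I=[\gamma,\delta)$ with $\mathit{It}_I=\{\eta(A),\eta(B),\eta(C)\}$, with $T_I$ homothetic to $T$ and $\rho$ the homothety center. By the comments following that theorem, the scaling factor of the homothety is $\lambda=\delta-\gamma$, this is the eigenvalue of the incidence matrix of $\eta$ lying in $(0,1)$, and the left endpoint of $I$ satisfies $\gamma=(1-\lambda)\rho$. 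Since ${\bf u}$ is aperiodic, the leftmost conjugate $\eta_L$ exists (as noted after Definition~\ref{d:conjugmorf}), and $w$ is the word with $\eta(a)w=w\eta_L(a)$ for all $a$; write $n=|w|$.

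Next I would feed $\eta'=\eta_L$ into Proposition~\ref{p:conjug}. It yields that $\eta_L$ fixes ${\bf u}_{\rho_L}$, that the interval attached to $\eta_L$ is $T^{n}(I)$, and, decisively, that $(1-\lambda)\rho_L=T^{n}\big((1-\lambda)\rho\big)=T^{n}(\gamma)$. The single computational point is then to evaluate $T^{n}(\gamma)$ explicitly. By Remark~\ref{pozn:intercept}, for $0\le k<n$ each $T^{k}(I)$ is an interval and the coding of every point of $I$ — in particular of $\gamma$ — begins with $w$; so if $w=w_0w_1\cdots w_{n-1}$ then $T^{k}(\gamma)\in J_{w_k}$ for $0\le k<n$. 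As by~\eqref{eq:3IET} the map $T$ acts on $J_A$, $J_B$, $J_C$ as translation by $1-\alpha$, $1-\alpha-\beta$, $-\beta$ respectively, the orbit telescopes to
\[
T^{n}(\gamma)-\gamma=(1-\alpha,\,1-\alpha-\beta,\,-\beta)\left(\begin{smallmatrix}|w|_A\\|w|_B\\|w|_C\end{smallmatrix}\right).
\]
Dividing the relation $(1-\lambda)\rho_L-(1-\lambda)\rho=T^{n}(\gamma)-\gamma$ by $1-\lambda$ and isolating $\rho$ gives the stated identity with $\rho_L$ playing the role of the base point.

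It then remains to identify $\rho_L$. The morphism $\eta_L$ is itself a primitive substitution fixing a non-degenerate 3iet word (namely ${\bf u}_{\rho_L}$), so Lemma~\ref{l:sedispravne} applies to it and forces the triple $\big({\rm Fst}(\eta_L(A)),{\rm Fst}(\eta_L(B)),{\rm Fst}(\eta_L(C))\big)$ to be $(A,B,B)$ or $(B,B,C)$. In the two cases Corollary~\ref{c:intercepty}, applied to $\eta_L$, gives $\rho_L=\alpha$ and $\rho_L=\beta$ respectively; since these triples differ exactly in their first letter, this is precisely the alternative ``$\eta_L(A)$ starts with $A$'' versus ``$\eta_L(A)$ starts with $B$'' recorded in the statement.

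The argument is a synthesis of earlier results, and the only delicate part is bookkeeping: one must check that $\eta_L$ really does fall under the scope of Theorem~\ref{thm:invhom} (so that Lemma~\ref{l:sedispravne} and Corollary~\ref{c:intercepty} may legitimately be invoked for it), that Proposition~\ref{p:conjug} is applied with $\eta_L$ in the role of the left conjugate matching the convention $\eta(a)w=w\eta_L(a)$, and that the normalization ``left endpoint $=(1-\lambda)\cdot\text{intercept}$'' is used simultaneously for the interval $I$ of $\eta$ and the interval $T^{n}(I)$ of $\eta_L$; this is exactly what lets the single relation $(1-\lambda)\rho_L=T^{n}\big((1-\lambda)\rho\big)$ be unwound into a closed form depending only on $|w|_A$, $|w|_B$, $|w|_C$.
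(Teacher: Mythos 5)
Your argument is precisely the paper's: Theorem~\ref{thm:invhom} produces the interval $I$ with left endpoint $\gamma=(1-\lambda)\rho$, Proposition~\ref{p:conjug} applied to the left conjugate $\eta_L$ gives $(1-\lambda)\rho_L=T^{n}\bigl((1-\lambda)\rho\bigr)$, Remark~\ref{pozn:intercept} lets you telescope $T^{n}(\gamma)-\gamma$ into $(1-\alpha)|w|_A+(1-\alpha-\beta)|w|_B-\beta|w|_C$, and Lemma~\ref{l:sedispravne} with Corollary~\ref{c:intercepty} identifies $\rho_L\in\{\alpha,\beta\}$ according to the first letter of $\eta_L(A)$. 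All of these steps are carried out correctly and in the same order as in the paper's proof.

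One caveat, which you share with the paper: the relation you correctly derive, namely $(1-\lambda)\rho_L-(1-\lambda)\rho=(1-\alpha)|w|_A+(1-\alpha-\beta)|w|_B-\beta|w|_C$, isolates to $\rho=\rho_L-\frac{1}{1-\lambda}\bigl((1-\alpha)|w|_A+(1-\alpha-\beta)|w|_B-\beta|w|_C\bigr)$, i.e.\ with a \emph{minus} sign, whereas the statement (and your last sentence, which claims that isolating $\rho$ ``gives the stated identity'') has a plus sign. The paper's own proof contains the identical slip: its final displayed equation swaps $\rho$ and $\rho_L$ relative to the two facts it has just established. The minus sign is the correct one; this can be checked on the paper's running example, where $\eta(A)=BCACAC$, $\eta(B)=BCACBBCAC$, $\eta(C)=BCAC$, $w=BCAC$, $\lambda=3-2\sqrt{2}$, $\alpha=(2\sqrt{2}-1)/7$, $\beta=(4\sqrt{2}-2)/7$, $\rho_L=\alpha$: the fixed point of $\eta$ begins with $B$ and has intercept $\rho=(4-\sqrt{2})/7\approx 0.369\in J_B$, which equals $\alpha$ \emph{minus} the correction term, while $\alpha$ \emph{plus} the correction term is $(5\sqrt{2}-6)/7\approx 0.153\in J_A$ and cannot be the intercept of a word beginning with $B$. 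So your derivation is sound and matches the paper's; only the concluding assertion that it yields the formula as printed is inaccurate, the formula itself needing its sign corrected.
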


\pfz
According to Lemma~\ref{l:sedispravne}, $\eta_L(A)$ starts in $A$ or $B$. Denote by $\rho_L$ the intercept of the 3iet word fixed by $\eta_L$. By Corollary~\ref{c:intercepty},
$\rho_L$ is equal to $\alpha$ if $\eta_L(A)$ starts in $A$ and it is equal to $\beta$ if $\eta_L(A)$ starts in $B$.
We can use Proposition~\ref{p:conjug} to derive
\[
(1-\lambda)\rho_L = T^{n}\big((1-\lambda)\rho\big)\,,\quad\text{ where } n=|w|.
\]
The definition of the transformation $T$ implies  the following observation:
If $w$ of lenght $n$  is a prefix of  ${\bf u}_{x}$, then   ${\bf u}_{x}=w{\bf u}_{T^n(x)}$  and
\[
T^n(x)=x+(1-\alpha,1-\alpha-\beta,-\beta)\left(\begin{smallmatrix}|w|_A\\|w|_B\\|w|_C\end{smallmatrix}\right)\,.
\]
Combining these two facts with Remark~\ref{pozn:intercept}, we get
\[
(1-\lambda)\rho = (1-\lambda)\rho_L + (1-\alpha,1-\alpha-\beta,-\beta)\left(\begin{smallmatrix}|w|_A\\|w|_B\\|w|_C\end{smallmatrix}\right)\,.
\]
The statement follows.
\pfk

\section{Applications}\label{sec:HKS}
\subsection{Class $P$ conjecture for non-degenerate 3iet}

This subsection is devoted to a question coming from another field, namely mathematical physics, where notions from combinatorics on words appear naturally in the study of the spectra of Schr\"{o}dinger operators associated to infinite sequences.
The question is stated in an article of Hof, Knill and Simon \cite{HoKnSi} and concerns infinite sequences generated by a substitution over a finite alphabet.
The authors show in their paper that if a sequence contains infinitely many palindromic factors (such sequences are called \textit{palindromic}), then the associated operator has a purely singular continuous spectrum.
In the same paper, the following  class of substitutions is defined.

\begin{definition}
Let $\varphi$ be a substitution over an alphabet $\A$. We say that $\varphi$ belongs to the \textit{class $P$} if there exists a palindrome
$p$ such that for every $a\in\A$ one has $\varphi(a)=pp_a$ where $p_a$ is a palindrome.
We say that $\varphi$ is of \textit{class $P'$} if it is conjugate to some morphism in class $P$.
\end{definition}

Hof, Knill and Simon  ask the following question: ``Are there
(minimal) sequences containing arbitrarily long palindromes that
arise from substitutions none of which belongs to class $P$?'' A
discussion on how to transform this question into a mathematical
formalism can be found in \cite{LaPe14}.

The first result concerning class $P$ was given by Tan in
\cite{BoTan}. The author extended class $P$ by morphisms conjugated
to the elements of class $P$, since it is well-known that fixed
points of conjugated morphisms have the same set of factors. This
extended class is denoted by $P'$.

The conjecture, stemming from the question of Hof, Knill and Simon, states that every pure morphic (uniformly recurrent) palindromic sequence is a fixed point of a morphism of class $P'$.
It is referred to as `class $P$ conjecture'.

In~\cite{BoTan}, it is shown that
if a fixed point of a primitive substitution $\varphi$ over a binary
alphabet is palindromic, then
 the substitution $\varphi$ or $\varphi^2$ belongs to class $P'$.
In \cite{La2013}, Labb\'e shows that the assumption of a binary alphabet in the theorem of Tan is essential.
He shows that the fixed point of the substitution
\[
A\mapsto ABA,\ B\mapsto C,\ C\mapsto BAC,
\]
is palindromic. The substitution clearly does not belong to class $P'$. Moreover, no other substitution fixing the same infinite word belongs to class $P'$. Is is easy to see that Labb\'e's substitution fixes a degenerate 3iet word, namely 3iet word coding
  the orbit of $\rho=\frac{2-\sqrt{2}}{4}$ under the 3iet with parameters $\alpha =\frac12$ and $\beta=\frac{3-\sqrt{2}}{2}$.

We show that a ternary analogue of the theorem of Tan holds in the
context of codings of non-degenerate 3iet with the permutation $(321)$. The following
lemma is a generalization of a result obtained for binary alphabets
by Tan~\cite{BoTan}, also shown in \cite{LaPe14}. We provide a
different proof.

\begin{proposition}\label{p:P'}
Let $\varphi: \A \to \A$ be a non-erasing morphism. The morphism $\varphi$ is conjugate to $\overline{\varphi}$ if and only if $\varphi$ is of class $P'$.
\end{proposition}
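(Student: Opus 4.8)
The plan is to argue entirely inside the conjugacy calculus of Definition~\ref{d:conjugmorf}, the one essential device being that the mirror operation reverses conjugation: applying $\overline{(\cdot)}$ to an identity $w\varphi(a)=\psi(a)w$ gives $\overline{w}\,\overline{\psi}(a)=\overline{\varphi}(a)\,\overline{w}$, so that $\varphi\vartriangleleft\psi$ holds if and only if $\overline{\psi}\vartriangleleft\overline{\varphi}$. I will also use that the morphisms conjugate to a given non-erasing $\varphi$ form, under $\vartriangleleft$, a finite chain $\varphi_L\vartriangleleft\cdots\vartriangleleft\varphi_R$ from the leftmost to the rightmost conjugate; the degenerate configurations, in which $\varphi_L$ or $\varphi_R$ fails to exist (the $\vartriangleleft$-structure being cyclic, which forces a periodic fixed point), will be treated separately, since there each $\varphi(a)$ can be taken to be a palindrome directly. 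Combining these facts, $\overline{(\cdot)}$ maps the conjugacy class of $\varphi$ onto the conjugacy class of $\overline{\varphi}$ reversing the order of its chain, so whenever $\overline{\varphi}$ lies in the class of $\varphi$ one gets $\overline{\varphi_L}=\varphi_R$ and $\overline{\varphi_R}=\varphi_L$.

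For the implication ``class $P'$ $\Rightarrow$ $\varphi$ conjugate to $\overline{\varphi}$'', I would take $\psi$ of class $P$ conjugate to $\varphi$, say $\psi(a)=p\,p_a$ with $p$ and all $p_a$ palindromes. Then $\overline{\psi}(a)=\overline{p_a}\,\overline{p}=p_a p$, and with the word $p$ one checks $p\,\overline{\psi}(a)=p\,p_a\,p=\psi(a)\,p$ for every letter $a$; hence $\overline{\psi}\vartriangleleft\psi$, so $\psi$ and $\overline{\psi}$ lie in one conjugacy class. Mirroring the conjugacy between $\varphi$ and $\psi$ shows $\overline{\varphi}$ and $\overline{\psi}$ are conjugate as well. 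Therefore $\varphi$, $\psi$, $\overline{\psi}$ and $\overline{\varphi}$ all lie in the same conjugacy class; since that class is a chain, any two of its members are $\vartriangleleft$-comparable, so $\varphi$ is conjugate to $\overline{\varphi}$.

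For the converse, assume $\varphi$ is conjugate to $\overline{\varphi}$, so $\overline{\varphi}$ lies in the class of $\varphi$ and hence $\varphi_R=\overline{\varphi_L}$. Let $w$ be the word with $w\,\varphi_L(a)=\varphi_R(a)\,w=\overline{\varphi_L(a)}\,w$ for all $a$. In the main case $|w|\le|\varphi_L(a)|$ for every $a$: comparing the suffixes of length $|w|$ of the two sides shows $w$ is a suffix of $\varphi_L(a)$, so $\varphi_L(a)=s_a w$; substituting yields $w\,s_a\,w=\overline{w}\,\overline{s_a}\,w$, and comparing prefixes of length $|w|$ gives first $w=\overline{w}$ and then $s_a=\overline{s_a}$. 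Consequently $\varphi_R(a)=\overline{\varphi_L(a)}=w\,s_a$ with $w$ and every $s_a$ palindromic, i.e.\ $\varphi_R$ is of class $P$; since $\varphi$ is conjugate to $\varphi_R$, it is of class $P'$. The complementary case $|w|>|\varphi_L(a)|$ for some $a$ forces $w$ and all images $\varphi_L(b)$ to be powers of a single letter, which is again the degenerate situation, to be dispatched by hand.

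I expect the only real obstacle to be the length bookkeeping in the converse — isolating the case $|w|>|\varphi_L(a)|$ and checking that it collapses to the periodic degeneracy — together with pinning down, in the full generality of an arbitrary non-erasing morphism, the exact scope of the statement ``the conjugacy class is a finite chain''; the two explicit computations (the reversal property of the mirror map, and the prefix/suffix matching) are routine.
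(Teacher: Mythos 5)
Your forward direction is essentially the paper's argument and is fine (both you and the paper lean on the fact that all morphisms reachable from $\varphi$ by conjugation form a single $\vartriangleleft$-chain, so that the various one-step conjugations can be concatenated). The converse, however, has a genuine gap. You reduce to the extreme pair $(\varphi_L,\varphi_R)$ and the word $w$ with $w\varphi_L(a)=\varphi_R(a)w=\overline{\varphi_L(a)}\,w$, run the suffix/prefix comparison under the hypothesis $|w|\le|\varphi_L(a)|$, and dismiss the complementary case as degenerate. But $w$ is here the \emph{longest} conjugating word in the entire chain (its length is the number of conjugates minus one), so $|w|>|\varphi_L(a)|$ for some letter $a$ is the typical situation, not a degenerate one, and it does not force anything to be a power of a single letter. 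Concretely, take $\xi_1\colon 0\mapsto 010,\ 1\mapsto 10$. Its conjugacy chain is $\xi_1\vartriangleleft\xi_2\vartriangleleft\xi_3\vartriangleleft\xi_4$ with $\xi_2\colon 0\mapsto 001,\ 1\mapsto 01$, $\xi_3\colon 0\mapsto 100,\ 1\mapsto 10$, $\xi_4\colon 0\mapsto 010,\ 1\mapsto 01$; one checks $\overline{\xi_1}=\xi_4$, so the hypothesis of the proposition is satisfied, the conjugating word is $w=010$, and $|w|=3>2=|\xi_1(1)|$ while neither $w$ nor the images are powers of a letter. Moreover your intended conclusion fails there: $\varphi_R=\xi_4$ is \emph{not} of class $P$ (no palindromic common prefix $p$ works simultaneously for $010$ and $01$); the class-$P$ representative is the \emph{middle} conjugate $\xi_3$, with $p=1$, $p_0=00$, $p_1=0$.

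This is precisely the point the paper's proof is built around: it works with the given $\varphi$ and its mirror directly, first establishes (quoting a lemma of Blondin Mass\'e et al.) that the conjugating word $w$ is a palindrome, writes $w=uc\overline{u}$ with $c$ a letter or empty, and then conjugates by $u$ so as to land at the \emph{centre} of the chain, where $c^{-1}u^{-1}\varphi(a)u$ is shown to be a palindrome and $u^{-1}\varphi(a)u=c\,p_a$ exhibits the class-$P$ form. Your local prefix/suffix computation is the right kind of argument, but it must be performed after recentering at the middle of the palindromic conjugating word, not at the endpoints of the chain; as it stands, the case split on which your converse rests excludes most of the morphisms the proposition is meant to cover (including those arising in the paper's application, where the relevant conjugation is exactly the one between $\eta_L$ and $\eta_R$). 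The separate issue you flag about the possible non-existence of $\varphi_L,\varphi_R$ is real but minor; the paper's route avoids it altogether by never invoking the chain in the converse.
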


\pfz
$(\Leftarrow)$:
Since $\varphi$ is of class $P'$, there exists a morphism $\varphi'$ of class $P$ which is conjugate to $\varphi$, i.e., there exists a word $w$ such that
$w\varphi(a) = \varphi'(a)w$ or $\varphi(a)w = w\varphi'(a)$ for every letter $a$.

We can suppose that $w\varphi(a) = \varphi'(a)w$ for every letter $a$ as the other case is analogous.
It implies $\varphi(a)  = w^{-1}pp_aw$ for some palindromes $p_a$ and $p$.
Thus, $\overline{\varphi(a)} = \overline{w} p_a p (\overline{w})^{-1}$ for every letter $a$.
In other words, the morphism $\overline{\varphi}$ is conjugate to $\overline{\varphi'}$.
Since $\overline{\varphi'}$ is clearly conjugate to $\varphi$, we conclude that $\varphi$ is conjugate to $\overline{\varphi}$.

$(\Rightarrow)$:
Since $\varphi$ is conjugate to $\overline{\varphi}$, there exists a word $w \in \B^*$ such that for every $a \in \A$, we have
\[
 \varphi(a)w = w\overline{\varphi(a)} \quad \text{  or  } \quad w\varphi(a) = \overline{\varphi(a)}w\,.
\]
Suppose first that $\varphi(a)w = w\overline{\varphi(a)}$ holds.
By Lemma~1 in~\cite{BlBrLa}, this implies that $w$ is a palindrome.
Let $u \in \A^*$ and $c \in \{\varepsilon\} \cup \A$ be such that $w = uc\overline{u}$.
We can thus write
\[
\varphi(a)uc\overline{u} = uc\overline{u}\overline{\varphi(a)}\,.
\]
By applying $(uc)^{-1}$ from the left and $(c\overline{u})^{-1}$ from the right, we obtain for any $a\in\A$
\[
c^{-1}u^{-1}\varphi(a)u = \overline{u}\overline{\varphi(a)}\overline{u}^{-1}c^{-1} = \overline{c^{-1}u^{-1}\varphi(a)u}\,.
\]
This means that the word $p_a:= c^{-1}u^{-1}\varphi(a)u$ is a palindrome.
Set $p:=c$.
Denote by $\varphi'$ the morphism defined for all $a \in \A$ by $\varphi'(a)=pp_a = u^{-1}\varphi(a)u$.
Obviously, $\varphi$ is conjugate to $\varphi'$ which is of class $P$. Therefore $\varphi\in P'$.

The case $w\varphi(a) = \overline{\varphi(a)}w$ is analogous.
\pfk

We are now in position to complete the proof the main theorem.

\begin{theorem}\label{thm:hks}
If $\xi$ is a primitive substitution fixing a non-degenerate 3iet word, then $\xi$ or $\xi^2$ belongs to class $P'$.
\end{theorem}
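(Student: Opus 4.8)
The strategy is to reduce the theorem to Proposition~\ref{p:P'}, which characterises class $P'$ as the non-erasing morphisms conjugate to their own mirror image. Thus it suffices to prove that $\xi$ or $\xi^2$ is conjugate to its mirror.

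First I would fix notation: let $\eta\in\{\xi,\xi^2\}$ be the substitution attached to $\xi$ by Theorem~\ref{thm:invhom}, so that $\eta$ is primitive, fixes the non-degenerate 3iet word ${\bf u}_\rho$, and corresponds to an interval $I$ of length $\lambda$ with $T_I$ homothetic to $T$. Since $\overline{\xi^2}=(\overline{\xi})^2$, it is enough to prove that $\eta$ is conjugate to $\overline{\eta}$: by Proposition~\ref{p:P'} this yields $\eta\in P'$, hence $\xi\in P'$ if $\eta=\xi$ and $\xi^2\in P'$ if $\eta=\xi^2$, which is exactly the claim. Note that, by Proposition~\ref{p:mirror}, $\overline{\eta}$ is itself the Theorem~\ref{thm:invhom} substitution associated with the interval $\overline{I}$, it fixes ${\bf u}_{1-\rho}$, and it has the same incidence matrix as $\eta$, hence the same eigenvalue $\lambda\in(0,1)$.

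The heart of the proof is to produce a morphism that is simultaneously a conjugate of $\eta$ and of $\overline{\eta}$. I would work with $\eta_L$, the leftmost conjugate of $\eta$, and $\eta_R$, its rightmost conjugate; both exist because ${\bf u}_\rho$ is aperiodic. By Lemma~\ref{l:sedispravne}, the triple $({\rm Fst}(\eta_L(A)),{\rm Fst}(\eta_L(B)),{\rm Fst}(\eta_L(C)))$ coincides with $({\rm Lst}(\eta_R(A)),{\rm Lst}(\eta_R(B)),{\rm Lst}(\eta_R(C)))$, and this common triple $\tau$ is either $(A,B,B)$ or $(B,B,C)$. Passing to mirrors, ${\rm Fst}(\overline{\eta_R}(X))={\rm Lst}(\eta_R(X))$, so the first-letter triple of $\overline{\eta_R}$ is again $\tau$. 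Now $\eta_L$ is, by Proposition~\ref{p:conjug}, the substitution attached by Theorem~\ref{thm:invhom} to an interval $I_L$, and $\overline{\eta_R}$ is, by Proposition~\ref{p:mirror} applied to the interval $I_R$ of $\eta_R$, the substitution attached by Theorem~\ref{thm:invhom} to $\overline{I_R}$; both are primitive, with the same incidence matrix and eigenvalue $\lambda$ as $\eta$. By Corollary~\ref{c:intercepty}, the common first-letter triple $\tau$ forces $\eta_L$ and $\overline{\eta_R}$ to have the same intercept, namely $\alpha$ if $\tau=(A,B,B)$ and $\beta$ if $\tau=(B,B,C)$. Since the left endpoint of the interval of a Theorem~\ref{thm:invhom} substitution equals $(1-\lambda)$ times its intercept (as noted after Theorem~\ref{thm:invhom}) and its length is $\lambda$, the intervals $I_L$ and $\overline{I_R}$ are the very same interval; as such an interval determines the attached substitution through its itineraries, we obtain $\eta_L=\overline{\eta_R}$.

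Finally, $\eta_L$ is a conjugate of $\eta$, and $\overline{\eta_R}$ is a conjugate of $\overline{\eta}$, since mirroring a conjugacy relation $w\theta(a)=\theta'(a)w$ turns it into $\overline{\theta(a)}\,\overline{w}=\overline{w}\,\overline{\theta'(a)}$. As $\eta_L=\overline{\eta_R}$, the morphisms $\eta$ and $\overline{\eta}$ lie in the same conjugacy class, hence are conjugate; by Proposition~\ref{p:P'}, $\eta\in P'$, and the theorem follows. The step I expect to be the main obstacle is the identification $\eta_L=\overline{\eta_R}$: it combines Lemma~\ref{l:sedispravne} (shape of the extremal conjugates), Corollary~\ref{c:intercepty} (turning that shape into an exact intercept), Proposition~\ref{p:mirror} (locating the mirror interval), and the fact that a homothety interval pins down its substitution; the rest of the argument is essentially bookkeeping once these ingredients are in place.
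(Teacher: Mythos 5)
Your proposal is correct and follows essentially the same route as the paper: reduce to Proposition~\ref{p:P'}, use Lemma~\ref{l:sedispravne} and Corollary~\ref{c:intercepty} to show $\eta_L$ and $\overline{\eta_R}$ share the same intercept, deduce $I_L=\overline{I_R}$ from equal length and common homothety center, and conclude $\eta_L=\overline{\eta_R}$ so that $\eta$ is conjugate to its mirror. The only cosmetic difference is that the paper applies Proposition~\ref{p:P'} to $\eta_R$ rather than to $\eta$ itself; both versions rely on the same chain-transitivity of conjugation.
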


\pfz
Denote by $\eta\in\{\xi,\xi^2\}$ the substitution from Theorem~\ref{thm:invhom}.
There exist intervals $I_L$ and $I_R\subset[0,1)$ such that $\eta_L(A),\eta_L(B)$ and $\eta_L(C)$ are the $I_L$-itineraries,
$\eta_R(A),\eta_R(B)$ and $\eta_R(C)$ are the $I_R$-itineraries, and such that $T_{I_L}$ and $T_{I_R}$ are 3iets homothetic to $T$.

Lemma~\ref{l:sedispravne} implies that
\[
\Big({\rm Fst}\big(\eta_L(A)\big),{\rm Fst}\big(\eta_L(B)\big),{\rm Fst}\big(\eta_L(C)\big)\Big) \! \! = \! \! \Big({\rm Lst}\big(\eta_R(A)\big),{\rm Lst}\big(\eta_R(B)\big),{\rm Lst}\big(\eta_R(C)\big)\Big)
\]
and this triple of letters equals $(A,B,B)$ or $(B,B,C)$.
Suppose it is equal to $(A,B,B)$.
Note that by Corollary~\ref{c:intercepty}, $\eta_L$ fixes the infinite word ${\bf u}_\alpha$.

According to Proposition~\ref{p:mirror}, the induced transformation
$T_{\overline{I_R}}$ is again homothetic to $T$ and the
corresponding substitution is $\overline{\eta_R}$. Since it is the
mirror substitution to $\eta_R$, we have $\Big({\rm
Fst}\big(\overline{\eta_R}(A)\big),{\rm
Fst}\big(\overline{\eta_R}(B)\big),{\rm
Fst}\big(\overline{\eta_R}(C)\big)\Big)=(A,B,B)$. By
Corollary~\ref{c:intercepty}, the substitution $\overline{\eta_R}$
also fixes the infinite word ${\bf u}_\alpha$. Since the intervals
$I_L$ and $\overline{I_R}$ are of the same length and are homothetic
to the interval $[0,1)$ with the same homothety center $\alpha$,
necessarily $I_L=\overline{I_R}$ and thus
$\overline{\eta_R}=\eta_L$. Consequently, $\eta_R$ is conjugate to
its mirror image. We apply Proposition~\ref{p:P'} to finish the
proof.

In case that $\left({\rm Fst}\big(\eta_L(A)\big),{\rm
Fst}\big(\eta_L(B)\big),{\rm Fst}\big(\eta_L(C)\big)\right) =
(B,B,C)$, we proceed in a similar way. In this case, the center of
the homothety of the intervals $I_L = \overline{I_R}$ and $[0,1)$ is
$\beta$. \pfk

Let us mention that another analogue of Tan's result is already known for marked morphisms.
Recall that a substitution $\xi$ over an alphabet $\A$ is called \textit{marked} if its leftmost conjugate $\xi_L$ and its rightmost conjugate $\xi_R$ satisfy
\[
{\rm Fst}\big(\xi_L(a)\big)\neq {\rm Fst}\big(\xi_L(b)\big) \quad \text{ and } \quad {\rm Lst}\big(\xi_R(a)\big)\neq {\rm Lst}\big(\xi_R(b)\big)
\]
for distinct $a,b\in\A$. It can be shown that if $\xi$ is marked,
then all its powers are marked. In \cite{LaPe14}, it is shown that
for a marked morphism $\xi$ with fixed point ${\bf u}$ having
infinitely many palindromes, some power $\xi^k$ belongs to class
$P'$.

Our Lemma~\ref{l:sedispravne} shows that a substitution fixing a non-degenerated 3iet word cannot be marked. Theorem~\ref{thm:hks} thus
provides a new family of substitutions satisfying class $P$ conjecture.

\subsection{Properties of 3iet preserving substitutions}

A morphism which maps a 3iet word
to a 3iet word is called 3iet preserving. Morphisms which preserve
2iet words, i.e. Sturmian words, are called Sturmian and they have been
extensively studied for many years. Sturmian morphisms form a
monoid which is generated by three morphisms only \cite{MiSeSturmianMorphisms}. In
contrary to Sturmian morphisms, the class of 3iet preserving
morphisms is not completely described. Only partial results are
known. For example, the monoid of  3iet preserving morphisms  is
not finitely generated  \cite{AmHaPe}  and contains the ternarizations  we
defined in Section~\ref{sec:ternarizace}, see
\cite{AmMaPe}. Our  previous considerations lead to some comments on
properties of 3iet preserving substitution.

\begin{itemize}
\item
Our results of Sections~\ref{sec:3} and~\ref{sec:substitutions} allow us to say more about the structure of substitutions fixing 3iet words.

\begin{corollary} \label{c:strukturamorfizmu}
Let $\eta$ be a primitive substitution of Theorem~\ref{thm:invhom} fixing a non-degenerate 3iet word over the alphabet $\{A,B,C\}$. We have
\[
\eta(B)=\omega_{AC\to B}\big(\eta(AC)\big)=\omega_{CA\to B}\big(\eta(CA)\big)
\quad\text{or}\quad
\eta(B)=\omega_{B\to CA}\big(\eta(AC)\big)=\omega_{B\to AC}\big(\eta(CA)\big)\,.
\]
\end{corollary}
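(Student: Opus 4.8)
The plan is to read the identity directly off Proposition~\ref{p:3itinerare}, after pinning down which of its six cases a substitution can realise.

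First I would apply Theorem~\ref{thm:invhom}: the substitution $\eta$ comes equipped with an interval $I=[\gamma,\delta)\subset[0,1)$ with $\mathit{It}_I=\{R_A,R_B,R_C\}$, where $R_Y$ is the $I$-itinerary of the subinterval $I_Y=\Phi(J_Y)$, $\eta(Y)=R_Y$ for $Y\in\{A,B,C\}$, and $T_I$ is homothetic to $T$. Since (as recalled in Section~\ref{sec:substitutions}) the homothety is $\Phi(x)=\lambda x+\gamma$ with $\lambda=\delta-\gamma\in(0,1)$, it is an increasing affine map, so $I_A<I_B<I_C$; hence $\eta(A),\eta(B),\eta(C)$ are, in this order, the itineraries of the leftmost, middle and rightmost of the three subintervals of $I$. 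As $\#\mathit{It}_I=3$, one of the six cases of Proposition~\ref{p:3itinerare} holds.

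Next I would discard cases (\ref{l:jakitiner_3rw_3}), (\ref{l:jakitiner_3rw_6}), (\ref{l:jakitiner_3rw_4}) and (\ref{l:jakitiner_3rw_5}) of Proposition~\ref{p:3itinerare}, i.e.\ those with $\c=\d$. For a non-degenerate 3iet, crossing the point $\a$ (resp.\ $\b$) merely reroutes an excursion of $T$ without changing the point where that excursion first returns to $I$, so $T_I$ is continuous at $\a$ and at $\b$ and its discontinuity points lie in $\{\c,\d\}$; this is the mechanism underlying items (a)--(d) of Lemma~\ref{l:jakitiner} and is used in the proof of Lemma~\ref{l:sedispravne}. (In more detail, the forward orbits of $T(\alpha)$ and of $T^2(\beta)$ avoid the discontinuities $\alpha,\beta$ thanks to the i.d.o.c.\ property, which is exactly what keeps $T_I$ continuous across $\a$ and $\b$.) Consequently, if $\c=\d$ then $T_I$ has at most one discontinuity point, so by Proposition~\ref{p:bamape3iet} it is a 2iet with permutation $(21)$, contradicting that it is homothetic to the 3iet $T$. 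Hence only cases (\ref{l:jakitiner_3rw_1}) and (\ref{l:jakitiner_3rw_2}) remain, and in both of them the leftmost itinerary is $R_1$ and the rightmost is $R_2$; thus $\eta(A)=R_1$, $\eta(C)=R_2$, and therefore $\eta(AC)=R_1R_2$ and $\eta(CA)=R_2R_1$.

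Finally I would substitute these into the formula for the middle itinerary furnished by Proposition~\ref{p:3itinerare}. In case (\ref{l:jakitiner_3rw_2}) it reads $\eta(B)=\omega_{AC\to B}(R_1R_2)=\omega_{CA\to B}(R_2R_1)$, that is $\eta(B)=\omega_{AC\to B}(\eta(AC))=\omega_{CA\to B}(\eta(CA))$; in case (\ref{l:jakitiner_3rw_1}) it reads $\eta(B)=\omega_{B\to CA}(R_1R_2)=\omega_{B\to AC}(R_2R_1)$, that is $\eta(B)=\omega_{B\to CA}(\eta(AC))=\omega_{B\to AC}(\eta(CA))$, which are precisely the two alternatives in the statement. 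The step I expect to require the most care is the exclusion of the four cases with $\c=\d$: what must be made watertight there is that for a non-degenerate 3iet the discontinuities of a first return map $T_I$ are exactly $\c$ and $\d$ (equivalently, that crossing $\a$ or $\b$ introduces no discontinuity of $T_I$). I would record this as a short separate remark, or else extract it from the excursion analysis already carried out in the proof of Lemma~\ref{l:jakitiner}.
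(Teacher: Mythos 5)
Your proof is correct and follows essentially the same route as the paper: invoke Theorem~\ref{thm:invhom} to obtain the interval $I$ with $T_I$ homothetic to $T$, rule out the cases of Proposition~\ref{p:3itinerare} with $\c=\d$ because non-degeneracy of $T_I$ (equivalently, homothety to the genuine 3iet $T$) forces $\c\neq\d$, and read the two identities off cases (\ref{l:jakitiner_3rw_1}) and (\ref{l:jakitiner_3rw_2}). The paper's proof is just a terser version of the same argument, leaving implicit the continuity of $T_I$ across $\a$ and $\b$ that you spell out.
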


\pfz By Theorem~\ref{thm:invhom}, $\eta$ corresponds to an interval
$I$ such that $T_I$ is homothetic to $T$. Since $T$ is
non-degenerate, also  $T_I$ is non-degenerate, and therefore its
discontinuity points $\c,\d$ are distinct. By
Proposition~\ref{p:3itinerare}, the three $I$-itineraries are of the
form given by cases (i) or (ii). \pfk

\begin{example}
We can illustrate the above corollary on substitutions from Example~\ref{ex:ternarizacesubst}.
We have
\[
\begin{aligned}
\varphi(0)&=0110101\\
\varphi(1)&=01101
\end{aligned}\,,
\qquad
\begin{aligned}
\psi(0)&=1010101\\
\psi(1)&=10101
\end{aligned}\,,
\]
and
\[
\eta(A)=BCACAC,\quad \eta(B)=BCACBBCAC,\quad \eta(C)=BCAC.
\]
We can check that $\eta$ satisfies the property given in Corollary~\ref{c:strukturamorfizmu}, namely that
\[
\begin{aligned}
\eta(B)=BCACBBCAC&=\omega_{AC\to B}\big(\eta(AC)\big)=\omega_{AC\to B}\big(BCACACBCAC\big) = \\
&= \omega_{CA\to B}\big(\eta(CA)\big) = \omega_{CA\to B}\big(BCACBCACAC\big)\,.
\end{aligned}
\]
\end{example}

The above corollary implies a relation of numbers of occurrences of letters in letter images of $\eta$ which may be used to get the following relation:
\[
M_\eta\left(\begin{smallmatrix}~1\\-1\\~1\end{smallmatrix}\right) =
 \left(\begin{smallmatrix}|\eta(A)|_A\\|\eta(A)|_B\\|\eta(A)|_C\\\end{smallmatrix}\right)
-
\left(\begin{smallmatrix}|\eta(B)|_A\\|\eta(B)|_B\\|\eta(B)|_C\end{smallmatrix}\right)
+
\left(\begin{smallmatrix}|\eta(C)|_A\\|\eta(C)|_B\\|\eta(C)|_C\end{smallmatrix}\right)
=  \pm
\left(\begin{smallmatrix}~1\\-1\\~1\end{smallmatrix}\right)\,.
\]
Thus, $(1,-1,1)^\top$ is an eigenvector of the incidence matrix of $\eta$ corresponding to the eigenvalue $1$ and $-1$, respectively.
This fact has been already derived in~\cite{AmMaPeMatice} by other methods.

\item
One can ask whether a substitution $\eta$ fixing a 3iet word can be
in the same time the leftmost and rightmost conjugate of itself,
i.e. $\eta=\eta_L=\eta_R$. It can be easily seen that such a
situation never occurs for non-degenerate 3iets. Indeed, the proof
of Theorem~\ref{thm:hks} implies that for any primitive substitution
$\eta$ fixing a non-degenerate 3iet, $\eta_L$ and
$\overline{\eta_R}$ fix the same infinite word ${\bf u}_\rho$, where
$\rho\in\{\alpha,\beta\}$. If, moreover, $\eta_R=\overline{\eta_R}$,
then by Proposition~\ref{p:mirror} we have $1-\rho=\rho$. This
implies that $\rho=\frac12\in\{\alpha,\beta\}$. However, this cannot
happen for a non-degenerate 3iet $T$.

\item
It can be observed from Lemma~\ref{l:sedispravne} that given a
substitution $\eta$ fixing a 3iet word, its leftmost conjugate
$\eta_L$ has always two fixed points, namely either
$\lim_{n\to\infty}\eta_L^n(A)$ and $\lim_{n\to\infty}\eta_L^n(B)$,
or $\lim_{n\to\infty}\eta_L^n(B)$ and
$\lim_{n\to\infty}\eta_L^n(C)$. One can show that one of these fixed
points is a coding of the point $\rho=\alpha$ or $\beta$,
respectively, under a 3iet $T$. The other fixed point is a coding of
the same point, but under an exchange of three intervals defined
over $(0,1]$, where all the intervals are of the form
$(\cdot,\cdot]$.

\begin{example}
Consider the substitution $\eta$ from Example~\ref{ex:ternarizacesubst}. We have
\[
\eta_L(A)=ACBCAC,\quad \eta_L(B)=BBCACBCAC,\quad \eta_L(C)=BCAC.
\]
This substitution has two fixed points, namely
\[
\begin{aligned}
\lim_{n\to\infty}\eta_L^n(A) &= ACBCACBCACBBCACBCACBCACACBCACBCAC\cdots,\\
\lim_{n\to\infty}\eta_L^n(B) &= BBCACBCACBBCACBCACBCACACBCACBCAC\cdots.
\end{aligned}
\]
It can be verified that the two infinite words differ only by the
prefix $AC$ vs. $B$. The infinite word ${\bf u}_\alpha$, coding
$\alpha$ under the 3iet $T$ is equal to the fixed point
$\lim_{n\to\infty}\eta_L^n(B)$.
\end{example}

\end{itemize}

\section*{Acknowledgements}
\small Z.M. and E.P. acknowledge financial support by the Czech Science
Foundation grant GA\v CR 13-03538S, \v S.S. acknowledges financial support by the Czech Science
Foundation grant  GA\v CR 13-35273P.

\bibliographystyle{siam}
\IfFileExists{biblio.bib}{\bibliography{biblio}}{\bibliography{../!bibliography/biblio}}
\end{document}